\DeclareMathAlphabet{\mathpzc}{OT1}{pzc}{m}{it}
\newtheorem{thm}{Theorem}[section]
\newtheorem{lemma}[thm]{Lemma}
\newtheorem{prop}[thm]{Proposition}
\newtheorem{cor}[thm]{Corollary}
\theoremstyle{remark}
\newtheorem{rem}[thm]{Remark}
\theoremstyle{definition}
\newtheorem{defn}[thm]{Definition}
\newtheoremstyle{Claim}{}{}{\itshape}{}{\itshape\bfseries}{:}{ }{#1}
\theoremstyle{Claim}
\newcommand{\T}{{\mathbb{T}^d}}
\renewcommand{\H}{\mathcal{H}}
\newcommand{\R}{\mathbb{R}}
\newcommand\sP{\mathpzc{P}}
\newcommand\sQ{\mathpzc{Q}}
\newcommand{\eps}{\varepsilon}
\newcommand{\norm}[1]{\left\lVert#1\right\rVert}
\DeclareMathOperator{\dive}{div}
\def\ds{\displaystyle}
\title{Lipschitz regularity for viscous Hamilton-Jacobi \\ equations with $L^p$ terms}
\author{Marco Cirant and Alessandro Goffi}
\date{\today}
\begin{document}

\maketitle

\begin{abstract} We provide Lipschitz regularity for solutions to viscous time-dependent Hamilton-Jacobi equations with right-hand side belonging to Lebesgue spaces. Our approach is based on a duality method, and relies on the analysis of the regularity of the gradient of solutions to a dual (Fokker-Planck) equation. Here, the regularizing effect is due to the non-degenerate diffusion and coercivity of the Hamiltonian in the gradient variable.
\end{abstract}

\noindent
{\footnotesize \textbf{AMS-Subject Classification}}. {\footnotesize 35F21, 35K55, 35B65}\\
{\footnotesize \textbf{Keywords}}. {\footnotesize Hamilton-Jacobi equations with unbounded data, Lipschitz regularity, Kardar-Parisi-Zhang, adjoint method}
%



\section{Introduction}


We study the regularization effect of viscous non-degenerate Hamilton-Jacobi (briefly HJ) equations 
\begin{equation}\label{hjb}
\begin{cases}
\ds \partial_tu(x,t) - \sum_{i,j=1}^d a_{ij}(x,t) \partial_{ij}u(x,t) + H(x, Du(x,t)) = f(x,t) & \text{in $Q_T = \T \times (0, T)$,} \\
u(x, 0) = u_0(x) & \text{in $\T$,}
\end{cases}
\end{equation}
with unbounded right-hand side $f$, continuous initial datum $u_0$, and superlinear Hamiltonian of the model form
\begin{equation}\label{typh}
H(x,p) = h(x)|p|^{\gamma} + b(x) \cdot p, \qquad \gamma > 1, \quad0 < h_0 \le h(x), h, b \in C^1(\T).
\end{equation}
Our main aim is to show that {\it continuous} weak solutions that satisfy the integrability condition
\[
Du \in L^{\sP(\gamma-1)}_{\rm loc}(Q_T) \qquad \text{for $\sP \ge d+2$}
\]
(see in particular Definition \ref{wsol}, {\upshape (i)}) become immediately Lipschitz continuous at positive times. Moreover, we prove that such solutions exist and are unique.

Regularity of solutions to HJ equations has been the object of an extensive literature, and the study of Lipschitz properties has received a particular focus motivated by problems in control theory. The interest has been recently renewed in the study of Mean-Field Game systems, where HJ equations of the form \eqref{hjb} with {\it unbounded} terms $f$ naturally appear. In this setting, few results on Lipschitz regularity are available in the literature, mainly the works by D. Gomes and collaborators \cite{GomesStocEA, GPSMsup}. We also mention some recent results by P. Cardaliaguet, A. Porretta, D. Tonon, and P. Cardaliaguet L. Silvestre \cite{CPT, CSil} on Sobolev and H\"older regularity respectively.

 Note that, depending on the growth of $H$ with respect to the gradient variable, two main regimes are typically identified. If $H$ is sub-quadratic, i.e. $1 < \gamma \leq 2$, then the second order diffusion is the dominating term at small scales. On the other hand, in the super-quadratic case $\gamma > 2$ the diffusion term is considered ``weaker'', and thus typically regarded as a perturbation of a first-order HJ equation. This distinction can be observed heuristically via an $L^\infty$ scaling argument (see e.g. \cite[Section 2.1]{CSil}). A goal of this work is to combine the regularization effects of both the diffusion and the coercivity of the Hamiltonian, and to give a unified treatment of sub- and super-quadratic cases.
 
We give here a very brief review of different techniques that are available to obtain Lipschitz regularity for HJ equations with {\it bounded} (or more regular) $f$, with the understanding that the literature on this subject is too wide to keep track of all the references. First, by means of a classical method by S.V. Bernstein \cite{Bernstein}, Lipschitz regularity for general (sub-quadratic) quasi-linear problems goes back to standard literature \cite{LSU, Serrin}. See also \cite{CLS} for specific results on HJ equations. For sub-quadratic problems, we also mention some techniques based on Sobolev embeddings and interpolation \cite{AC, GPSMsub}, see also \cite{gomesbook}. Then, the so-called Ishii-Lions method \cite{IshiiLions}, has been extensively developed in the realm of HJ equations, see e.g. \cite{BarlesDIeq, BSouga, PP,DirrNguyen}; this method has been coupled succesfully with the Bernstein idea \cite{ArmstrongTran, Davini, LeyNguyen}. Though some of these mentioned results cover the full super-linear regime (if not the sub-quadratic one only), we emphasize that at least continuity of $f$ is always required. \\
Our analysis is based on a duality approach, rather than on viscosity methods. The study of linear equations through their duals (adjoint) is a classical idea, which has been explored recently in the nonlinear framework of HJ equations by L.C. Evans \cite{Ecc}. As already mentioned at the beginning of this introduction, its application to Lipschitz regularity for viscous HJ equations appearing in Mean-Field Game systems has been then investigated in \cite{GomesStocEA, GPSMsup}. In these works some restrictions on the growth of $H$, i.e. $\gamma < 3$ in \cite{GPSMsup}, or on  the space dimension $d$ \cite{GomesStocEA} are imposed. 
Here, we obtain results for all $\gamma > 1$ and $d \in \mathbb N$, by using extensively maximal regularity results in the analysis of the dual equation. We also emphasize that previous works explore a priori regularity of {\it smooth} solutions $u$, while here we deal with least possible {\it weak} solutions to \eqref{hjb}.

We are now ready to state our main results. Assume that $d \ge 2$, and $A=(a_{ij}) :Q_T\to {\rm Sym}(\R^d)$, where ${\rm Sym}(\R^d)$ is the set of symmetric  $d \times d$ real matrices, $a_{ij} \in C(0, T; W^{2,\infty}(\T))$ and
\begin{align}
\tag{A}\label{A1} & \text{ for some }\lambda>0, \quad \lambda|\xi|^2\leq a_{ij}(x,t)\xi_i\xi_j\leq\lambda^{-1}|\xi|^2 \text{ for all }\xi\in\R^d \text{ and all } (x, t) \in Q_T.
\end{align}
Here and in the sequel the summation over repeated indices is understood. We perform our analysis on the flat torus $\T= \R^d / \mathbb Z^d$, to avoid boundary phenomena. The treatment of problems on the whole $\R^d$ should require a modest review of the methods developed here. A local analysis on bounded domains is on the other hand much more delicate (possible boundary blow-up of $Du$ is expected \cite{PSou}), and will be matter of future work. \\
We suppose that $H(x,p)$ is $C^1(\T\times\R^d)$, and
\begin{equation}\label{H}\tag{H}
\begin{aligned}
& \text{there exist constants $\gamma > 1$ and $C_H>0$ such that}\\
& \qquad C_H^{-1}|p|^{\gamma}-C_H\leq H(x,p)\leq C_H(|p|^{\gamma}+1)\ ,\\
& \qquad D_pH(x,p)\cdot p-H(x,p)\geq C^{-1}_H|p|^{\gamma}-C_H\ , \\
& \qquad |D_{x}H(x,p)|\leq C_H(|p|^{\gamma}+1) \ , \\
& \qquad C^{-1}_H|p|^{\gamma-1}- {C}_H \le |D_{p}H(x,p)|\leq C_H|p|^{\gamma-1}+{C}_H \ ,
\end{aligned}
\end{equation}
for every $x\in \T$, $p\in\R^d$. Moreover, we suppose without loss of generality that $H \ge 0$ (if not, one may compensate by adding a positive constant to $f$). Note that our model Hamiltonian \eqref{typh} satisfies \eqref{H}; we mention that the assumptions on $b$ in  \eqref{typh} could be relaxed, but this is beyond the scopes of this paper. Moreover, an explicit dependance with respect to the time variable $t$ could be easily added to $H$ provided that it respects the growth properties stated in \eqref{H}.

The first result concerns the regularizing effect of the equation, namely Lipschitz regularity of {\it weak} solutions $u$ for positive times. If the initial datum is assumed to be Lipschitz, then such regularity can be extended uniformly up to $t = 0$.
Below $\gamma' = \gamma/(\gamma-1)$ is the conjugate exponent of $\gamma$.

\begin{thm}\label{LipReg} Suppose that
\begin{itemize}
\item[$\bullet$] $a_{ij} \in C(0, T; W^{2,\infty}(\T))$ and satisfies \eqref{A1},
\item[$\bullet$] $H \in C^1(\T\times\R^d)$, it is convex in the second variable, and satisfies \eqref{H},
\item[$\bullet$] $f \in L^q(Q_T)$, for some $q > {d+2}$ and $q \ge \frac{d+2}{\gamma'-1}$.
\end{itemize}

{\bf a)} Let $u$ be a {\it local weak} solution to \eqref{hjb} (in the sense of Definition \ref{wsol}) with $\sP = \sQ$ in \eqref{conditionAS}, i.e.
\[
D_p H(\cdot, Du) \in L^{\mathpzc{P}}(\T \times (t_1, T)) \qquad \text{for some} \ \sP \ge d+2 \quad \text{and \quad for all \ $t_1 > 0$}.
\]
Then, $u(\cdot, \tau) \in W^{1,\infty}(\T)$ for all $\tau \in (0, T]$. In particular, for all $t_1 \in (0, T)$ there exists a positive constant $C_1$ depending on $t_1$, $\lambda, \|a\|_{C(W^{2,\infty})}$, $C_H$, $\|u\|_{C(\overline Q_T)}$, $\norm{f}_{L^{q}(Q_T)}$, $q$, $d$, $T$ such that
\begin{equation}\label{locreg}
\|u(\cdot, \tau)\|_{W^{1,\infty}(\T)} \le C_1 \qquad \text{for all} \quad \tau \in [t_1, T].
\end{equation}

{\bf b)} If, in addition, $u_0 \in W^{1,\infty}(\T)$, and $u$ is a {\it global weak} solution, then there exists a positive constant $C_2$ depending on $\lambda, \|a\|_{C(W^{2,\infty})}$, $C_H$, $\|u_0\|_{W^{1,\infty}(\T)}$, $\norm{f}_{L^{q}(Q_T)}$, $q$, $d$, $T$ such that
\begin{equation}\label{globreg}
\|u(\cdot, \tau)\|_{W^{1,\infty}(\T)} \le C_2 \qquad \text{for all} \quad \tau \in [0, T].
\end{equation}

Moreover, the same conclusions hold if $u$ is a weak solution to \eqref{hjb} with $\sP \neq \sQ$ in \eqref{conditionAS} whenever $a_{ij}(x,t) = A_{ij}$ on $Q_T$ for some $A_{ij} \in {\rm Sym}(\R^d)$ satisfying \eqref{A1}.
\end{thm}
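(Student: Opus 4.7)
The plan is to prove a priori Lipschitz estimates on a smoothed problem via the duality (adjoint) method, and then transfer them to the original weak solution by passing to the limit in a suitable approximation. So first I would replace $f$ by mollifications $f_n \to f$ in $L^q(Q_T)$, with $u_0$ possibly smoothed as well, and consider classical solutions $u_n$ of the corresponding regularized HJ problem. A standard multiplication by $u_n$ combined with the coercivity condition $D_pH\cdot p - H \ge C_H^{-1}|p|^\gamma - C_H$ and the $L^\infty$ bound on $u_n$ provides a uniform estimate on $\|D_pH(\cdot,Du_n)\|_{L^{\mathpzc{P}}(Q_T)}$ compatible with the standing integrability hypothesis on $Du$, which is the input needed to run the duality argument.

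\textbf{Adjoint duality at a point.} Fix $(x_0,t_0) \in \T \times (0,T]$ and a unit vector $\xi \in \R^d$. For smooth $u_n$, the function $v := \xi\cdot Du_n$ satisfies the linearized HJ
\[
\partial_t v - a_{ij}\partial_{ij}v + D_pH(x,Du_n)\cdot Dv = \xi\cdot Df_n - \xi\cdot D_xH(x,Du_n) + \xi_k(\partial_k a_{ij})\partial_{ij}u_n.
\]
Pick a nonnegative mollifier $\rho_0 \in C^\infty(\T)$ with unit mass concentrated near $x_0$, and let $\rho \ge 0$ solve the backward Fokker–Planck equation
\[
\begin{cases}
-\partial_t\rho - \partial_{ij}(a_{ij}\rho) - \dive(D_pH(x,Du_n)\rho) = 0 & \text{in } \T\times(0,t_0),\\
\rho(\cdot,t_0) = \rho_0.
\end{cases}
\]
Pairing the linearized HJ with $\rho$ and integrating by parts — crucially moving the derivative on $f_n$ onto $\rho$ so that no regularity of $f$ is used — yields the duality identity
\[
\int_\T (\xi\cdot Du_n)(x,t_0)\rho_0\,dx = \int_\T (\xi\cdot Du_n)(x,0)\rho(x,0)\,dx - \int_0^{t_0}\!\!\int_\T f_n\,(\xi\cdot D\rho)\,dx\,dt + \mathcal{R}_n,
\]
where $\mathcal{R}_n$ collects the $D_xH$ and $\partial_k a_{ij}$ contributions, each integrated against $\rho$.

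\textbf{Maximal regularity for $\rho$.} The technical heart is to bound $\|\rho\|_{L^{q'}(Q_{t_0})}$ and $\|D\rho\|_{L^{q'}(Q_{t_0})}$ by constants independent of $n$ and (in an appropriate scaling) of the mollification of $\rho_0$. Interpreting the Fokker–Planck equation as a backward parabolic equation with a transport term whose coefficient $D_pH(\cdot,Du_n)$ lies in $L^{\mathpzc{P}}$ with $\mathpzc{P} \ge d+2$, one applies parabolic maximal $L^p$ regularity (of Ladyzhenskaya–Solonnikov–Ural'tseva type) together with Sobolev–Morrey embeddings adapted to drifts in the critical $L^{d+2}$ scale. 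The assumption $q > d+2$ with $q \ge (d+2)/(\gamma'-1)$ is precisely what makes the dual exponent $q'$ fall in the range where these estimates apply and where the product with $\|D_xH(\cdot,Du_n)\|\lesssim |Du_n|^\gamma$ (already controlled in $L^{\mathpzc{P}/(\gamma-1)}$ times $L^{q'}$) remains bounded. Hölder's inequality then gives
\[
\Big|\int_0^{t_0}\!\!\int_\T f_n\,(\xi\cdot D\rho)\Big| \le \|f_n\|_{L^q}\|D\rho\|_{L^{q'}} \le C,
\]
and similarly for $\mathcal{R}_n$.

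\textbf{Pointwise bound and closing the argument.} Letting $\rho_0$ concentrate at $x_0$ and taking the supremum over unit $\xi$ yields a bound on $|Du_n(x_0,t_0)|$ uniform in $n$, with the initial term controlled either by $t_1^{-1}\|u_n\|_\infty$ (using a local-in-time refinement for case \textbf{a)}) or by $\|u_0\|_{\mathrm{Lip}}$ (giving the up-to-zero bound in \textbf{b)}). The limit $n\to\infty$ then transfers the estimate to $u$ via the stability of weak solutions. The last remark (constant $A_{ij}$) is immediate because the term $\xi_k(\partial_k a_{ij})\partial_{ij}u_n$ in the linearized HJ vanishes, so one never has to control $\partial^2 u$, which permits decoupling the exponents $\mathpzc{P}$ and $\mathpzc{Q}$. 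The main obstacle I anticipate is precisely the maximal regularity step for the Fokker–Planck equation with an $L^{\mathpzc{P}}$ drift and low-regularity terminal data: one needs $W^{2,1}_p$ estimates at the borderline scaling $\mathpzc{P} = d+2$, and a careful bookkeeping of exponents to ensure that $\|D\rho\|_{L^{q'}}$ and $\|\rho |Du_n|^\gamma\|_{L^1}$ are simultaneously controllable under the stated range of $q$.
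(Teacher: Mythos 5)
Your high-level strategy (duality with the backward Fokker--Planck equation, moving the derivative of $f$ onto $\rho$, and estimating $\|D\rho\|_{L^{q'}}$) is the right one, but the execution has two genuine gaps that the paper's proof is specifically engineered to avoid.

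\textbf{Gap 1: you do not prove regularity of the given weak solution.}
Your plan is to mollify the data, produce classical solutions $u_n$, prove a uniform Lipschitz bound, and pass to the limit. That construction yields \emph{some} Lipschitz solution $\tilde u$, not the weak solution $u$ you were handed in the statement. To conclude $u=\tilde u$ you need uniqueness of weak solutions in the class of Definition~\ref{wsol}; you call this ``stability of weak solutions'' but never establish it, and it is in fact the content of a separate theorem in the paper (Theorem~\ref{texun}). The paper sidesteps this entirely by working directly with the given $u$: since $u$ is only a weak $\H^1_2$ solution and $f\in L^q$, the linearized equation for $\partial_\xi u$ you write is not justified; instead the paper uses difference quotients $u(x+h\xi,\cdot)-u(x,\cdot)$ and the convex-duality (verification) identity $H(x,p)=\sup_\nu\{\nu\cdot p-L(x,\nu)\}$, comparing the choice $\Xi(x,t)=D_pH(x,Du(x,t))$ with the shifted choice $\Xi(x,t)=D_pH(x-h\xi,Du(x-h\xi,t))$ in \eqref{HJoptimalLOC}. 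This is what allows the Lipschitz estimate to be obtained for the weak solution itself, without ever differentiating the equation or invoking uniqueness.

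\textbf{Gap 2: the quantitative bound on $D\rho$ cannot come from the drift being in $L^{\mathpzc{P}}$.}
You propose to bound $\|D\rho\|_{L^{q'}}$ via parabolic maximal regularity using $b=D_pH(\cdot,Du_n)\in L^{\mathpzc{P}}$, $\mathpzc{P}\ge d+2$. This would produce a constant depending on $\|D_pH(\cdot,Du_n)\|_{L^{\mathpzc{P}}}$, which is not uniformly controlled along the approximation and is essentially (a power of) the quantity you are trying to bound, so the argument would be circular. In the paper the Aronson--Serrin condition on $b$ is used only for \emph{qualitative} well-posedness of $\rho$ (Proposition~\ref{wellADJ}); the \emph{quantitative} estimate $\|\rho\|_{\H^1_{q'}(Q_\tau)}\le C$ (Corollary~\ref{EstimateK1}) is deduced from the weighted bound $\iint|D_pH(x,Du)|^{\gamma'}\rho\,dxdt\le C$ of Proposition~\ref{EstimateK}, which comes from the value/representation formula of Lemma~\ref{representationformulaHJ} and depends only on $\|u\|_{C(\overline Q_T)}$ and $\|f\|_{L^q}$. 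Propositions~\ref{estFP} and~\ref{estFP2} then convert this weighted $L^{\gamma'}(\rho)$ control of the drift into the needed $\H^1_{q'}$ bound on $\rho$ via a duality trick with an auxiliary $W^{2,1}_q$ solution $\psi$ and the parabolic embedding of $\H^1_{q'}$ into $L^p$, $1/p=1/q'-1/(d+2)$. Without this mechanism, the exponent bookkeeping you allude to does not close; with it, the remaining term $\|Da\|_\infty\|\eta Du\|_{L^{(d+2)(\gamma-1)}}$ is absorbed by interpolation between $L^\gamma$ (controlled by Lemma~\ref{StimaDu}) and $L^\infty$, which is exactly why the condition $\mathpzc{P}=\mathpzc{Q}$ is needed unless $Da\equiv 0$.
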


Note that if $\gamma \le 2$ (i.e. the sub-quadratic/quadratic regime), then $f$ is required to be in $L^q(Q_T)$ for some $q > {d+2}$, while in the super-quadratic case $\gamma > 2$ conditions on $f$ are more strict.

\medskip

We are then able to show the existence and uniqueness of weak solutions.

\begin{thm}\label{texun} Suppose that the assumptions on $a, f, H$ of Theorem \ref{LipReg} are in force. If $u_0 \in C(\T)$, then there exists a unique local weak solution to \eqref{hjb}.
If $u_0 \in W^{1,\infty}(\T)$, then such a solution is a global weak solution.
\end{thm}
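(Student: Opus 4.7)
The strategy combines a vanishing--regularization argument for existence with a linearization/comparison argument for uniqueness, employing Theorem~\ref{LipReg} as the key a priori bound.

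\emph{Existence.} Choose $f_n \in C^\infty(\overline Q_T)$ with $f_n \to f$ in $L^q(Q_T)$, and $u_{0,n} \in C^\infty(\T)$ with $u_{0,n} \to u_0$ uniformly on $\T$ (keeping a uniform $W^{1,\infty}$-bound when $u_0 \in W^{1,\infty}$). Classical quasilinear parabolic theory produces, for each $n$, a smooth solution $u_n$ of the regularized problem, for which the integrability condition of Definition~\ref{wsol} is trivially met. One next needs uniform estimates: a uniform $L^\infty$ bound on $u_n$ follows by standard parabolic techniques (use $H \ge 0$ and comparison with the linear equation $\partial_t v - a_{ij}\partial_{ij} v = f_n$ for the upper bound, and the coercivity part of \eqref{H} for the lower one) and depends only on $\|u_0\|_\infty$ and $\|f\|_{L^q}$. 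Theorem~\ref{LipReg} (part (a) or (b) according to the regularity of $u_0$) then yields a uniform Lipschitz bound on $u_n(\cdot, t)$ for $t \ge t_1$. With $Du_n$ uniformly bounded, the nonlinearity $H(\cdot, Du_n)$ is uniformly bounded as well, so maximal $L^q$-regularity applied to $\partial_t u_n - a_{ij}\partial_{ij}u_n = f_n - H(\cdot, Du_n)$ gives uniform $W^{2,1}_q$-bounds on every $\T \times (t_1, T)$. Aubin--Lions compactness then provides a subsequence with $u_n \to u$ uniformly on compact subsets of $Q_T$ and $Du_n \to Du$ strongly in $L^q_{\rm loc}$. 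Dominated convergence passes the Hamiltonian to the limit in the weak formulation of \eqref{hjb}, and the limit $u$ inherits the integrability condition required by Definition~\ref{wsol} (globally if $u_0 \in W^{1,\infty}$, locally otherwise).

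\emph{Uniqueness.} Let $u_1, u_2$ be two local weak solutions sharing the same initial datum. By Theorem~\ref{LipReg} both are Lipschitz on $\T \times [t_1, T]$ for every $t_1 > 0$, and maximal parabolic regularity boosts this to $u_i \in W^{2,1}_p(\T \times (t_1, T))$ for every $p < \infty$. Their difference $w = u_1 - u_2$ then satisfies almost everywhere the linear equation
\[
\partial_t w - a_{ij}(x,t)\partial_{ij}w + B(x,t)\cdot Dw = 0, \qquad B(x,t) := \int_0^1 D_p H\bigl(x, \tau Du_1 + (1-\tau)Du_2\bigr)\,d\tau,
\]
with $B \in L^\infty(\T \times (t_1,T); \R^d)$. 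The classical comparison principle for linear uniformly parabolic equations with bounded drift gives $\|w(\cdot, t)\|_\infty \le \|w(\cdot, t_1)\|_\infty$ for $t \in [t_1, T]$, and since $u_1, u_2 \in C(\overline Q_T)$ share the same initial trace, the right-hand side tends to $0$ as $t_1 \to 0^+$. Hence $w \equiv 0$ on $(0, T]$.

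\emph{Main obstacle.} The delicate step is the passage to the limit in the nonlinear Hamiltonian, which requires strong convergence of $Du_n$. This is exactly where Theorem~\ref{LipReg} enters crucially: the uniform Lipschitz bound linearises the equation (in the sense that $H(\cdot, Du_n)$ becomes a uniformly bounded source), opening the door to a bootstrap via maximal $L^q$-regularity that delivers the needed compactness. A minor subtlety when only $u_0 \in C(\T)$ is that the Lipschitz estimate may degenerate as $t \to 0^+$, so both the compactness and the uniqueness arguments must be performed on the slabs $\T \times [t_1, T]$ and then extended by continuity up to $t = 0$.
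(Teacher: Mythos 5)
Your existence scheme (regularize, uniform $L^\infty$ and Lipschitz bounds, maximal regularity, compactness, passage to the limit) is essentially the one in the paper, and your uniqueness argument is a legitimate alternative, but there is a genuine gap and a point of restricted generality.

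\textbf{Main gap: continuity of the limit at $t = 0$.} When $u_0 \in C(\T)$ only, the uniform Lipschitz estimates degenerate as $t \to 0^+$. Your compactness argument therefore gives $u_n \to u$ uniformly on every slab $\T \times [t_1, T]$ with $t_1 > 0$, producing a function $u$ that is \emph{a priori} continuous only on $\T \times (0, T]$. You call the extension to $t = 0$ a ``minor subtlety'' to be handled ``by continuity,'' but nothing in your argument supplies a modulus of continuity near $t = 0$ nor shows that $u(x,t) \to u_0(x_0)$ as $(x,t) \to (x_0,0)$. Uniform convergence away from $t = 0$ plus $u_{0,n} \to u_0$ does not imply this: you could in principle have a jump discontinuity at $t = 0$. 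This is exactly the point the paper calls ``a delicate step.'' The paper closes it by proving separately the lower and upper semicontinuity inequalities
\[
u_0(x_0) \le \liminf_{(x,t)\to(x_0,0)} u(x,t), \qquad u_0(x_0) \ge \limsup_{(x,t)\to(x_0,0)} u(x,t),
\]
via duality with adjoint Fokker--Planck solutions whose terminal data concentrate at $\delta_{\bar x}$; the Sobolev bound on $\rho$ (Corollary~\ref{EstimateK1}) and the H\"older-in-time estimate for $\rho$ in the Kantorovich--Rubinstein metric (Corollary~\ref{EstimateK3}) are what make the terms $\iint f\rho$ and $\int u_0(\rho(0) - \rho(\bar t))$ vanish as $\bar t \to 0$. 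This machinery has no substitute in your proposal and must be supplied.

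\textbf{Uniqueness: a different route with narrower reach.} You linearize, producing a bounded drift $B$ via the mean-value theorem, and then apply the parabolic maximum principle on slabs $\T \times [t_1,T]$. This is a valid strategy once both $u_1, u_2$ are Lipschitz for $t \ge t_1$; however, you obtain that regularity by invoking Theorem~\ref{LipReg}, whose part~(a) requires the extra assumption $\sP = \sQ$ in \eqref{conditionAS} (unless $a$ has constant coefficients). The paper's uniqueness proof avoids Lipschitz regularity entirely: it exploits convexity of $H(x,\cdot)$ to derive a one-sided linearized inequality, tests it against the adjoint $\rho$ of the Fokker--Planck equation with drift $D_pH(x,Du_2)$, and lets $s \to 0$ using only $w \in C(\overline Q_T)$, $w(\cdot,0)=0$, and $\|\rho(s)\|_{L^1}=1$. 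The duality argument therefore shows uniqueness within the \emph{full} class of local weak solutions in Definition~\ref{wsol}, not only those with $\sP=\sQ$, and makes essential use of the convexity hypothesis that your argument discards. Either strengthen your uniqueness statement by noting the extra restriction, or replace the linearization step with a direct duality argument.
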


\medskip

Finally, if we assume in addition that $u$ is a {\it classical} solution to \eqref{hjb}, we have the following a priori regularity results. Note that, with respect to the previous Theorem \ref{LipReg}, Lipschitz bounds will depend on weaker informations on the data $a, f$.

\begin{thm}\label{apriori} Suppose that
\begin{itemize}
\item[$\bullet$] $a_{ij} \in C([0, T]; C^{1}(\T))$ and satisfies \eqref{A1},
\item[$\bullet$] $H \in C^2(\T\times\R^d)$ and satisfies \eqref{H},
\item[$\bullet$] $f \in C([0, T]; C^{1}(\T))$,
\item[$\bullet$] $u_0 \in C^1(\T)$.
\end{itemize}
Let 
\begin{equation}\label{qco2}
q > \min\left\{d+2, \frac{d+2}{2(\gamma'-1)}\right\}.
\end{equation}
Then, there exists a positive constant $C_3$ depending on  $q$, $d$, $T$, $\lambda$, $C_H$, $\|u_0\|_{W^{1,\infty}(\T)}$, $\norm{f}_{L^{q}(Q_T)}$, $\|a\|_{C(0, T; W^{1,\infty}(\T))}$, such that every classical solution to \eqref{hjb} satisfies
\begin{equation}\label{locregclass}
\|u(\cdot, \tau)\|_{W^{1,\infty}(\T)} \le C_3 \qquad \text{for all} \quad \tau \in [0, T].
\end{equation}
\end{thm}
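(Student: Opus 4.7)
My plan is to apply the adjoint/duality method directly to the classical solution $u$, so as to bypass the approximation procedure of Theorem~\ref{LipReg} and exploit only $a_{ij}\in C([0,T];C^1(\T))$. Fix $\tau\in(0,T]$, $x_0\in\T$ and a unit vector $\xi\in\R^d$, and take a smooth probability density $\mu$ on $\T$ concentrated near $x_0$. The key object will be the non-negative solution $\rho$ of the backward Fokker--Planck equation
\begin{equation*}
-\partial_t\rho-\partial_{ij}(a_{ij}\rho)-\dive\bigl(D_pH(x,Du)\,\rho\bigr)=0 \ \ \text{in }\T\times(0,\tau),\qquad \rho(\cdot,\tau)=\mu,
\end{equation*}
which preserves mass, so $\int_\T\rho(\cdot,t)\,dx=1$ for all $t\in[0,\tau]$. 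Differentiating \eqref{hjb} in direction $\xi$, pairing the resulting linear equation for $\xi\cdot Du$ against $\rho$ and integrating by parts, I will obtain (after $\mu\to\delta_{x_0}$) the representation
\begin{equation*}
\xi\cdot Du(x_0,\tau)=\int_\T\xi\cdot Du_0\,\rho(\cdot,0)+\int_0^\tau\!\!\int_\T\rho\,\bigl[\xi\cdot Df-\xi\cdot D_xH+(\xi\cdot Da_{ij})\,\partial_{ij}u\bigr].
\end{equation*}

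Each right-hand side term will then be estimated separately. The initial contribution is immediately $\le\|Du_0\|_\infty$; integrating by parts once in $x$, the $Df$ term becomes $-\int\!\int f\,(\xi\cdot D\rho)$, which is bounded via H\"older by $\|f\|_{L^q(Q_T)}\|D\rho\|_{L^{q'}(Q_T)}$. For the $D_xH$ term I use $|D_xH|\le C_H(|Du|^\gamma+1)$ together with the ``energy'' identity obtained by testing \eqref{hjb} against $\rho$: the coercivity $D_pH\cdot p-H\ge C_H^{-1}|p|^\gamma-C_H$ in \eqref{H} gives
\begin{equation*}
\int_0^\tau\!\!\int_\T|Du|^\gamma\rho\le C\bigl(1+\|u\|_\infty+\|f\|_{L^q}\|\rho\|_{L^{q'}}\bigr),
\end{equation*}
a preliminary $\|u\|_\infty$ bound being obtained via a standard sub/supersolution comparison (using $q>d+2$). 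The delicate term is the one with $\partial_{ij}u$, since $\|a\|_{C(W^{1,\infty})}$ is not enough for a direct integration by parts (that would generate $D^2a$). My idea is instead to read \eqref{hjb} as a linear equation $\partial_tu-a_{ij}\partial_{ij}u=f-H(\cdot,Du)$ and invoke parabolic $W^{2,1}_p$ maximal regularity, which requires only continuous $a$ and yields $\|D^2u\|_{L^p(Q_T)}\le C(1+\|f\|_{L^p}+\|Du\|_{L^{p\gamma}}^\gamma+\|u_0\|_{W^{2-2/p,p}})$, so that $\int\!\int\rho|Da||\partial_{ij}u|\le\|a\|_{W^{1,\infty}}\|\rho\|_{L^{p'}}\|D^2u\|_{L^p}$ for a suitable $p=p(q)$.

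The final ingredient will be parabolic maximal regularity for the Fokker--Planck equation, providing $\|\rho\|_{L^{q'}(Q_T)}$ and $\|D\rho\|_{L^{q'}(Q_T)}$ (and the corresponding $L^{p'}$ norms) in terms of $\|\mu\|_{L^1}=1$ and of the drift $\|D_pH(\cdot,Du)\|_{L^P}\lesssim\|Du\|_{L^{P(\gamma-1)}}^{\gamma-1}$. The threshold \eqref{qco2} will arise from optimally matching these two maximal regularity estimates, where the factor $2$ improvement (compared to Theorem~\ref{LipReg}) reflects the fact that we only need to close one, rather than two, integration-by-parts steps. Plugging everything back into the representation formula yields an improved $L^r$-bound on $Du$, which is fed into both maximal regularity estimates; iterating this bootstrap finitely many times, starting from the crude energy bound $Du\in L^\gamma(Q_T)$ and closing through a Morrey-type embedding, will deliver $Du\in L^\infty(Q_T)$, which is \eqref{locregclass}. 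The main obstacle will be to organize this bootstrap in a self-contained way compatible with both $a_{ij}\in C([0,T];C^1(\T))$ and the sharper condition \eqref{qco2} on $q$: this is precisely where the classical-solution assumption pays off, since pointwise differentiation of \eqref{hjb} is then legitimate without the smoothing procedure that, in the weak framework of Theorem~\ref{LipReg}, forces $a\in W^{2,\infty}$ and the stronger lower bound on $q$.
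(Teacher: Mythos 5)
Your proposal departs genuinely from the paper: the paper applies the Bernstein method, working with $z=|Du|^2/2$ and the equation it satisfies, rather than with the directional derivative $\xi\cdot Du$. The key dividend of Bernstein is the \emph{good} quadratic term $\lambda\iint_{Q_\tau}\mathrm{Tr}\big((D^2u)^2\big)\rho$ coming from strict ellipticity (the $\sum_k AD\partial_k u\cdot D\partial_k u$ piece), which is then used to absorb \emph{both} the mixed term $\iint \partial_k u\,\mathrm{Tr}(\partial_k A\,D^2 u)\rho$ \emph{and} the term $\iint f\,\mathrm{Tr}(D^2 u)\rho$ by Cauchy--Schwarz. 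This absorption is exactly what lets the paper require only $a_{ij}\in C([0,T];C^1(\T))$ and get the improved threshold on $q$. That structural term is absent in your directional approach, and the replacement you propose has a gap.

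Concretely, you handle $\iint\rho\,(\xi\cdot Da_{ij})\partial_{ij}u$ by H\"older plus maximal $W^{2,1}_p$ regularity, which gives $\|D^2u\|_{L^p}\lesssim 1+\|f\|_{L^p}+\|Du\|_{L^{p\gamma}}^\gamma+\|u_0\|_{W^{2-2/p,p}}$. To close a bootstrap this must be controlled by a sublinear power of $\|Du\|_\infty$: interpolating $\|Du\|_{L^{p\gamma}}^\gamma\le\|Du\|_\infty^{\gamma(1-1/p)}\|Du\|_{L^\gamma}^{\gamma/p}$ requires $\gamma(1-1/p)<1$, i.e.\ $p<\gamma'$. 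On the other hand you need $\rho\in L^{p'}(Q_T)$, and the best estimate available (Corollary~\ref{EstimateK2} and \eqref{estik4}) gives $\rho\in L^{\bar p'}$ only for $\bar p>\tfrac{d+2}{2(\gamma'-1)+1}$. Since $\gamma'\to1$ as $\gamma\to\infty$, the window $\tfrac{d+2}{2(\gamma'-1)+1}<p<\gamma'$ is empty for $\gamma$ large, so your bootstrap cannot close in the superquadratic range, whereas Theorem~\ref{apriori} covers all $\gamma>1$. In addition, the maximal-regularity step needs the compatibility $u_0\in W^{2-2/p,p}(\T)$, which $u_0\in C^1(\T)$ does not provide for $p>2$; cutting off in time fixes the constant but loses the bound down to $\tau=0$ claimed in \eqref{locregclass}. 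Finally, the factor-of-two gain in \eqref{qco2} does not come from ``closing one fewer integration by parts''; in the paper it comes from the left-hand side being $\|Du(\cdot,\tau)\|_{L^\infty}^2$ (since one tests the equation for $|Du|^2$), which can absorb $\|Du\|_\infty^{2-\delta}$ contributions and thereby enables the relaxed requirement $\bar q>\tfrac{d+2}{2(\gamma'-1)}$ of Corollary~\ref{EstimateK2}. Without that squared left-hand side, the directional approach reproduces only the weak-solution threshold $q\ge\tfrac{d+2}{\gamma'-1}$ of Theorem~\ref{LipReg}.
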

Note that \eqref{qco2} reads
\[
q > 
\begin{cases}
d+2 & \text{if $1 < \gamma \le 3$,} \\
 \frac{d+2}{2(\gamma'-1)} & \text{if $\gamma > 3$}.
\end{cases}
\]
 In particular, we obtain ``maximal regularity'' whenever $\gamma \le 3$, that is a control on $\partial_t u, \partial_{ij} u$ and $H(Du)$ in $L^q$ with respect to the the $L^q$ norm of $f$ for any $q > d+2$. The results obtained for $\gamma \geq 3$ are also new, and constitute a first step in the achievement of a parabolic counterpart of a remarkable result by P.-L. Lions \cite[Theorem III.1]{Lions85} in the stationary case, that states Lipschitz (and therefore maximal) regularity of solutions to viscous HJ equations for {\it all} $\gamma > 1$ and $f \in L^p$, $p > d$.
 
\medskip

It is worth remarking that our results apply also to the so-called Kardar-Parisi-Zhang equations
\[
\ds \partial_t v(x,t) - \sum_{i,j=1}^d a_{ij}(x,t) \partial_{ij}v(x,t) = G(x, Dv(x,t)) - f(x,t) \qquad \text{in $Q_T = \T \times (0, T)$,}
\]
whenever $G$ satisfies \eqref{H}. In other words, the sign in front of $H$ (and of $f$) does not matter here. Indeed, it is sufficient to observe that $u(x,t) = -v(x,t)$ solves \eqref{hjb} with $H(x, p) = G(x, -p)$.

\medskip
 
 In the next Section \ref{deriv} we briefly describe our methods, and comment on crucial hypotheses that appear in Theorems \ref{LipReg}, \ref{apriori} and in the Definition \ref{wsol} of weak solutions to \eqref{hjb}. In the rest of Section \ref{basic} we present some preliminary facts and results on the adjoint equation. Sections \ref{slip} and \ref{bernst} will be devoted mainly to the proofs of Theorems \ref{LipReg} and \ref{apriori} respectively, on Lipschitz regularity of solutions. In Section \ref{exun} we will prove the main existence and uniqueness result.

\bigskip

{\bf Acknowledgements.} The authors are members of the Gruppo Nazionale per l'Analisi Matematica, la Probabilit\`a e le loro Applicazioni (GNAMPA) of the Istituto Nazionale di Alta Matematica (INdAM). This work has been partially supported by 
the Fondazione CaRiPaRo
Project ``Nonlinear Partial Differential Equations:
Asymptotic Problems and Mean-Field Games". The authors are indebted to the referees for a careful review which meant a significant improvement of the original version of the manuscript.

\section{Heuristics, functional spaces, weak solutions and basic properties}\label{basic}

\subsection{Heuristic derivation of Lipschitz estimates}\label{deriv}

We begin with a heuristic description of the adjoint method that will be made rigorous in the sequel, and compare with related works \cite{GomesStocEA, GPSMsup}. Let us assume that $u$ is a smooth solution of the viscous HJ equation 
\begin{equation}\label{hjb2}
\partial_t u(x,t) - \Delta u(x,t) + H(Du(x,t)) = f(x,t)
\end{equation}
with $u(\cdot, 0)\in C^1(\T)$. Let $f$ be $C^1$ in the space variable. We differentiate the equation to study the regularity of $Du$, namely, for any direction $\xi\in\R^d$ with $|\xi|=1$, we consider $v=\partial_{\xi} u$. Then, $v$ solves the linearized equation
\begin{equation}\label{veq}
\partial_tv-\Delta v+D_pH(Du)\cdot Dv=\partial_\xi f\ .
\end{equation}
For any $\tau \in (0,T)$, $x_0 \in \T$, we then look at the adjoint equation with singular final datum
\begin{equation}\label{adjoint}
\begin{cases}
-\partial_t\rho-\Delta\rho-\dive(D_pH(Du)\rho)=0&\text{ in } \T \times (0, \tau)\ ,\\
\rho(\tau)=\delta_{x_0}&\text{ on }\T\ .
\end{cases}
\end{equation}
By duality between \eqref{veq} and \eqref{adjoint} we immediately get
\begin{equation*}
\partial_\xi u(x_0, \tau) = \langle v(\tau),\rho(\tau) \rangle=\iint_{\T \times (0, \tau)} \partial_{\xi} f \, \rho + \int_{\T}v\rho(0) = - \iint_{\T \times (0, \tau)} f \, \partial_{\xi} \rho + \int_{\T}\partial_\xi u\,\rho(0).
\end{equation*}
Thanks to integration by parts in the previous formula, we realize that our representation of $\partial_\xi u(x_0, \tau)$ roughly depends on $\|f\|_{L^{q}(Q_T)}$ and $\|D \rho\|_{L^{q'}(Q_T)}$, so, the more we know on the integrability of $D \rho$, the less we can assume on the integrability of the datum $f$. The difficulty here is that $\rho$ depends on $Du$ itself through the divergence term in \eqref{adjoint}, and has a final datum that is a Dirac measure. Note that solutions to heat equations with measure data have gradients in ${L^{q'}(Q_T)}$ just for $q' < (d+2)/(d+1) = (d+2)'$. Therefore, since we do not expect the additional divergence term to improve such regularity, we will always have to require by duality $f$ to be in $L^q$ with $q > d+2$ (which is optimal, see Remark \ref{rem2}).

The transport (divergence) term in \eqref{adjoint} is handled by exploiting a crucial information on the quantity
\begin{equation}\label{energ}
\iint|D_pH(Du)|^{\gamma'}\rho \, dxdt,
\end{equation}
that is obtained using a sort of duality between \eqref{hjb} and \eqref{adjoint}, and has a very precise meaning in terms of optimality in stochastic control problems (see, e.g. \cite{gomesbook} for further discussions). Such a quantity is actually a weighted $L^{\gamma'}(\rho)$ norm of the drift $b = -D_pH(Du)$ that appears in the divergence term, and turns out to be enough to derive bounds for $\|D \rho\|_{L^{q'}(Q_T)}$. This crucial result is stated in Proposition \ref{estFP2} and exploits a delicate combination of maximal parabolic regularity, interpolation and embeddings of parabolic spaces. We emphasize that this key step regarding regularity of the Fokker-Planck equation is carried out in a completely different way with respect to related papers \cite{GomesStocEA, GPSMsup}. In these works, the techniques used to produce estimates on $D \rho$ are expendable under the assumption that $b$ is {\it at least} $L^2(\rho)$, thus limiting the working range of $\gamma$. Here, we have results on the full superlinear range $\gamma > 1$.

In the next sections we make precise all the above formal computations, for more general equations of the form \eqref{hjb}. In the first part of the paper we aim at obtaining Lipschitz regularity of {\it weak solutions} to \eqref{hjb}, in a sense specified below (see Definition \ref{wsol}). The main issues in this program are the following:
\begin{itemize}

  \item To exploit duality between \eqref{hjb} and \eqref{adjoint} in a weak framework, one has to understand the right weak setting for both equations. We realize here that a suitable weak notion guaranteeing Lipschitz regularity for $u$ is basically the usual energy one for both equations (i.e. $u, \rho \in \H^{1}_2$), on any interval $(t_1, T)$, $t_1 > 0$. This relies strongly on the additional assumption $D_pH(Du) \in L^\sQ((t_1,T); L^\sP)$, which can be considered as a requirement for the adjoint equation \eqref{adjoint} rather than for the given HJ equation \eqref{hjb}, but one should always keep in mind the subtle interplay between equations in duality. Of course this forces the final datum $\rho(\tau)$ to be in $L^2$, and therefore introduces an additional approximation step from $L^2$ to $L^1$ in our scheme. Note that energy estimates on $\rho$ are allowed to deteriorate as $t_1 \to 0$: this is to accomodate the lack of global regularity on $[0,T]$ for $u$, that assumes in general the initial datum in the $C^0$ sense only. \\
One may argue that, for $\gamma$ very large, $|Du|^{\gamma-1} \approx D_pH(Du) \in L^\sQ((0,T); L^\sP)$ is very close to $Du \in L^\infty$. We stress in Section \ref{PQq} that to perform this (seemingly) small step, one cannot avoid in general this assumption on $Du$, and therefore our requirements on weak solutions are optimal to guarantee Lipschitz regularity.

  \item A weak solution $u$ is not a priori a.e. differentiable, and $f \in L^q$, so no differentiation procedure of \eqref{hjb} is justified. This is circumvented by considering difference quotients of $u$ in the $x$-variable, which are handled via a method that is again based on the optimality of $-D_pH(Du)$ in stochastic optimal control problems (though here PDE methods will be involved only).  In this step, convexity of $H(x, \cdot)$ plays an important role.

  \end{itemize}

We stress that the study of regularity, rather than the proof of a priori estimates of smooth solutions to \eqref{hjb}, is a key difference with respect to related works \cite{GomesStocEA, GPSMsup} mentioned previously. We take this different viewpoint in the final Section \ref{bernst}: assuming regularity of the solution, we can improve in some directions the previous procedure. First, it is possible to enhance \eqref{energ} by absorbing part of the gradient term in the left hand side of the Lipschitz estimate.
Second, rather than studying the equation for $\partial_\xi u$, we consider the equation for $|Du|^2$, following the classical idea of S.V. Bernstein. This yields a similar ``linearized'' equation, with additional information on $D^2 u$ coming from strict ellipticity of the operator. This allows us to prove {\it a priori} regularity of smooth solutions $u$ to \eqref{hjb} that depend on weaker integrability properties of $f$ and regularity of $a_{ij}$ with respect to $x$.

We finally mention that in a work by A. Porretta \cite{Por15}, the role of the integrability condition $\iint|b|^{\gamma'}\rho \, dxdt < \infty$ in Fokker-Planck equations is explored deeply. Such a condition is indeed proven to guarantee well-posedness of the equation in terms of distributional solutions, provided that $\gamma' \ge 2$, i.e. in the sub-quadratic case, thus showing that the Aronson-Serrin condition on the drift $b \in L^\sQ((t_1,T); L^\sP)$ is not strictly needed. In the work it is also established a kind of duality between Fokker-Planck and HJ equations, in a setting that is much weaker (solutions are unbounded in $W^{1,0}_2(Q_T)$) than the one used here (local boundedness in $W^{1,0}_2(Q_T)$). While the former setting allows for minimal integrability of $f$ and very general existence and uniqueness results to coupled HJ / Fokker-Planck equations, the latter is proven here to produce many additional regularity properties of solutions (in the full range $\gamma > 1$).

\subsection{Functional spaces}

Since we are working in the periodic setting, let us recall that $L^p(\T)$ is the space of all measurable and periodic functions on $\R^d$ belonging to $L^p_{\mathrm{loc}}(\R^d)$, with norm $\|\cdot\|_p=\|\cdot\|_{L^p((0,1)^d)}$. For positive integers $k$, $W^{k,p}(\T)$ is the space of those functions with (distributional) derivatives in $L^p(\T)$ up to order $k$.

For any time interval $(t_1, t_2) \subseteq \R$, let $Q_{t_1, t_2}:=\T\times (t_1, t_2)$. We will also use the notation $Q_{t_2}:= \T\times (0, t_2)$. For any $p\geq1$ and $Q = Q_{t_1, t_2}$, we denote by $W^{2,1}_p(Q)$ the space of functions $u$ such that $\partial_t^{r}D^{\beta}_xu\in L^p(Q)$ for all multi-indices $\beta$ and $r$ such that $|\beta|+2r\leq  2$, endowed with the norm
\begin{equation*}
\norm{u}_{W^{2,1}_p(Q)}=\left(\iint_{Q}\sum_{|\beta|+2r\leq2}|\partial_t^{r}D^{\beta}_x u|^pdxdt\right)^{\frac1p}.
\end{equation*}
The space $W^{1,0}_p(Q)$ is defined similarly, and is endowed with the norm
\[
\norm{u}_{W^{1,0}_p(Q)}:=\norm{u}_{L^p(Q)}+\sum_{|\beta|=1}\norm{D_x^{\beta}u}_{L^p(Q)}\ .
\]
We define the space $\H_p^{1}(Q)$ as the space of functions $u\in W^{1,0}_p(Q)$ with $\partial_tu\in (W^{1,0}_{p'}(Q))'$, equipped with the norm
\begin{equation*}
\norm{u}_{\mathcal{H}_p^{1}(Q)}:=\norm{u}_{W^{1,0}_p(Q)}+\norm{\partial_tu}_{(W^{1,0}_{p'}(Q))'}\ .
\end{equation*}
Denoting by $C([t_1, t_2]; X)$, $C^\alpha([t_1, t_2]; X)$ and $L^q((t_1, t_2); X)$ the usual spaces of continuous, H\"older and Lebesgue functions respectively with values in a Banach space $X$, we have the following isomorphisms: $W^{1,0}_2(Q) \simeq L^2((t_1, t_2); W^{1,2}(\T))$, and
\begin{multline*}
\H_2^1(Q) \simeq \{u \in L^2((t_1, t_2); W^{1,2}(\T)), \, \partial_t u \in  (\, L^2((t_1, t_2); W^{1,2}(\T))\, )'  \} \\ \simeq \big\{u \in L^2((t_1, t_2); W^{1,2}(\T)), \, \partial_t u \in  L^2\big((t_1, t_2); (W^{1,2}(\T))'\big) \big \},
\end{multline*}
and the latter is known to be continuously embedded into $C([t_1, t_2]; L^2(\T))$ (see, e.g., \cite[Theorem XVIII.2.1]{DL}). Sometimes, we will use the compact notation $C(X)$ and $L^q(X)$.

Finally, let $\mathcal P (\T)$ be the space of Borel probability measures on $\T$, endowed with the Kantorovich-Rubinstein distance (which metricizes the weak-* convergence of measures).

\subsection{Weak solutions to viscous HJ equations}

We will require in the sequel $u$ to be a weak (energy) solution in the following sense.

\begin{defn}\label{wsol} We say that
\begin{itemize}
\item[{\it i)}]  $u$ is a {\it local weak} solution to \eqref{hjb} if for all $0 < s < T$
\begin{gather}
\text{$u \in \H_2^1( \T \times (s, T)) \cap C(\overline Q_T)$, \quad $H(\cdot, Du) \in L^1(s,T; L^\sigma(\T))$ for some $\sigma > 1$, }\label{H12} \\
\text{and $D_p H(\cdot, Du) \in L^{\mathpzc{Q}}(s,T; L^{\mathpzc{P}}(\T))$}\label{LqLp} \\
\text{for some $d \le \sP \le \infty$, and $2 \le \sQ \le \infty$ such that $\frac{d}{2\mathpzc{P}} + \frac{1}{\mathpzc{Q}} \le \frac{1}{2}$, }\label{conditionAS}
\end{gather}
and for all $ 0 < s < \tau \le T$, $\varphi \in \H_2^1( \T \times (s, \tau) ) \cap L^\infty(s, \tau; L^{\sigma'}(\T))$
\begin{equation}\label{weake}
\int_s^\tau \langle \partial_t u(t), \varphi(t) \rangle dt +  \iint_{\T \times (s, \tau)}  \partial_i u \, \partial_j(a_{ij} \varphi) + H(x, Du) \varphi \, dxdt =  \iint_{\T \times (s, \tau)} f \varphi \,dxdt
\end{equation}
(here, $\langle \cdot, \cdot \rangle$ denotes the duality pairing between $(W^{1,2}(\T))'$ and $W^{1,2}(\T)$ ).
\item[{\it ii)}]   $u$ is a {\it global weak} solution if \eqref{H12}-\eqref{LqLp}-\eqref{conditionAS} hold for all $0 \le s < T$, that is, on all $Q_T$ (and therefore, \eqref{weake} is also satisfied up to $s = 0$).
\end{itemize}
\end{defn}


\begin{rem}
Under the growth assumptions (H) on the Hamiltonian, one can easily verify the following implications: if $D_p H(x, Du)$ satisfies \eqref{LqLp}-\eqref{conditionAS} for some $\sP = \sQ \ge d+2$, then \eqref{H12} holds for sure whenever $\gamma > \frac{d+2}{d+1}$. Or, if $D_p H(x, Du)$ satisfies \eqref{LqLp}-\eqref{conditionAS} for $\sQ = \infty$ and some $\sP \ge d$, then \eqref{H12} always holds if $\gamma > \frac{d}{d-1}$.
\end{rem}

\subsection{Well-posedness and regularity of the adjoint equation}

This section is devoted to the analysis of the following Fokker-Planck equation
\begin{equation}\label{fplocal}
\begin{cases}
-\partial_t \rho(x,t)-\sum_{i,j=1}^d\partial_{ij}(a_{ij}(x,t)\rho(x,t))+\dive(b(x,t)\, \rho(x,t))=0&\text{ in }Q_\tau\ ,\\
\rho(x,\tau)=\rho_\tau(x)&\text{ in }\T\ .
\end{cases}
\end{equation}
Note that when the vector field $b(x,t) = - D_pH(x, Du(x,t))$, then \eqref{fplocal} becomes the adjoint equation of the linearization of \eqref{hjb}.

Here, $\tau \in (0,T]$, $Q_\tau:=\T\times (0,\tau)$ and $Q_{s,\tau}:= \T \times (s,\tau)$. For $b \in L^\sQ(s,\tau; L^\sP(\T))$ for all $s > 0$, and for some $\sP \ge d$, $\sQ \ge 2$ satisfying \eqref{conditionAS},
a (weak) solution $\rho \in \H_2^1(Q_{s, \tau})$ is such that $\rho(\tau) = \rho_\tau$ in the $L^2$-sense, and
\begin{equation}\label{wfkp}
 -\int_s^\tau \langle \partial_t\rho(t), \varphi(t) \rangle dt + \iint_{Q_{s,\tau}} \partial_{j}(a_{ij}\rho)\partial_i\varphi  - b \rho \cdot D\varphi \,dxdt = 0
\end{equation}
for all $s > 0$ and $\varphi \in \H_2^1(Q_{s,\tau})$. 

Throughout this section we will assume that
\begin{equation}\label{rhoass}
\rho_\tau\in L^\infty(\T), \quad \rho_\tau\geq0 \ \text{ a.e.}, \quad \text{and} \quad \int_\T \rho_\tau(x)\, dx=1.
\end{equation}
Note that $\rho \in C((0, \tau]; L^2(\T))$, so $\rho \in C((0, \tau]; L^1(\T))$, and
\begin{equation}\label{consrho}
\int_\T \rho(x,t)\, dx=1 \qquad \text{for all $t \in (0, \tau]$.}
\end{equation}
This can be easily verified using $\varphi \equiv 1$ as a test function in \eqref{wfkp} and integrating by parts.

\begin{prop}\label{wellADJ}
Let \eqref{A1} be in force, $b \in L^\sQ(s,\tau; L^\sP(\T))$ for all $s > 0$ and for some $\sP \ge d$, $\sQ \ge 2$ satisfying \eqref{conditionAS}, and $\rho_\tau$ be as in \eqref{rhoass}. Then, there exists a weak solution $\rho$ to \eqref{fplocal}. Moreover, $\rho \in L^{\infty}(s,\tau; L^{\sigma'}(\T))$ for all $1 < \sigma' < \infty$ and $s > 0$, and $\rho$ is a. e. non-negative on $Q_\tau$.
\end{prop}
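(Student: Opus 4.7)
The plan is to reduce to a forward parabolic Cauchy problem via the time-reversal $\tilde\rho(x,s):=\rho(x,\tau-s)$, $\tilde b(x,s):=b(x,\tau-s)$, $\tilde a_{ij}(x,s):=a_{ij}(x,\tau-s)$, which turns \eqref{fplocal} into
\[
\partial_s\tilde\rho - \partial_{ij}(\tilde a_{ij}\tilde\rho) + \dive(\tilde b\,\tilde\rho)=0 \text{ in } Q_\tau, \qquad \tilde\rho(\cdot,0)=\rho_\tau,
\]
and then to build $\tilde\rho$ by approximation: mollify $\rho_\tau$ to strictly positive $\rho_\tau^n\in C^\infty(\T)$ with unit mass, and approximate $\tilde b$ by smooth bounded $\tilde b_n$ with $\tilde b_n\to\tilde b$ in $L^\sQ(0,\tau-\delta;L^\sP(\T))$ for every $\delta>0$ (e.g.\ by truncating near $s=\tau$, the only locus where $\tilde b$ is singular, and convolving in space). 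Classical linear parabolic theory delivers a smooth solution $\tilde\rho_n$ to the regularized problem; non-negativity follows from the weak maximum principle (equivalently, by testing with $-(\tilde\rho_n)^-$), and mass conservation $\int_\T\tilde\rho_n(\cdot,s)\,dx=1$ by testing the equation against $\varphi\equiv 1$.

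The core step is a uniform-in-$n$ bound on $\tilde\rho_n$ in $L^\infty(0,\tau;L^p(\T))$ for every $p<\infty$. For $p\ge 2$, testing with $\tilde\rho_n^{p-1}$, integrating by parts twice in the principal part, and using uniform ellipticity produces the energy inequality
\[
\frac{1}{p}\frac{d}{ds}\|\tilde\rho_n\|_{p}^{p}+\frac{4(p-1)\lambda}{p^{2}}\|\nabla\tilde\rho_n^{p/2}\|_{2}^{2}\le \frac{2(p-1)}{p}\int_\T |\tilde b_n|\,\tilde\rho_n^{p/2}|\nabla\tilde\rho_n^{p/2}|\,dx + \text{l.o.t.},
\]
where the lower order terms arise from $D\tilde a_{ij}$ and are harmless thanks to \eqref{A1}. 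The Aronson-Serrin condition $\frac{d}{2\sP}+\frac{1}{\sQ}\le\frac{1}{2}$ is precisely what allows the drift term to be controlled: by H\"older in space/time combined with the parabolic Gagliardo-Nirenberg-Sobolev embedding applied to $w=\tilde\rho_n^{p/2}$, the right-hand side is bounded by $\varepsilon\|\nabla\tilde\rho_n^{p/2}\|_2^2 + C_\varepsilon \|\tilde b_n\|_{L^\sQ L^\sP}^{\alpha}\,\|\tilde\rho_n\|_{L^\infty L^p}^{p}$ for a suitable $\alpha$. Absorbing the dissipation on the left and invoking Gronwall in time yields the desired $L^\infty(L^p)$ bound, with a constant depending only on $p$, $\|\rho_\tau\|_\infty$, and $\|\tilde b\|_{L^\sQ L^\sP}$. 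As a by-product one also obtains uniform control of $\tilde\rho_n$ in $\H_2^1(Q_{0,\tau-\delta})$ for every $\delta>0$.

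With these estimates in hand, I would extract a subsequence converging weakly in $\H_2^1(Q_{0,\tau-\delta})$ and, by Aubin-Lions, strongly in $L^2(Q_{0,\tau-\delta})$, to some $\tilde\rho$. The linear terms in the weak formulation \eqref{wfkp} pass to the limit trivially; the nonlinear product $\tilde b_n\tilde\rho_n\cdot D\varphi$ passes since $\tilde b_n\to\tilde b$ in $L^\sQ L^\sP$ while $\tilde\rho_n\to\tilde\rho$ is bounded in $L^\infty(L^{p'})$ for $p'$ arbitrarily large. Non-negativity, unit mass and the $L^\infty(L^{\sigma'})$ bounds all transfer to $\tilde\rho$ by lower semicontinuity (and continuity of the mass functional under weak-$*$ convergence on $\T$); reversing time then gives $\rho$. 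The main obstacle I foresee is the careful execution of the $L^p$-testing argument under the borderline Aronson-Serrin hypothesis, since the absorption of the drift into the dissipation hinges on the precise critical scaling of the parabolic embedding and must be carried out without ad hoc restrictions on $\sP$, $\sQ$, or $p$; once this key estimate is in place, the remainder of the scheme is a standard approximation-compactness procedure.
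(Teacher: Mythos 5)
Your proposal is correct and reproduces, in outline, the classical Aronson--Serrin energy-method argument (time reversal, regularization, $L^p$ energy estimates closed via the Gagliardo--Nirenberg embedding under condition \eqref{conditionAS}, Gronwall, then Aubin--Lions compactness) that the paper's own proof simply cites to \cite{AS,LSU,BCCS,BOP,TesiAle} without reproducing. The only cosmetic imprecisions are that the lower-order terms in the double-divergence expansion are controlled by $\|Da\|_\infty$ rather than by \eqref{A1} alone, and the uniform $L^\infty L^p$ bound is really on $(0,\tau-\delta)$ for each $\delta>0$ (constants may degrade as $\delta\to 0$, consistent with the claim ``for all $s>0$'' in the statement), both of which your write-up effectively acknowledges elsewhere.
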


\begin{proof} Existence and regularity of weak solutions to linear equations in divergence form with $b \in L^\sQ(s,\tau; L^\sP(\T))$ is a classical matter that can be found in e.g. \cite{AS, LSU}. Though well known references do not treat directly the periodic setting (but typically the Cauchy-Dirichlet problem), the adaptation of energy methods to $\T$ is straightforward, and can be checked for example following the lines of \cite{BCCS,BOP}. For additional details we refer to \cite{TesiAle}.
\end{proof}

The previous proposition states the well-posedness of the adjoint Fokker-Planck equation for fixed $\rho_\tau \in L^\infty$, and that $\rho(s)$ remains ``almost'' in $L^\infty$ for a.e. $s$. Still, regularity of $\rho$ may deteriorate as $s \to 0$, since the Aronson-Serrin condition on the drift $b$ is not assumed here up to $s = 0$ (see, e.g. \cite[Theorem 4.1]{BCCS}). The main goal is now to derive (weaker) estimates on $\rho$ on the whole $Q_\tau$, that are stable for any $\rho_\tau$ satisfying merely $\|\rho_\tau\|_1 = 1$; one may have in mind that $\rho_\tau$ is an item of a sequence approaching a Dirac delta. These estimates will be unrelated to the Aronson-Serrin condition, and will be achieved using just some information on the integrability of the vector field $b$ with respect to the solution $\rho$ itself, that is a typical datum in the analysis of Hamilton-Jacobi equations.

The following proposition is a modification of \cite[Proposition 2.4]{CT}, and is a kind of parabolic regularity result. The method used here has been inspired by \cite{MPR}, where however estimates are obtained locally in time, and thus are not affected by the regularity of final datum $\rho_\tau$. Similar results for the Sobolev regularity of solutions to Fokker-Planck equations with terminal trace belonging to $L^1$ appeared also in \cite[Proposition 3.10]{Por15} in the sub-quadratic case $\gamma \le 2$, and are compatible with ours.

\begin{prop}\label{estFP}
Let $\rho$ be a (non-negative) weak solution to \eqref{fplocal} and \[1<q'<\frac{d+2}{d+1}.\] Then, there exists $C>0$, depending on $\lambda, \|a\|_{C(W^{1,\infty})},q',d,T$ such that
\begin{equation}\label{estFP1}
\|\rho\|_{\mathcal{H}_{q'}^1(Q_\tau)}\leq C(\|b\rho\|_{L^{q'}(Q_\tau)}+\|\rho\|_{L^{q'}(Q_\tau)}+\|\rho_\tau\|_{L^1(\T)}).
\end{equation}
\end{prop}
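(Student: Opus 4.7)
The plan is to argue by duality, testing the $\rho$-equation against solutions of suitable dual parabolic equations. The condition $q'<(d+2)/(d+1)$, equivalently $q=(q')'>d+2$, is precisely what makes the Sobolev embedding $W^{2,1}_q(Q_\tau)\hookrightarrow C(\overline{Q_\tau})$ available, so that pointwise-in-time traces of the dual solutions can be controlled in $L^\infty$ by $L^q$ norms of the sources.

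\medskip

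\emph{Bound on $\|\rho\|_{L^{q'}}$.} For $\varphi\in C^\infty(\overline{Q_\tau})$, solve the forward dual problem $\partial_t w-a_{ij}\partial_{ij}w=\varphi$ with $w(0)=0$. Classical maximal $L^q$ parabolic regularity gives $\|w\|_{W^{2,1}_q}\le C\|\varphi\|_{L^q}$ and, since $q>d+2$, $\|w\|_{C(\overline{Q_\tau})}\le C\|\varphi\|_{L^q}$; in particular $\|w(s)\|_{L^\infty(\T)}\to 0$ as $s\to 0^+$. Using $w$ as a test function in the weak formulation of $\rho$ on $(s,\tau)$, integrating by parts in $j$ in the spatial term $\iint\partial_j(a_{ij}\rho)\partial_i w$, and applying the time integration-by-parts rule in $\H^1_2$, one would obtain
\[
\iint_{Q_{s,\tau}}\rho\varphi\,dxdt=\int_\T\rho_\tau w(\tau)\,dx-\int_\T\rho(s)w(s)\,dx+\iint_{Q_{s,\tau}}b\rho\cdot Dw\,dxdt.
\]
Since $\|\rho(s)\|_{L^1}=1$ by mass conservation \eqref{consrho} and $\|w(s)\|_{L^\infty}\to 0$, letting $s\to 0^+$ and applying H\"older yields $|\iint\rho\varphi|\le C(\|\rho_\tau\|_{L^1}+\|b\rho\|_{L^{q'}})\|\varphi\|_{L^q}$, and by duality $\|\rho\|_{L^{q'}(Q_\tau)}\le C(\|\rho_\tau\|_{L^1}+\|b\rho\|_{L^{q'}})$.

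\medskip

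\emph{Bounds on $D\rho$ and $\partial_t\rho$.} For the gradient, given $\Phi\in C^\infty(\overline{Q_\tau};\R^d)$, I would take instead a dual equation in divergence form
\[
\partial_t w-\partial_i(a_{ij}\partial_j w)=-\dive\Phi,\qquad w(0)=0,
\]
for which divergence-form maximal regularity gives $\|Dw\|_{L^q}\le C\|\Phi\|_{L^q}$, while Aronson--Serrin type estimates (available for $q>d+2$) produce $\|w\|_{L^\infty(Q_\tau)}\le C\|\Phi\|_{L^q}$ with $w(s)\to 0$ uniformly. Testing the $\rho$-equation with $w$ and the $w$-equation with $\rho$ on $(s,\tau)$ and adding, the leading bilinear terms $\iint a_{ij}\partial_j\rho\,\partial_i w$ cancel by the symmetry of $a_{ij}$, and after the limit $s\to 0^+$ one is left with
\[
\iint_{Q_\tau}\Phi\cdot D\rho=\int_\T\rho_\tau w(\tau)\,dx-\iint\partial_j a_{ij}\,\rho\,\partial_i w\,dxdt+\iint b\rho\cdot Dw\,dxdt.
\]
H\"older applied term by term yields $\|D\rho\|_{L^{q'}}\le C(\|\rho_\tau\|_{L^1}+\|\rho\|_{L^{q'}}+\|b\rho\|_{L^{q'}})$, with the $\|\rho\|_{L^{q'}}$ contribution arising from the lower-order $\partial_j a_{ij}$ term (and hence the $C(W^{1,\infty})$ dependence on $a$). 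Finally, $\|\partial_t\rho\|_{(W^{1,0}_q)'}$ is read directly off the equation by integrating by parts once against $\psi\in W^{1,0}_q$, which bounds it by the already-established $L^{q'}$ norms of $\rho$, $D\rho$ and $b\rho$. Summing the three contributions gives \eqref{estFP1}.

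\medskip

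The main technical obstacle is the passage to the limit $s\to 0^+$: in both identities there appears the boundary term $\int_\T\rho(s)w(s)\,dx$, which must vanish, and since $\rho(s)$ is only known in $L^1$ this forces the uniform convergence $w(s)\to 0$. For the non-divergence dual of the first step this is immediate from $W^{2,1}_q\hookrightarrow C(\overline{Q_\tau})$; for the divergence-form dual of the second step it is more delicate and relies on an Aronson--Serrin / Moser-type $L^\infty$ bound, which is exactly the reason for the hypothesis $q>d+2$, i.e. $q'<(d+2)/(d+1)$.
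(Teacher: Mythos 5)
Your proposal is correct, but the route is genuinely different from the paper's. The paper also works by duality, but it implements the bound on $\|\partial_k\rho\|_{L^{q'}}$ through a \emph{single tailored test function}: it solves the non-divergence auxiliary problem $\partial_t\psi-a_{ij}\partial_{ij}\psi=(\delta+|\partial_k\rho|^2)^{(q'-2)/2}\partial_k\rho$, $\psi(0)=0$, and then tests the $\rho$-equation with $\varphi=\partial_k\psi$; the nonlinear right-hand side is designed so that the leading pairing reproduces $\|\partial_k\rho\|_{L^{q'}}^{q'}$ after letting $\delta\to 0$, and the $W^{2,1}_q\hookrightarrow C([0,\tau];C^1)$ embedding (with $q>d+2$) handles the final-time boundary term. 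The zeroth-order bound $\|\rho\|_{L^{q'}}$ is then recovered not by duality but by the Poincar\'e--Wirtinger inequality together with mass conservation \eqref{consrho}. You instead set up \emph{two separate} clean duality arguments: a non-divergence dual $\partial_tw-a_{ij}\partial_{ij}w=\varphi$ for $\|\rho\|_{L^{q'}}$, and a divergence-form dual $\partial_tw-\partial_i(a_{ij}\partial_jw)=-\dive\Phi$ for $\|D\rho\|_{L^{q'}}$. This is transparent and in fact yields a slightly stronger inequality (you can drop the $\|\rho\|_{L^{q'}}$ term on the right-hand side, since your first step already bounds it by $\|\rho_\tau\|_{L^1}+\|b\rho\|_{L^{q'}}$). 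The trade-off is that your second step relies on ingredients the paper never needs: divergence-form Calder\'on--Zygmund gradient estimates $\|Dw\|_{L^q}\le C\|\Phi\|_{L^q}$ and De Giorgi--Nash--Moser / Aronson--Serrin $L^\infty$ and H\"older-continuity-up-to-$\{t=0\}$ bounds for $w$ with a divergence-form $L^q$ source. These are all available under the standing assumption $a\in C(0,T;W^{1,\infty}(\T))$ (one could even rewrite the divergence-form equation in non-divergence form modulo a first-order drift $\partial_ja_{ij}$ in $L^\infty$), but they would have to be cited or proved as further auxiliary lemmas, whereas the paper's argument is closed using only the non-divergence $W^{2,1}_q$ maximal regularity it already provides in Lemma \ref{parabolicreg}. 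Two minor points worth noting: you should make explicit, as the paper does, that both the maximal regularity constants and the $L^\infty$/H\"older constants can be taken uniform in $\tau\in(0,T]$ (extend $\Phi$ by zero on $(\tau,T)$ and use $w(0)=0$), and, since the weak solution $\rho$ is only known in $\mathcal{H}^1_2(Q_{s,\tau})$ for $s>0$, you should either regularize (as the paper does, assuming everything smooth and passing to the limit at the end) or spell out the integration by parts in the $\H^1_2$ framework on $(s,\tau)$ and then take $s\to 0$, which you sketch but do not fully justify.
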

Note that $C$ here does not depend on $\tau \in (0, T]$.

\begin{proof} We assume that the coefficients $a_{ij}, b_i$ and $\rho_\tau$ are smooth, and therefore $\rho$ is smooth as well on $Q_\tau$. The general case $D a \in L^\infty(Q_\tau)$, $b$ locally in $L^\sQ(L^\sP(\T))$, $\rho_\tau \in L^\infty$ follows by an approximation argument.

Fix $k=1,...,d$. For $\delta > 0$, let $\psi = \psi_\delta$ be the classical solution to
\begin{equation}\label{auxpsi}
\begin{cases}
\partial_t\psi(x,t)-\sum_{i,j} a_{ij}(x,t)\partial_{ij}\psi(x,t)=(\delta + |\partial_{k}\rho(x,t)|^2)^{\frac{q'-2}{2}}\partial_{k} \rho(x,t)&\text{ in }Q_\tau\ ,\\
\psi(x,0)=0&\text{ on }\T\ .
\end{cases}
\end{equation}
Since $q' < 2$, $\delta > 0$ serves as a regularizing perturbation. By standard parabolic regularity (see Lemma \ref{parabolicreg}), we have (for a positive constant not depending on $\tau \le T$)
\begin{equation}\label{w21q}
\norm{\psi}_{W^{2,1}_{q}(Q_\tau)}\leq C\norm{ (\delta + |\partial_{k}\rho|^2)^{\frac{q'-2}{2}}\partial_{k} \rho }_{L^{q}(Q_\tau)}\leq C\norm{|\partial_{k}\rho|^{q'-1}}_{L^{q}(Q_\tau)}=C\norm{\partial_{k}\rho}^{q'-1}_{L^{q'}(Q_\tau)}.
\end{equation}

Set $\varphi(x,t)=\partial_{k}\psi(x,t)$. Then, $\varphi$ is a classical solution to 
\begin{equation}\label{auxphi}
\begin{cases}
\partial_t\varphi-\sum_{i,j} a_{ij}\partial_{ij}\varphi=\partial_k\left[(\delta + |\partial_{k}\rho|^2)^{\frac{q'-2}{2}}\partial_{k} \rho\right] + \sum_{i,j} \partial_k(a_{ij})\partial_{ij}\psi &\text{ in }Q_\tau\ ,\\
\varphi(x,0)=0&\text{ on }\T\ .
\end{cases}
\end{equation}

Using $\varphi$ as a test function for the equation satisfied by $\rho$,
\begin{equation*}
\iint_{Q_\tau}\rho(\partial_t\varphi-a_{ij}\partial_{ij}\varphi-b \cdot D\varphi) dxdt=\int_{\T}\rho_\tau(x)\varphi(x,\tau)dx\ ,
\end{equation*}
and using the equation in \eqref{auxphi} satisfied by $\varphi$ we get, after integration by parts
\[
 \iint_{Q_\tau} (\delta + |\partial_{k}\rho|^2)^{\frac{q'-2}{2}}|\partial_{k} \rho|^2 -  \partial_k(a_{ij})\partial_{ij}\psi \, \rho + b \rho \cdot D\varphi\,  dxdt= - \int_{\T}\rho_\tau(x)\varphi(x,\tau)dx\ ,
\]
Applying H\"older's inequality,
\begin{multline*}
 \iint_{Q_\tau} (\delta + |\partial_{k}\rho|^2)^{\frac{q'-2}{2}}|\partial_{k} \rho|^2 \,dxdt \le \| Da \|_{L^\infty(Q_\tau)} \| \psi \|_{W^{2,1}_{q}(Q_\tau)} \|\rho\|_{L^{q'}(Q_\tau)} \\ + \|b \rho \|_{L^{q'}(Q_\tau)} \|D\varphi \|_{L^{q}(Q_\tau)} + \| \rho_\tau\|_{L^1(\T)}\|\varphi(\cdot,\tau)\|_{{\infty}}.
\end{multline*}
Since $q>d+2$, by \cite[Lemma II.3.3]{LSU} (see also \cite{CiGo,CT}), the parabolic space $W^{2,1}_{q}(Q_\tau)$ is continuously embedded into $C([0,\tau]; C^1(\T))$, therefore $\|\varphi(\cdot,\tau)\|_{{\infty}} \le \|\psi(\cdot, \tau)\|_{C^1(\T)} \le C\|\psi\|_{W^{2,1}_{q}(Q_\tau)}$ (to be sure that $C$ does not explode as $\tau \to 0$, one has to exploit that $\psi(0)=0$, and argue as in the proof of Proposition \ref{embedding2}). Hence, since $\varphi=\partial_{k}\psi$,
\[
\iint_{Q_\tau} (\delta + |\partial_{k}\rho|^2)^{\frac{q'-2}{2}}|\partial_{k} \rho|^2 \,dxdt \le C( \|\rho\|_{L^{q'}(Q_\tau)} + \|b \rho \|_{L^{q'}(Q_\tau)}+ \| \rho_\tau\|_{L^1(\T)})  \| \psi \|_{W^{2,1}_{q}(Q_\tau)}.
\]
By \eqref{w21q}, letting $\delta \to 0$,
\[
\iint_{Q_\tau}|\partial_{k}\rho|^{q'} \,dxdt \le C( \|\rho\|_{L^{q'}(Q_\tau)} + \|b \rho \|_{L^{q'}(Q_\tau)}+ \| \rho_\tau\|_{L^1(\T)}) \|\partial_{k}\rho\|^{q'-1}_{L^{q'}(Q_\tau)}.
\]
Summarizing, we conclude
\begin{equation}\label{PW}
\norm{D\rho}_{L^{q'}(Q_\tau)}\leq C( \|\rho\|_{L^{q'}(Q_\tau)} + \|b \rho \|_{L^{q'}(Q_\tau)}+ \| \rho_\tau\|_{L^1(\T)})\ .
\end{equation}
By Poincar\'e-Wirtinger inequality, together with the fact that $\int_{\T}\rho(x,t)dx=1$ for all $t \in [0, \tau]$, we obtain
\begin{equation*}
\norm{\rho}^{q'}_{L^{q'}(Q_\tau)}\leq C(\norm{D\rho}^{q'}_{L^{q'}(Q_\tau)} + \tau\|\rho_\tau\|^{q'}_{L^1(\T)})\ ,
\end{equation*}
yielding, together with \eqref{PW}
\[
\norm{\rho}_{W^{1,0}_{q'}(Q_\tau)}\leq C( \|\rho\|_{L^{q'}(Q_\tau)} + \|b \rho \|_{L^{q'}(Q_\tau)}+ \| \rho_\tau\|_{L^1(\T)}).
\]
Finally, for any smooth test function $\varphi$ (which may not vanish at the terminal time $\tau$), again by H\"older's inequality
\begin{multline*}
 \left | \int_0^\tau \langle \partial_t\rho(t), \varphi(t) \rangle dt  \right| \le  \iint_{Q_\tau} |\partial_{j}(a_{ij}\rho)\partial_i\varphi|  + |b \rho |\,| D\varphi|\,dxdt \\
 \le \big[(\|a\|_{L^{\infty}(Q_\tau)} + \|Da\|_{L^{\infty}(Q_\tau)}) \norm{\rho}_{W^{1,0}_{q'}(Q_\tau)} + \|b \rho \|_{L^{q'}(Q_\tau)}\big] \|D \varphi \|_{L^{q}(Q_\tau)}.
\end{multline*}
Thus,
\begin{equation*}
\norm{\partial_t \rho}_{(W^{1,q}(Q_\tau))'}\leq C( \|\rho\|_{L^{q'}(Q_\tau)} + \|b \rho \|_{L^{q'}(Q_\tau)}+ \| \rho_\tau\|_{L^1(\T)})\ .
\end{equation*}
\end{proof}

\begin{prop}\label{estFP2}
Let $\rho$ be the (non-negative) weak solution to \eqref{fplocal} and \[1<q'<\frac{d+2}{d+1}.\] Then, there exists $C>0$, depending on $\lambda, \|a\|_{C(W^{1,\infty})},T,q',d$ such that
\begin{equation}\label{estFP3}
\|\rho\|_{\mathcal{H}_{q'}^1(Q_\tau)}\leq C\left(\iint_{Q_\tau} |b(x,t)|^{r'} \rho(x,t) \, dxdt  + 1\right),
\end{equation}
where
\begin{equation}\label{rdq1}
r' = 1 + \frac{d+2}{q}.
\end{equation}
\end{prop}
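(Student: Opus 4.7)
The plan is to upgrade the estimate from Proposition \ref{estFP} by replacing the ``linear'' norm $\|b\rho\|_{L^{q'}(Q_\tau)}$ on its right-hand side with the $\rho$-weighted integral $M := \iint_{Q_\tau} |b|^{r'}\rho\,dxdt$ that appears in \eqref{estFP3}. The key algebraic observation is the pointwise factorisation
\[
|b|^{q'}\rho^{q'} = \bigl(|b|^{r'}\rho\bigr)^{q'/r'}\,\rho^{q'(r'-1)/r'},
\]
in which the particular choice $r' = 1 + (d+2)/q$ is tuned precisely so that, after a Hölder step, the $\rho$-factor will carry the natural parabolic Sobolev exponent of $\mathcal{H}^1_{q'}$. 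A short computation using $1/q + 1/q' = 1$ together with $q > d+2$ shows $1 < q' < r'$, so Hölder's inequality with conjugate exponents $r'/q'$ and $r'/(r'-q')$ is legitimate and yields
\[
\|b\rho\|_{L^{q'}(Q_\tau)} \leq M^{1/r'}\,\|\rho\|_{L^{s}(Q_\tau)}^{(r'-1)/r'}, \qquad s := \frac{q'(r'-1)}{r'-q'} = \frac{q'(d+2)}{d+2-q'}.
\]

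Next I would invoke the parabolic Sobolev embedding $\mathcal{H}^1_{q'}(Q_\tau) \hookrightarrow L^s(Q_\tau)$ with embedding constant independent of $\tau \in (0,T]$: this is the step I expect to be the main technical obstacle, because $s$ is the Sobolev conjugate that corresponds to ``gaining one spatial derivative'' with respect to the parabolic dimension $d+2$, and one has to combine the integrability coming from $L^{q'}(0,\tau;W^{1,q'}(\T))$ with the dual time-regularity in a way that remains stable as $\tau \to 0^+$ (presumably in the spirit of the argument for Proposition \ref{embedding2} already alluded to in the proof of Proposition \ref{estFP}). The remaining $\|\rho\|_{L^{q'}(Q_\tau)}$ in the estimate of Proposition \ref{estFP} is handled by interpolating between $L^1$ and $L^s$: the mass-conservation identity \eqref{consrho} gives $\|\rho\|_{L^1(Q_\tau)} = \tau \leq T$, so
\[
\|\rho\|_{L^{q'}(Q_\tau)} \leq \|\rho\|_{L^1(Q_\tau)}^{\theta}\,\|\rho\|_{L^s(Q_\tau)}^{1-\theta} \leq T^{\theta}\,\|\rho\|_{L^s(Q_\tau)}^{1-\theta}
\]
for the unique $\theta \in (0,1)$ determined by $1/q' = \theta + (1-\theta)/s$.

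Plugging both bounds into Proposition \ref{estFP}, and using $\|\rho_\tau\|_{L^1} = 1$ from \eqref{rhoass}, I obtain an inequality of the form
\[
\|\rho\|_{\mathcal{H}^1_{q'}(Q_\tau)} \leq C\Bigl[M^{1/r'}\,\|\rho\|_{\mathcal{H}^1_{q'}(Q_\tau)}^{(r'-1)/r'} + T^{\theta}\,\|\rho\|_{\mathcal{H}^1_{q'}(Q_\tau)}^{1-\theta} + 1\Bigr].
\]
Since both exponents $(r'-1)/r'$ and $1-\theta$ are strictly less than one, two applications of Young's inequality allow me to absorb a small multiple of $\|\rho\|_{\mathcal{H}^1_{q'}(Q_\tau)}$ into the left-hand side, producing the desired bound $\|\rho\|_{\mathcal{H}^1_{q'}(Q_\tau)} \leq C(M+1)$, with $C$ depending only on $\lambda$, $\|a\|_{C(W^{1,\infty})}$, $T$, $q'$ and $d$.
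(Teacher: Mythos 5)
Your proposal is correct and follows essentially the same path as the paper's proof: both start from Proposition \ref{estFP}, factor the mixed term $\|b\rho\|_{L^{q'}}$ by H\"older using the same exponent $r'/q'$ so that the residual $\rho$-factor lands in $L^{p}$ with $1/p = 1/q' - 1/(d+2)$ (your $s$ equals the paper's $p$), interpolate the $\|\rho\|_{L^{q'}}$ term between $L^1$ and $L^{p}$ using mass conservation, invoke the $\tau$-uniform embedding of Proposition \ref{embedding2} --- which is indeed exactly the technical point you flagged --- and close by absorbing a small multiple of $\|\rho\|_{\mathcal{H}^1_{q'}}$ via Young's inequality. The only cosmetic difference is that the paper applies Young's inequality before the embedding (keeping an $\eps\|\rho\|_{L^p}$ term explicitly) whereas you substitute the embedding first and apply Young's at the end; the two are interchangeable.
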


\begin{proof}
Inequality \eqref{estFP1}, \eqref{rhoass} and the generalized H\"older's inequality yield
\begin{multline}\label{eqqq0}
\|\rho\|_{\mathcal{H}_{q'}^1(Q_\tau)}\leq C(\|b\rho^{1/r'}\rho^{1/r}\|_{L^{q'}(Q_\tau)}+\|\rho\|_{L^{q'}(Q_\tau)}+1) \\
\le C\left(\left(\iint_{Q_\tau} |b|^{r'} \rho \, dxdt\right)^{1/{r'}} \|\rho\|^{1/r}_{L^{p}(Q_\tau)}+\|\rho\|_{L^{q'}(Q_\tau)}+1\right),
\end{multline}
for $p > q'$ satisfying
\begin{equation}\label{rdq2}
\frac{1}{q'}= \frac{1}{r'} + \frac{1}{rp}.
\end{equation}
Then, by Young's inequality, for all $\eps > 0$
\begin{equation}\label{eqqq1}
\|\rho\|_{\mathcal{H}_{q'}^1(Q_\tau)} \le C\left( \frac{1}{\eps }\iint_{Q_\tau} |b|^{r'} \rho \, dxdt +  \eps\|\rho\|_{L^{p}(Q_\tau)}+\|\rho\|_{L^{q'}(Q_\tau)}+1\right),
\end{equation}
Since $\|\rho\|_{L^1(Q_\tau)} = \tau$, by interpolation between $L^1(Q_\tau)$ and $L^p(Q_\tau)$ we have $\|\rho\|_{L^{q'}(Q_\tau)} \le \tau^{1/{r'}}  \|\rho\|^{1/r}_{L^{p}(Q_\tau)}$, and again by Young's inequality
\begin{equation}\label{eqqq2}
\|\rho\|_{\mathcal{H}_{q'}^1(Q_\tau)} \le \widetilde C\left( \frac{1}{\eps }\iint_{Q_\tau} |b|^{r'} \rho \, dxdt +  \eps\|\rho\|_{L^{p}(Q_\tau)}+1\right),
\end{equation}
One can verify that \eqref{rdq1} and \eqref{rdq2} yield
\[
\frac1p = \frac{1}{q'} - \frac{1}{d+2}.
\]
Indeed, by \eqref{rdq2} we have
\[
\frac1p=\frac{r}{q'}-\frac{r}{r'}=\frac{1}{q'}-\frac{r-1}{q}
\]
and one concludes immediately by using \eqref{rdq1} on the right-hand side of the above equality. The continuous embedding of $\mathcal{H}_{q'}^1(Q_\tau)$ in $L^{p}(Q_\tau)$ stated in Proposition \ref{embedding2} then implies
\[
 \|\rho\|_{L^{p}(Q_\tau)}  \le C_1 \big( \|\rho\|_{\mathcal{H}_{q'}^1(Q_\tau)} + \tau \big)\,.
\]
Hence, the term $ \eps\|\rho\|_{L^{p}(Q_\tau)} $ can be absorbed by the left hand side of \eqref{eqqq2} by choosing $\eps = (2\widetilde C C_1)^{-1}$, thus providing the assertion.
\end{proof}

\section{Lipschitz regularity}\label{slip}

This section is devoted to the proof of Lipschitz regularity of $u$, stated in Theorem \ref{LipReg}. We will assume that the assumptions of Theorem \ref{LipReg} are in force: $a_{ij} \in C(0, T; W^{2,\infty}(\T))$ and satisfies \eqref{A1}, $H \in C^1(\T\times\R^d)$, it is convex in the second variable, and satisfies \eqref{H} and $u_0 \in C(\T)$. Moreover, $f \in L^q(Q_T)$ for some $q > d+2$. 
At a certain stage we will require $q \ge \frac{d+2}{\gamma'-1}$ also.

The result will be obtained using regularity properties of the adjoint variable $\rho$, i.e. the solution to
\begin{equation}\label{fplocaladjoint}
\begin{cases}
\ds -\partial_t\rho(x,t)-\sum_{i,j=1}^d\partial_{ij}(a_{ij}(x,t) \rho(x,t))-\dive\big(D_pH(x,Du(x,t))\, \rho(x,t)\big)=0&\text{ in }Q_\tau\ ,\\
\rho(x,\tau)=\rho_\tau(x)&\text{ on }\T\ ,
\end{cases}
\end{equation}
for $\tau \in (0, T)$, $\rho_\tau\in C^{\infty}(\T)$, $\rho_\tau\geq0$ with $\|\rho_\tau\|_{L^1(\T)}=1$. Recall that $u$ is a weak solution to the viscous Hamilton-Jacobi equation \eqref{hjb}. By the integrability assumptions on $D_p H$, the adjoint state $\rho \in \H_2^1(Q_{s, \tau})$ for all $s > 0$ is, for any $\rho_\tau$, well-defined, non-negative and bounded in $L^\infty(s,\tau; L^{\sigma'}(\T))$ for all $\sigma' > 1$, by a straightforward application of Proposition \ref{wellADJ}.

In what follows, we establish bounds on $\rho$ on the whole $Q_\tau$ that are independent on the choice of $\tau$ and $\rho_\tau \ge 0$ satisfying $\|\rho_\tau\|_{L^1(\T)}=1$.

Before we start, recall that the Lagrangian $L : \T \times \R^d \to \R$, $L(x, \nu) := \sup_{p} \{p \cdot \nu - H(x, p)\}$, namely the Legendre transform of $H$ in the $p$-variable, is well defined by the superlinear character of $H(x, \cdot)$. Moreover, by convexity of $H(x, \cdot)$,
\[
H(x, p) = \sup_{\nu \in \R^d} \{\nu \cdot p - L(x, \nu)\},
\]
and 
\begin{equation}\label{HL}
H(x, p) = \nu \cdot p - L(x, \nu) \quad \text{if and only if} \quad \nu = D_p H(x, p).
\end{equation}
The following properties of $L$ are standard (see, e.g. \cite{CS}): for some $C_L > 0$,
\begin{align}
\tag{L1}\label{L1} & C_L^{-1}|\nu|^{\gamma'} - C_L \le L(x, \nu) \le C_L |\nu|^{\gamma'}\\
\tag{L2}\label{L2} & |D_x L(x, \nu)| \le C_L (|\nu|^{\gamma'} + 1).
\end{align}
for all $\nu \in \R^d$.

\subsection{Estimates on the adjoint variable $\rho$}

Let us point out first that from now on we will denote by $C, C_1, ...$ positive constants that may depend on the data (e.g. $\lambda$, $C_H$, $\norm{u_0}_{C(\T)}$, ...), but do not depend on $\tau$, $\rho_\tau$.

We first start with a duality identity involving $u, \rho$.

\begin{lemma}\label{representationformulaHJ}
Let $u$ be a local weak solution to \eqref{hjb}. Assume that $\rho$ is a weak solution to \eqref{fplocaladjoint}. Then, for all $s \in (0, \tau)$
\begin{equation}\label{reprformula1}
\int_{\T}u(x,\tau)\rho_{\tau}(x)dx=\int_{\T}u(x,s)\rho(x,s)dx +\iint_{Q_{s,\tau}}L(x, D_pH(x,Du))\rho dxdt+\iint_{Q_{s,\tau}} f\rho\,  dxdt.
\end{equation}
Moreover, if $u$ is a global weak solution, the previous identity holds up to $s = 0$.
\end{lemma}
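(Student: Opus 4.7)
The plan is to realize the identity as a duality computation: use $\rho$ as a test function in the weak formulation of the HJ equation, $u$ as a test function in the weak formulation of the Fokker-Planck equation, subtract, and invoke the Fenchel identity \eqref{HL} to collapse the Hamiltonian terms into $L(x, D_pH(x,Du))$.

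First I would check admissibility of the test functions. For $s>0$, Definition \ref{wsol} gives $u \in \H^1_2(Q_{s,\tau})$, so $u$ is admissible in \eqref{wfkp}. Conversely, Proposition \ref{wellADJ} provides $\rho \in \H^1_2(Q_{s,\tau}) \cap L^\infty(s,\tau; L^{\sigma'}(\T))$ for any $\sigma'>1$, so $\rho$ qualifies as a test function in \eqref{weake} (the prescribed $\sigma$ is the one from \eqref{H12}). Integrability of the nonlinear terms is then immediate: $H(\cdot,Du)\rho \in L^1$ by H\"older with exponents $(\sigma,\sigma')$; $f\rho \in L^1$ since $f \in L^q$ and $\rho$ is bounded in sufficiently high Lebesgue spaces by Proposition \ref{wellADJ}; and $D_pH(x,Du)\cdot Du\,\rho$ (as well as $L(x,D_pH(x,Du))\rho$) lies in $L^1$ because, by \eqref{H} and \eqref{L1}, it is pointwise controlled by $C(|Du|^\gamma+1)\rho \le C(H(x,Du)+1)\rho$.

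Next, I plug $\varphi=\rho$ into \eqref{weake} and $\varphi=u$ into \eqref{wfkp} with $b=-D_pH(x,Du)$, obtaining
\begin{align*}
\int_s^\tau \langle \partial_t u,\rho\rangle\, dt + \iint_{Q_{s,\tau}} \partial_i u\,\partial_j(a_{ij}\rho)\, dxdt + \iint_{Q_{s,\tau}} H(x,Du)\rho\, dxdt &= \iint_{Q_{s,\tau}} f\rho\, dxdt, \\
-\int_s^\tau \langle \partial_t \rho,u\rangle\, dt + \iint_{Q_{s,\tau}} \partial_j(a_{ij}\rho)\,\partial_i u\, dxdt + \iint_{Q_{s,\tau}} D_pH(x,Du)\cdot Du\,\rho\, dxdt &= 0.
\end{align*}
Subtracting the second from the first cancels the symmetric diffusion term. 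The combined time pairing
$\int_s^\tau [\langle \partial_t u,\rho\rangle + \langle \partial_t \rho,u\rangle]\, dt$
is the standard product rule for $\H^1_2$ functions (both $u$ and $\rho$ lie in $C([s,\tau];L^2(\T))$ by the embedding recalled in Section~\ref{basic}), giving $\int_\T u(\tau)\rho_\tau - \int_\T u(s)\rho(s)\, dx$. Applying \eqref{HL} to rewrite $D_pH(x,Du)\cdot Du - H(x,Du) = L(x,D_pH(x,Du))$ then yields \eqref{reprformula1}.

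For the global case ($u_0 \in W^{1,\infty}$, $u$ global weak solution), by Definition \ref{wsol}(ii) the admissibility conditions hold on all of $Q_\tau$, so the same argument applied directly with $s=0$ establishes the identity up to the initial time. The main technical point to be careful with is the product rule in time in the $\H^1_2$ duality pairing; I would cite a standard reference (e.g. Dautray--Lions, Vol.~5, Ch.~XVIII) rather than reprove it. The only arithmetic subtlety is the matching of signs in the second-order term of the two weak formulations, which works cleanly thanks to the symmetry $a_{ij}=a_{ji}$ and the choice of form $\partial_i u\,\partial_j(a_{ij}\rho)$ used in both \eqref{weake} and \eqref{wfkp}.
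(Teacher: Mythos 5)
Your proof is correct and is essentially the same duality computation as the paper's: test the HJ equation with $\pm\rho$, test the Fokker--Planck equation with $u$, combine so the symmetric second-order terms cancel, collapse the time pairings to boundary terms via the product rule in $\mathcal{H}^1_2$, and apply the Fenchel identity \eqref{HL}. The paper tests with $-\rho$ and sums whereas you test with $\rho$ and subtract, which is only a cosmetic difference; your admissibility and integrability checks are also in line with the paper's brief remark at the end of its proof.
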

\begin{proof}
Using $-\rho\in \H_2^1(Q_{s,\tau}) \cap L^\infty(s,\tau; L^{\sigma'}(\T))$ as a test function in the weak formulation of problem \eqref{hjb}, $u\in \H_2^1(Q_{s,\tau})$ as a test function for the corresponding adjoint equation \eqref{fplocaladjoint} and summing both expressions, one obtains
\begin{multline*}
-\int_s^\tau \langle \partial_t u(t), \rho(t) \rangle dt -\int_s^\tau \langle \partial_t\rho(t), u(t) \rangle dt \\ + \iint_{Q_{s,\tau}}(D_pH(x,Du)\cdot Du- H(x,Du))\rho dxdt+\iint_{Q_{s,\tau}} f\rho\,  dxdt=0\ .
\end{multline*}
The desired equality follows after integrating by parts in time and using property \eqref{HL} of $L$. Note that since $H(x, Du) \in L^1(s,T; L^\sigma(\T))$, then $L(x, D_pH(Du)) \in L^1(s,T; L^\sigma(\T))$ by \eqref{L1} and \eqref{H}, so all the terms in \eqref{reprformula1} make sense.

The same argument can be used with $s = 0$ in the case that $u$ is a global weak solution.
\end{proof}

We are now ready to prove a crucial estimate on the the integrability of $D_pH$ with respect to $\rho$, that depends in particular on the sup norm $\|u\|_{C(\overline Q_T)}$. Note that this estimate is obtained on the whole parabolic cylinder $Q_\tau$.
\begin{prop}\label{EstimateK}
Let $u$ be a local weak solution to \eqref{hjb} and $\rho$ be a weak solution to \eqref{fplocaladjoint}. Then, there exist positive constants ${C}$ (depending on $\lambda, \|a\|_{C(W^{1,\infty})}$, $\|u\|_{C(\overline Q_T)}$, $C_H$, $\norm{f}_{L^{q}(Q_T)}$, $q, d, T$) such that
\begin{equation}\label{estik}
\iint_{Q_\tau}|D_pH(x,Du(x,t))|^{\gamma'}\rho(x,t) \, dxdt \leq {C} .\
\end{equation}
\end{prop}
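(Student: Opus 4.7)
The plan is to extract the desired bound from the duality identity of Lemma~\ref{representationformulaHJ}, lower-bounding the Lagrangian via \eqref{L1} and absorbing the source term $\iint f\rho$ via the Fokker--Planck estimate of Proposition~\ref{estFP2}.

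First, applying \eqref{reprformula1} on $Q_{s,\tau}$ for $s\in(0,\tau)$, using the coercivity $L(x,\nu)\ge C_L^{-1}|\nu|^{\gamma'}-C_L$ and the mass conservation $\|\rho(t)\|_{L^1(\T)}=1$, one obtains
$$C_L^{-1}\iint_{Q_{s,\tau}}|D_pH(x,Du)|^{\gamma'}\rho\,dxdt\le 2\|u\|_{C(\overline{Q_T})}+C_L T+\left|\iint_{Q_\tau}f\rho\,dxdt\right|.$$
Monotone convergence as $s\to 0^+$ on the left (and dominated convergence on the $f$-term, which is in $L^1(Q_\tau)$ since $\rho\in L^{q'}(Q_\tau)$) converts this into the same inequality for $\Ecal:=\iint_{Q_\tau}|D_pH(x,Du)|^{\gamma'}\rho\,dxdt$, so the crux is controlling the $f$-integral.

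Second, by Hölder $|\iint_{Q_\tau}f\rho|\le \|f\|_{L^q(Q_T)}\|\rho\|_{L^{q'}(Q_\tau)}$, and Proposition~\ref{estFP2} (applied with drift $b=-D_pH(x,Du)$, which satisfies the Aronson--Serrin condition by Definition~\ref{wsol}) yields
$$\|\rho\|_{L^{q'}(Q_\tau)}\le C\Big(\iint_{Q_\tau}|D_pH|^{r'}\rho\,dxdt+1\Big),\qquad r'=1+\tfrac{d+2}{q}.$$
The hypothesis $q\ge (d+2)/(\gamma'-1)$ is precisely what guarantees $r'\le \gamma'$. Assuming strict inequality, Hölder with exponents $\gamma'/r'$ and its conjugate, together with $\int_{Q_\tau}\rho\,dxdt=\tau\le T$, gives $\iint|D_pH|^{r'}\rho\le T^{1-r'/\gamma'}\Ecal^{r'/\gamma'}$. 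Combining with step one yields $\Ecal\le C_1+C_2\|f\|_{L^q}\Ecal^{r'/\gamma'}$, and since $r'/\gamma'<1$, Young's inequality absorbs the nonlinear term into the left-hand side and closes the estimate.

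The main difficulty I anticipate is the borderline case $r'=\gamma'$, which can arise only for $\gamma>2$ when $q=(d+2)/(\gamma'-1)$. There, direct Young absorption is unavailable; one must either slightly improve Proposition~\ref{estFP2} via an additional interpolation parameter (so that a strictly subcritical exponent can be used at the previous step), or perform the estimate on a short time slab $Q_{s,s+\delta}$ where $\int_{Q_{s,s+\delta}}\rho\,dxdt=\delta$ is small enough to allow absorption, then iterate the bound over finitely many slabs covering $(0,T)$.
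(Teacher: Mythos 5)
Your overall strategy matches the paper's: invoke the duality identity \eqref{reprformula1}, lower-bound $L$ via \eqref{L1}, control $\iint f\rho$ by H\"older, feed $\|\rho\|$ back through Proposition~\ref{estFP2}, and close by Young's inequality. The structure is sound, and you correctly diagnose the sticking point: applying H\"older directly with exponent $q$ leads to $r' = 1 + (d+2)/q$, which hits the borderline $r' = \gamma'$ exactly when $q = (d+2)/(\gamma'-1)$, where Young's absorption fails.

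The paper sidesteps this entirely by never using the full integrability of $f$. Instead, it fixes an auxiliary exponent $\eta$ with $\max\{(d+2)/\gamma', (d+2)/2\} < \eta < d+2 \;(< q)$ and applies H\"older as $|\iint f\rho| \le \|f\|_{L^\eta}\|\rho\|_{L^{\eta'}}$. Since $\eta < q$ and $Q_T$ has finite measure, $\|f\|_{L^\eta} \lesssim \|f\|_{L^q}$, so nothing is lost; but now one embeds $\mathcal{H}^1_{\bar q'} \hookrightarrow L^{\eta'}$ with $1/\eta' = 1/\bar q' - 1/(d+2)$, which gives $\bar q > q$, hence a \emph{strictly} subcritical $r' = (d+2)/\eta < \gamma'$. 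In other words, one trades some integrability of $f$ (which is freely available) for a better exponent $r'$ on the Fokker--Planck side. This is in the spirit of your ``slightly improve Proposition~\ref{estFP2} via interpolation'' suggestion, but it requires no improvement of that proposition at all — just apply it with a larger $\bar q$. A further consequence you missed: Proposition~\ref{EstimateK} actually needs only $q > d+2$, not $q \ge (d+2)/(\gamma'-1)$; the latter condition only enters in Corollary~\ref{EstimateK1}. Your time-slab alternative would also work since $\|f\|_{L^q(Q_{s,s+\delta})} \to 0$ as $\delta \to 0$ and $\int u(s)\rho(s) \le \|u\|_{C(\overline{Q_T})}$ controls the inter-slab transfer, but it is needlessly heavy. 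Finally, a minor remark: your ``dominated convergence'' justification for passing $s\to 0$ invokes $\rho \in L^{q'}(Q_\tau)$ up to $s=0$, which is part of what one is trying to establish; it is cleaner to absorb on each $Q_{s,\tau}$ with constants uniform in $s$, then let $s \to 0$ by monotone convergence, as the paper does.
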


\begin{rem} Note that as a straightforward consequence of \eqref{estik}, one has 
\begin{equation}\label{estik1}
\iint_{Q_\tau}|Du(x,t)|^{\beta}\rho(x,t) \, dxdt \leq {C_\beta}\ \qquad \text{for all $1 \le \beta \le \gamma$}.
\end{equation}
Indeed, by \eqref{H}, 
$\iint_{Q_\tau}|Du(x,t)|^{\gamma}\rho(x,t) \, dxdt \leq {C}$, which yields \eqref{estik1} for $\beta = \gamma$. For $\beta < \gamma$ it is sufficient to use Young's inequality and \eqref{consrho}.
\end{rem}

\begin{proof}
Rearrange the representation formula \eqref{reprformula1} to get, for $s\in(0,\tau)$,
\begin{multline}\label{ineqK1}
\iint_{Q_{s,\tau}}L(x, D_pH(x,Du))\rho \, dxdt = \int_{\T}u(x,\tau)\rho_{\tau}(x)dx - \int_{\T}u(x,s)\rho(x,s)dx - \iint_{Q_{s,\tau}} f\rho\,  dxdt.
\end{multline}
Fix some $\eta$ such that $(d+2)/\gamma' < \eta < d+2 \ (< q)$. Use now bounds on the Lagrangian \eqref{L1}, and H\"older's inequality to obtain
\begin{multline}\label{ineqK2}
C_L^{-1} \iint_{Q_{s,\tau}}|D_pH(x,Du)|^{\gamma'}\rho \, dxdt \le \iint_{Q_{s,\tau}}L(x, D_pH(x,Du))\rho \, dxdt \\ \le 2\|u\|_{C(\overline Q_T)} + \|f\|_{L^\eta(Q_{s,\tau})}\|\rho\|_{L^{\eta'}(Q_{s,\tau})} .
\end{multline}

Let now $\bar q$ be such that 
\[
\frac{1}{\eta'} = \frac{1}{\bar q'} - \frac{1}{d+2}
\]
By Proposition \ref{embedding2}, $\mathcal{H}_{\bar q'}^1(Q_{s,\tau})$ is continuously embedded in $L^{\eta'}(Q_{s,\tau})$. Moreover, choosing $\eta>(d+2)/2$ guarantees $\bar q' < (d+2)/(d+1)$, so by inequality \eqref{estFP3} (with $q$ replaced by $\bar q$),
\begin{equation}\label{ineqK3}
\|\rho\|_{L^{\eta'}(Q_{s,\tau})}\leq C( \|\rho\|_{\mathcal{H}_{\bar q'}^1(Q_{s,\tau})}  + 1) \leq C_1\left(\iint_{Q_{s,\tau}} |D_pH(x,Du)|^{r'} \rho(x,t) \, dxdt + 1\right),
\end{equation}
where $r' = 1 + \frac{d+2}{\bar q}$. Plugging this inequality into \eqref{ineqK2}, we obtain
\begin{multline*}
C_L^{-1} \iint_{Q_{s,\tau}}|D_pH(x,Du)|^{\gamma'}\rho \, dxdt \\\leq  2\|u\|_{C(\overline Q_T)} + C_1 \|f\|_{L^\eta(Q_{s,\tau})} \left(\iint_{Q_{s,\tau}} |D_pH(x,Du)|^{r'} \rho(x,t) \, dxdt + 1\right)
\end{multline*}

Finally, the right hand side can be absorbed in the left hand side whenever $r' < \gamma'$ by Young's inequality: this is assured by 
\[
r' = 1 + \frac{d+2}{\bar q} = \frac{d+2}{\eta} < \gamma'.
\]
One then gets \eqref{estik} by taking the limit $s \to 0$ (constants here remain bounded for $s \in (0, \tau)$).

\end{proof}

Integrability of $D_pH$ with respect to $\rho$ provides finally $L^{q'}$ regularity of $D \rho$. From now on, we will suppose that $q > d+2$ and $q \ge \frac{d+2}{\gamma'-1}$. 

\begin{cor}\label{EstimateK1}
Let $u$ be a local weak solution to \eqref{hjb} and $\rho$ be a weak solution to \eqref{fplocaladjoint}. Let $\bar q$ be such that
\[
\bar q > {d+2} \quad \text{and} \quad \bar q \ge \frac{d+2}{\gamma'-1}.
\]
Then, there exists a positive constant ${C}$ such that
\begin{equation*}
\|\rho\|_{\mathcal{H}_{\bar q'}^1(Q_\tau)}\leq C\ ,
\end{equation*}
where ${C}$ depends in particular on $\lambda, \|a\|_{C(W^{1,\infty})}, C_H$, $\|u\|_{C(\overline Q_T)}$, $\norm{f}_{L^{q}(Q_\tau)}$, $\bar q , d, T$ (but not on $\tau, \rho_\tau$).
\end{cor}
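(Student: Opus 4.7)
The plan is to combine the two key estimates already established: the weighted integrability of $D_pH$ with respect to $\rho$ (Proposition \ref{EstimateK}) and the parabolic-regularity estimate on $\rho$ (Proposition \ref{estFP2}). With $b = -D_p H(\cdot, Du)$, we want to show that the right-hand side $\iint_{Q_\tau} |b|^{r'} \rho \, dxdt$ appearing in \eqref{estFP3} (applied with $q$ replaced by $\bar q$) is controlled by a constant independent of $\tau$ and $\rho_\tau$.

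First, I would check that the two hypotheses on $\bar q$ are precisely what is needed:
\begin{itemize}
\item $\bar q > d+2$ is equivalent to $\bar q' < (d+2)/(d+1)$, which is the range required to invoke Proposition \ref{estFP2};
\item $\bar q \ge (d+2)/(\gamma'-1)$ is equivalent to $r' := 1 + (d+2)/\bar q \le \gamma'$, which is what allows us to dominate $|b|^{r'}$ by $|b|^{\gamma'}$ weighted by $\rho$.
\end{itemize}

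Next, under the second condition I would interpolate via H\"older's inequality between the exponents $\gamma'$ and $1$ against the measure $\rho\,dxdt$:
\[
\iint_{Q_\tau} |b|^{r'} \rho \, dxdt \le \left(\iint_{Q_\tau}|b|^{\gamma'}\rho\,dxdt\right)^{r'/\gamma'}\left(\iint_{Q_\tau}\rho\,dxdt\right)^{1-r'/\gamma'}.
\]
The first factor is bounded by Proposition \ref{EstimateK}, while the second factor equals $\tau^{1-r'/\gamma'} \le T^{1-r'/\gamma'}$ since $\int_\T \rho(\cdot,t)\,dx = 1$ for every $t \in (0,\tau]$ by \eqref{consrho}. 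Thus the weighted quantity on the right-hand side of \eqref{estFP3} is controlled by a constant $C$ depending only on the stated data.

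Finally, I would plug this bound into Proposition \ref{estFP2}, applied with $q$ replaced by $\bar q$ (which is legitimate by the first condition on $\bar q$), obtaining $\|\rho\|_{\mathcal{H}_{\bar q'}^1(Q_\tau)} \le C$ with constants independent of $\tau$ and $\rho_\tau$. There is no real obstacle here since both ingredients are already in hand; the only delicate point is verifying the compatibility of the exponent relations, and in particular that the condition $\bar q \ge (d+2)/(\gamma'-1)$ is sharp for the H\"older interpolation step above.
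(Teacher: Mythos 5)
Your proposal is correct and follows essentially the same route as the paper: invoke Proposition \ref{estFP2} with $q = \bar q$ (legitimate since $\bar q' < (d+2)/(d+1)$), then control the weighted term $\iint |D_pH|^{r'}\rho$ via Proposition \ref{EstimateK}, using the exponent condition $r' \le \gamma'$ guaranteed by $\bar q \ge (d+2)/(\gamma'-1)$. The paper uses Young's inequality where you use H\"older's with the normalization $\iint\rho = \tau$, but these are interchangeable and yield the same bound.
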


\begin{proof} Since $\bar q' < \frac{d+2}{d+1}$, \eqref{estFP3} applies (with $q = \bar q$), yielding
\[
\|\rho\|_{\mathcal{H}_{\bar q'}^1(Q_\tau)}\leq C\left( \iint_{Q_\tau} |D_pH(x,Du(x,t))|^{r'} \rho(x,t) \, dxdt + 1\right),
\]
with 
\[
r' = 1 + \frac{d+2}{\bar q} \le \gamma'.
\]
If $r'=\gamma'$, use Proposition \ref{EstimateK} to conclude. Otherwise, if $r' < \gamma'$,
use Young's inequality first to control $\iint |D_pH(x,Du(x,t))|^{r'} \rho \, dxdt$ with $\iint|D_pH(x,Du)|^{\gamma'}\rho \, dxdt + \tau$.
\end{proof}
\begin{rem}\label{remregrho} It is worth noting that in the sub-quadratic regime $\gamma \le 2$, the information $b \in L^{\gamma'}(\rho)$ is strong enough to guarantee $\|D \rho\|_{L^{q'}(Q_T)}$ for all $q' < (d+2)/(d+1)$, that is expected for distributional solutions to heat equations with $L^1$ data (see e.g. \cite{Por15}). We can then regard the ${\rm div}()$ term in \eqref{adjoint} as perturbation of a heat equation. On the other hand, in the super-quadratic case $\gamma > 2$, we are just able to prove the weaker regularity $\|D \rho\|_{L^{q'}(Q_T)}$ for $q' \le q_\gamma'$, with $q_\gamma' < (d+2)/(d+1)$, where actually $q_\gamma' \to 1$ as $\gamma \to \infty$. As expected, in the super-quadratic case the Hamiltonian term in \eqref{hjb} may overcome the regularizing effect of Laplacian.
\end{rem}

Finally, if one thinks  $\rho(t)$ as a flow of probability measures, then $\rho$ enjoys also some H\"older regularity in time.

\begin{cor}\label{EstimateK3}
Let $u$ be a local weak solution to \eqref{hjb} and $\rho$ be a weak solution to \eqref{fplocaladjoint}. Then, there exists a positive constant ${C}$ such that
\begin{equation*}
{\bf d_1}(\rho(t), \rho(t'))\leq C|t-t'|^{\frac12 \wedge \frac1\gamma}\qquad \forall t,t' \in [0,\tau] ,
\end{equation*}
where ${C}$ depends in particular on $\lambda, \|a\|_{C(W^{1,\infty})}$, $C_H$, $\|u\|_{C(\overline Q_T)}$, $\norm{f}_{L^{q}(Q_\tau)}$, $d$, $T$ (but not on $\tau, \rho_\tau$).
\end{cor}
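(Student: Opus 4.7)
The plan is to employ the Kantorovich--Rubinstein duality
\[
{\bf d_1}(\rho(t),\rho(t'))=\sup\Bigl\{\int_\T \phi\,(\rho(t')-\rho(t))\,dx : \phi\in W^{1,\infty}(\T),\ \|D\phi\|_\infty\le1\Bigr\}
\]
and reduce the task to controlling $\int_\T \phi\,(\rho(t')-\rho(t))\,dx$ uniformly over $1$-Lipschitz $\phi$. Since $\phi$ is a priori not twice differentiable, I would first regularize it by convolution with a mollifier on $\T$, producing $\phi_\eps\in C^\infty(\T)$ with $\|\phi-\phi_\eps\|_\infty\le \eps$, $\|D\phi_\eps\|_\infty\le 1$, and $\|D^2\phi_\eps\|_\infty\le C/\eps$.

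Next, I would use $\phi_\eps$ (time-independent, hence trivially in $\mathcal H^1_2$) as a test function in the weak formulation \eqref{wfkp} of the adjoint equation \eqref{fplocaladjoint} on the slab $\T\times (t,t')$. After one integration by parts on the diffusion term (valid thanks to $a_{ij}\in W^{1,\infty}$ and $\rho\in\mathcal H^1_{q'}(Q_\tau)$ from Corollary \ref{EstimateK1}), this produces the identity
\[
\int_\T \phi_\eps\,(\rho(\cdot,t')-\rho(\cdot,t))\,dx
= -\int_t^{t'}\!\!\int_\T a_{ij}\rho\,\partial_{ij}\phi_\eps\,dxds+\int_t^{t'}\!\!\int_\T D_pH(x,Du)\cdot D\phi_\eps\,\rho\,dxds.
\]
Using $\int_\T\rho(\cdot,s)\,dx=1$ for every $s$ together with the $L^\infty$ bound on $a$, the diffusion term is dominated by $C|t-t'|/\eps$. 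For the drift term, H\"older's inequality in space with exponents $(\gamma',\gamma)$ (combined with $\int\rho=1$) followed by H\"older in time with the same exponents, together with the crucial bound from Proposition \ref{EstimateK}, give
\[
\int_t^{t'}\!\!\int_\T |D_pH(x,Du)|\rho\,dxds\le |t-t'|^{1/\gamma}\Bigl(\iint_{Q_\tau}|D_pH(x,Du)|^{\gamma'}\rho\Bigr)^{1/\gamma'}\le C|t-t'|^{1/\gamma}.
\]
Finally, $\bigl|\int_\T(\phi-\phi_\eps)(\rho(t')-\rho(t))\,dx\bigr|\le 2\|\phi-\phi_\eps\|_\infty \le 2\eps$, since $\|\rho(s)\|_1=1$. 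Adding the three bounds and choosing $\eps=|t-t'|^{1/2}$ yields
\[
\Bigl|\int_\T\phi\,(\rho(t')-\rho(t))\,dx\Bigr|\le C\bigl(|t-t'|^{1/2}+|t-t'|^{1/\gamma}\bigr)\le \widetilde C|t-t'|^{\frac12\wedge\frac1\gamma},
\]
where the last step uses $|t-t'|\le T$. Passing to the supremum over $\phi$ completes the argument.

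The only mildly delicate point is justifying the fundamental-theorem-of-calculus identity above, i.e.\ that \eqref{wfkp} can be tested with the time-independent function $\phi_\eps$ so as to produce the indicated boundary terms at $t$ and $t'$; this is routine in view of the $\mathcal H^1_{q'}$-regularity supplied by Corollary \ref{EstimateK1} and the fact that $\rho(s)$ is a well-defined probability measure for every $s\in(0,\tau]$ by Proposition \ref{wellADJ}. Everything else is a clean balance between the smoothness cost $1/\eps$ of the diffusion and the a priori control \eqref{estik} on the drift weighted by $\rho$.
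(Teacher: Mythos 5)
Your argument is correct, and it is in substance the standard derivation of the cited fact. The paper's proof of Corollary \ref{EstimateK3} is a one-line deferral to Lemma 4.1 of \cite{CGPT}, which is precisely the statement that a weak solution of a Fokker--Planck equation with bounded diffusion and a drift $b$ satisfying $\iint |b|^{\gamma'}\rho<\infty$ is $(\frac12\wedge\frac1\gamma)$-H\"older in time in the $\mathbf d_1$ metric; your write-up is essentially a self-contained re-derivation of that lemma in this setting, via Kantorovich--Rubinstein duality, mollification of the $1$-Lipschitz test function at scale $\eps$, testing the weak formulation \eqref{wfkp} on the slab $\T\times(t,t')$, bounding the diffusion term by $C|t-t'|/\eps$ and the drift term by $C|t-t'|^{1/\gamma}$ (H\"older with exponents $(\gamma',\gamma)$ against the probability measure $\rho\,dx\,ds$ together with \eqref{estik}), and then optimizing $\eps=|t-t'|^{1/2}$. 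One small remark of rigor: for endpoints with $t=0$ the quantity $\rho(0)$ is not a priori an $L^2$ trace (the $\H^1_2$ regularity of Proposition \ref{wellADJ} degenerates as $s\to0$), so one should first prove the estimate for $0<t<t'\le\tau$ (where testing on $(t,t')$ is justified directly from $\rho\in\H^1_2(Q_{t,\tau})$) and then extend to $t=0$ by the resulting uniform $\mathbf d_1$-continuity together with the global $\H^1_{q'}(Q_\tau)$ bound of Corollary \ref{EstimateK1} identifying $\rho(0)$ as a probability measure; you flag this point but it deserves to be stated in that order.
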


\begin{proof}
Since $\rho$ solves the Fokker-Planck equation \eqref{fplocaladjoint} with drift $D_pH(x, Du(x,t))$, given the $L^1$ bound \eqref{estik} on $|D_p H(\cdot, Du)|^{\gamma'} \rho$, the result is a straightforward application of \cite[Lemma 4.1]{CGPT}.
\end{proof}

\subsection{Further bounds for global weak solutions}

If $u$ is a global weak solution, i.e. an energy solution up to initial time, it is possible to control its $\sup$ norm in terms of $\|u_0\|_{C(\T)}$. This will be done in the next proposition.

\begin{prop}\label{stimeu}
There exists $C>0$ (depending on $\lambda, \|a\|_{C(W^{1,\infty})},T,d$) such that any global weak solution $u$ to \eqref{hjb} satisfies
\begin{equation}\label{uinfty}
\|u(\cdot, \tau)\|_{C(\T)} \le C \qquad \text{for all $\tau \in [0, T]$}.
\end{equation}
\end{prop}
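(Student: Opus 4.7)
The plan is to prove the $L^\infty$ bound in two independent steps: an upper bound obtained by direct comparison with a linear parabolic equation (exploiting $H\ge 0$), and a lower bound obtained through the duality identity of Lemma \ref{representationformulaHJ}, applied at $s=0$ with a test adjoint density that approximates a Dirac mass.

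\emph{Upper bound.} Since $H(x,p)\ge 0$, $u$ is a subsolution of the linear equation $\partial_t v - a_{ij}\partial_{ij}v = f$. Let $\bar u$ solve this linear Cauchy problem with $\bar u(\cdot,0) = u_0$. Standard $L^q$-to-$L^\infty$ parabolic theory (applicable since $q>d+2>(d+2)/2$) yields
\[
\|\bar u\|_{L^\infty(Q_T)} \le C_0 \bigl(\|u_0\|_{C(\T)} + \|f\|_{L^q(Q_T)}\bigr) =: M_+.
\]
The difference $w:=u-\bar u$ lies in $\H_2^1(Q_T)$, vanishes at $t=0$, and solves $\partial_t w - \partial_j(a_{ij}\partial_i w) = -H(x,Du)\le 0$ in the distributional sense. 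Testing against $w^+\in \H_2^1(Q_T)$, using \eqref{A1} and controlling the lower order drift term $\partial_i w^+\,\partial_j a_{ij}\, w^+$ via Young's inequality, one obtains $\tfrac{d}{dt}\|w^+(t)\|_{L^2(\T)}^2 \le C\,\|w^+(t)\|_{L^2(\T)}^2$; Gronwall then forces $w^+\equiv 0$, hence $u\le M_+$.

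\emph{Lower bound via duality.} Fix $\tau\in(0,T]$ and $x_0\in\T$. Pick non-negative $\rho_\tau^n\in C^\infty(\T)$ with $\int_\T\rho_\tau^n=1$ and $\rho_\tau^n \rightharpoonup \delta_{x_0}$ weakly-$*$, and let $\rho^n$ solve \eqref{fplocaladjoint} with terminal datum $\rho_\tau^n$. Since $u$ is a \emph{global} weak solution, the drift $D_pH(\cdot,Du)\in L^\sQ(0,T;L^\sP(\T))$ up to $t=0$, so Proposition \ref{wellADJ} gives $\rho^n\in \H_2^1(Q_\tau)$ and Lemma \ref{representationformulaHJ} applies at $s=0$. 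Using $L\ge -C_L$ (from \eqref{L1}), $\|\rho^n(t)\|_{L^1}=1$, and H\"older on the $f\rho^n$ term, one gets for any $\eta\in((d+2)/\gamma', d+2)$
\[
\int_{\T} u(\cdot,\tau)\rho_\tau^n\,dx \ge -\|u_0\|_{C(\T)} - C_L T - \|f\|_{L^\eta(Q_\tau)}\,\|\rho^n\|_{L^{\eta'}(Q_\tau)}.
\]

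\emph{The main obstacle --- controlling $\|\rho^n\|_{L^{\eta'}}$.} All previous estimates on the adjoint density (Proposition \ref{EstimateK}, Corollary \ref{EstimateK1}) involved a two-sided bound on $u$, which would make the argument circular. The key observation that unlocks the proof is that choosing $s=0$ in the representation formula replaces the ``two-sided'' term $\int u(s)\rho(s)$ by the a priori datum $\int u_0 \rho^n(\cdot,0)$, so only the one-sided upper bound $u \le M_+$ already established is needed. Rearranging the representation formula,
\[
\iint_{Q_\tau} L(x,D_pH(Du))\rho^n\,dxdt \le M_+ + \|u_0\|_{C(\T)} + \|f\|_{L^\eta}\,\|\rho^n\|_{L^{\eta'}}.
\]
Combining this with $L\ge C_L^{-1}|D_pH|^{\gamma'}-C_L$, Proposition \ref{estFP2} applied with a $\bar q$ giving $r'\le\gamma'$, the embedding $\H_{\bar q'}^1\hookrightarrow L^{\eta'}$ of Proposition \ref{embedding2}, and Young's inequality to absorb a term of the form $\eps\iint|D_pH(Du)|^{\gamma'}\rho^n$, one deduces $\iint|D_pH(Du)|^{\gamma'}\rho^n \le C$ and hence $\|\rho^n\|_{L^{\eta'}(Q_\tau)}\le C$, uniformly in $n$ and $\tau$, with $C$ depending only on $\lambda, C_H, \|a\|_{C(W^{1,\infty})}, \|u_0\|_{C(\T)}, \|f\|_{L^q(Q_T)}, d, T$. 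Sending $n\to\infty$ and using $u(\cdot,\tau) \in C(\T)$ yields $u(x_0,\tau)\ge -C$, uniformly in $x_0$ and $\tau$; combined with the upper bound this proves \eqref{uinfty}.
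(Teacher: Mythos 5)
Your proof is correct, and the lower bound follows essentially the same route as the paper: apply the duality identity of Lemma \ref{representationformulaHJ} at $s=0$, observe that only the \emph{upper} bound on $u$ is needed to estimate $\iint L(x,D_pH(Du))\rho$ and hence $\iint|D_pH(Du)|^{\gamma'}\rho$ (via \eqref{L1}, Proposition \ref{estFP2} with $r'<\gamma'$, and absorption), and then deduce the lower bound on $u(\tau)$ from the same identity.

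For the upper bound, however, you take a genuinely different route. The paper tests the weak HJ formulation against the solution $\mu$ of the backward equation $-\partial_t\mu-\partial_{ij}(a_{ij}\mu)=0$ with terminal probability density $\mu_\tau$; it uses Proposition \ref{estFP2} (with $b\equiv 0$) to bound $\|\mu\|_{\mathcal H^1_{q'}(Q_\tau)}$ and then H\"older plus $H\ge 0$ to get $\int u(\tau)\mu_\tau\le \|u_0\|_{C(\T)}+C\|f\|_{L^q}$, finally passing to the supremum over $\mu_\tau$. You instead compare $u$ directly with the solution $\bar u$ of the linear Cauchy problem, invoke the classical $L^q\!\to\! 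L^\infty$ parabolic estimate, and then kill the positive part of $u-\bar u$ by an energy/Gronwall argument. Both approaches are valid and give the same bound $M_+$; the paper's version is more uniform with the adjoint-duality machinery developed elsewhere and avoids invoking a separate comparison/maximum principle in the (energy) weak setting, while yours is more elementary and self-contained (it needs no estimate on an adjoint density). Two small technical points worth noting for rigor in your version: you should check $w^+\in\mathcal H^1_2(Q_T)\cap L^\infty(0,T;L^{\sigma'}(\T))$ is admissible as a test function in \eqref{weake} (it is, since $u\in C(\overline Q_T)$ by Definition \ref{wsol} and $\bar u\in L^\infty$, so $w^+$ is bounded), and that the $\partial_j a_{ij}$ drift term really only requires $Da\in L^\infty$, which holds under $a\in C(0,T;W^{2,\infty}(\T))$.

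As a cosmetic remark (affecting both your proof and the paper's), the constant in \eqref{uinfty} inevitably also depends on $\|u_0\|_{C(\T)}$, $\|f\|_{L^q(Q_T)}$, $C_H$ and $q$; the list given in the statement of the proposition is incomplete, but this is not an error in your argument.
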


\begin{proof}
First, we prove a bound from above for $u$:
\begin{equation}\label{esttestheat}
u(x,\tau)\leq \|u_0\|_{C(\T)} + C\|f\|_{L^q(Q_\tau)}
\end{equation}
for all $\tau \in (0, T)$ and $x \in \T$. Consider indeed the (strong) non-negative solution of the following backward problem
\begin{equation*}
\begin{cases}
-\partial_t\mu(x,t)-\sum_{i,j}\partial_{ij}(a_{ij}(x,t)\mu(x,t))=0&\text{ on }Q_{\tau}\ ,\\
\mu(x,\tau)=\mu_{\tau}(x)&\text{ on }\T\ .
\end{cases}
\end{equation*}
with $\mu_{\tau}\in C^{\infty}(\T)$, $\mu_\tau\geq0$ and $\norm{\mu_{\tau}}_{L^1(\T)}=1$. Note that $\mu$ is a solution of a Fokker-Planck equation of the form \eqref{fplocal} with drift $b \equiv 0$. Then, since $q' < (d+2)/(d+1)$, by Proposition \ref{estFP2} there exists a positive constant $C$ (not depending on $\tau, \mu_\tau$) such that $\|\mu\|_{\mathcal{H}_{q'}^1(Q_\tau)}\leq C$.

Use $\mu$ as a test function in the weak formulation of the Hamilton-Jacobi equation \eqref{hjb} to get
\begin{equation*}
\int_{\T}u(x,\tau)\mu_{\tau}(x)dx=\int_{\T}u_0(x)\mu(x,0)dx+\iint_{Q_\tau}f\mu dxdt-\iint_{Q_\tau}H(x,Du)\mu dxdt\ .
\end{equation*}
Applying H\"older's inequality to the second term of the right-hand side of the above inequality and the fact that $\|\mu(t)\|_{L^1(\T)}=1$ for all $t\in[0,\tau]$, we get
\begin{equation*}
\int_{\T}u(x,0)\mu(x,0)dx + \int_{0}^{\tau}\int_{\T}f\mu dxdt\leq \|u_0\|_{C(\T)} + C\|f\|_{L^q(Q_\tau)}\ ,
\end{equation*}
 By the assumption $H(x,p)\geq 0$, we then conclude
\begin{equation*}
\int_{\T}u(x,\tau)\mu_{\tau}(x)dx\leq  \|u_0\|_{C(\T)} + C\|f\|_{L^q(Q_\tau)}\ .
\end{equation*}
Finally, by passing to the supremum over $\mu_\tau\geq0$, $\norm{\mu_{\tau}}_{L^1(\T)}=1$, one deduces the estimate \eqref{esttestheat} by duality.

To prove the bound from below of $u$, one can argue exactly as in the proof of Proposition \ref{EstimateK}, starting from the representation formula \eqref{ineqK1} with $s = 0$. Using the additional upper bound \eqref{esttestheat},
\[
\iint_{Q_\tau}|D_pH(x,Du(x,t))|^{\gamma'}\rho(x,t) \, dxdt \leq 2\|u_0\|_{C(\T)} + C\|f\|_{L^q(Q_\tau)} +  \|f\|_{L^\eta(Q_{\tau})}\|\rho\|_{L^{\eta'}(Q_{\tau})}.
\]
This provides as before a control on $\iint_{Q_\tau}|D_pH(x,Du))|^{\gamma'}\rho \, dxdt$ and thus on $\|\rho\|_{L^{\eta'}(Q_{\tau})}$, which depends on $\|u_0\|_{C(\T)}$ instead of the full sup norm $\|u\|_{C(\overline Q_T)}$. Going back to \eqref{reprformula1},
\[
\int_{\T}u(x,\tau)\rho_{\tau}(x)dx \ge \int_{\T}u(x,0)\rho(x,0) - C_L \iint_{Q_\tau} \rho(x,t) dxdt+\iint_{Q_\tau} f\rho\,  dxdt.
\]
Since $\iint f\rho$ can be bounded (from below) by H\"older's inequality, 
\[
\int_{\T}u(x,\tau)\rho_{\tau}(x)dx \ge - \|u(\cdot,0)\|_{C(\T)} - C_L  \tau - C.
\]
Since $\rho_\tau$ can be arbitrarily chosen so that $\| \rho_\tau \|_{L^1(\T)} = 1$, we have the desired result.
\end{proof}

\subsection{Proof of Theorem \ref{LipReg}}

The following theorem contains the core argument of Lipschitz regularity.

\begin{thm}\label{StimaLip} Let $u$ be a local weak solution to \eqref{hjb}. Suppose first that \eqref{conditionAS} holds with $\sP = \sQ$.

Let $\eta \in C_0^\infty((0, T])$ be a smooth function satisfying $0 \le \eta(t) \le 1$ for all $t$. Then, $(\eta u)(\cdot, \tau) \in W^{1,\infty}(\T)$ for all $\tau \in [0,T]$, and there exists $C > 0$ depending on 
$\lambda, \|a\|_{C(W^{1,\infty})}$, $\|D^2 a\|_{L^\infty(Q_\tau)}$, $C_H$, $\|u\|_{C(\overline Q_T)}$, $\norm{f}_{L^{q}(Q_T)}$, $q , d, T$ such that
\[
\eta(\tau) \|D u(\cdot, \tau)\|_{L^{\infty}(\T)} \le C \Big( 
\|D a\|_{L^\infty(Q_\tau)} \|\eta Du\|_{L^{(d+2)(\gamma-1)}(Q_\tau)} +\sup_{(0, T)}|\eta'(t)| + 1 \Big)
\]
for all $\tau \in [0,T]$.

Without requiring $\sP = \sQ$ in \eqref{conditionAS}, but assuming in addition that $D a \equiv 0$ on $Q_T$, we have the same assertion, and in particular
\[
\eta(\tau) \|D u(\cdot, \tau)\|_{L^{\infty}(\T)} \le C \Big( 
\sup_{(0, T)}|\eta'(t)| + 1 \Big)
\]
for all $\tau \in [0,T]$.
\end{thm}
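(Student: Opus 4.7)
The plan is to make rigorous the adjoint-method heuristic of Subsection \ref{deriv}, using spatial difference quotients to bypass the lack of classical regularity of the weak solution $u$. Fix a unit vector $\xi\in\R^d$, a small $h>0$, and set $u_h(x,t):=u(x+h\xi,t)$, $w:=u_h-u$. For $\rho_\tau\in C^\infty(\T)$ with $\rho_\tau\ge 0$ and $\|\rho_\tau\|_{L^1(\T)}=1$, let $\rho$ be the weak solution to the adjoint Fokker--Planck equation \eqref{fplocaladjoint}; by Proposition \ref{wellADJ} and Corollary \ref{EstimateK1}, $\|\rho\|_{\mathcal{H}_{\bar q'}^1(Q_\tau)}\le C$ with $\bar q=\max\{d+2,(d+2)/(\gamma'-1)\}$, independently of $\tau,\rho_\tau$.

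First I would derive a one-sided duality identity by testing the weak formulation of \eqref{hjb} for $u_h-u$ against $\eta(t)\rho(x,t)$, and the Fokker--Planck weak formulation for $\rho$ against $\eta(t)w(x,t)$. After integrating by parts in time (using $\eta(0)=0$) and canceling the matching diffusion pieces $\iint\eta\,\partial_j(a_{ij}\rho)\partial_i w$ appearing on both sides, the identity places $\eta(\tau)\int w(\tau)\rho_\tau$ on one side, balanced on the other by the time-cutoff term $\iint\eta'\rho w$, the $f$-difference $\iint\eta[f(x+h\xi)-f(x)]\rho$, the $a$-difference $\iint\eta\,\partial_i u_h\,\partial_j([a_{ij}(x+h\xi)-a_{ij}(x)]\rho)$, and the nonlinear combination $A-B$ with $A=\iint\eta\rho D_pH(x,Du)\cdot Dw$ and $B=\iint\eta[H(x+h\xi,Du_h)-H(x,Du)]\rho$. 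Convexity of $H$ in $p$ gives $H(x,Du_h)-H(x,Du)\ge D_pH(x,Du)\cdot Dw$ pointwise; multiplying by $\eta\rho\ge0$ and integrating reduces $A-B$ to (minus) the pure $x$-increment of $H$, controlled pointwise by $hC_H(|Du_h|^\gamma+1)\eta\rho$ via \eqref{H}. This is the crucial step where the absence of smoothness of $u$ is absorbed by the sign provided by convexity.

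Dividing by $h$, each remaining term on the right-hand side admits a bound uniform as $h\to 0$. The $\eta'$-term is controlled by $\sup|\eta'|\iint|Du|\rho$, handled via \eqref{estik1} with $\beta=1$; the convexity-error by $C_H\iint\eta(|Du_h|^\gamma+1)\rho$, handled by \eqref{estik1} with $\beta=\gamma$; the $f$-difference, after a discrete integration by parts transferring the difference quotient onto $\rho$, by $\|f\|_{L^q(Q_\tau)}\|D\rho\|_{L^{q'}(Q_\tau)}\le C$ using $q\ge\bar q$ and Corollary \ref{EstimateK1}. The delicate $a$-difference, after expanding $\partial_j(\cdot\,\rho)$ and using that $[a_{ij}(x+h\xi)-a_{ij}(x)]/h\to D_\xi a_{ij}$, splits into a $D^2 a$-piece (absorbed into the constant via the parabolic embedding $\mathcal H_{\bar q'}^1\hookrightarrow L^{\bar p}$ of Proposition \ref{embedding2}) and a $Da$-piece, estimated by H\"older against $\|D\rho\|_{L^{\bar q'}}$ to produce the announced factor $\|Da\|_\infty\|\eta Du\|_{L^{(d+2)(\gamma-1)}(Q_\tau)}$; the exponent $(d+2)(\gamma-1)$ is dictated precisely by the Sobolev regularity of $\rho$ supplied by Corollary \ref{EstimateK1} (possibly after using the finite measure of $Q_\tau$ to adjust exponents).

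Finally, passing $h\to 0$ (justified by the convergence of difference quotients of $u$ in $L^p$, using the integrability $Du\in L^{\sP(\gamma-1)}_{\mathrm{loc}}$ granted by Definition \ref{wsol}), and then letting $\rho_\tau$ range over an approximate Dirac mass concentrating at any $x_0\in\T$, an $L^1$--$L^\infty$ duality together with the arbitrariness of $\xi\in S^{d-1}$ yields the stated Lipschitz bound. In the constant-coefficient case $Da\equiv0$ the entire $a$-difference vanishes from the identity, so only the two universal contributions remain; moreover, since the $Da\cdot Du\cdot D\rho$ pairing (the one place where sharp Sobolev regularity of $D\rho$ was essential) no longer appears, all estimates on $\rho$ actually used work under the broader Aronson--Serrin condition \eqref{conditionAS}, so $\sP=\sQ$ can be dropped. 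The main technical obstacle will be the bookkeeping of Lebesgue exponents around the critical $Da$-piece — matching H\"older exponents with $\bar q'$ in the full range $\gamma>1$ — together with the rigorous justification of the passage $h\to 0$ in the weak formulation, which is standard in spirit but requires care to accommodate the minimal regularity of the weak solution.
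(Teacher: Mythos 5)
Your strategy follows the paper's adjoint-method scheme: spatial difference quotients, duality with the Fokker--Planck adjoint, and a one-sided inequality coming from convexity. The handling of the $a$-difference, the $f$-difference (transferring the increment onto $\rho$ and estimating $\|f\|_{L^q}\|D\rho\|_{L^{q'}}$), the $\eta'$-term, and the role of $\sP=\sQ$ in the constant-coefficient reduction all match the paper's Step~2. Where your argument fails to close is in the control of the nonlinear residual.

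You split
\[
H(x+h\xi,Du_h)-H(x,Du)=\bigl[H(x+h\xi,Du_h)-H(x,Du_h)\bigr]+\bigl[H(x,Du_h)-H(x,Du)\bigr]
\]
and apply the pointwise convexity inequality $H(x,Du_h)-H(x,Du)\geq D_pH(x,Du)\cdot Dw$ to the second bracket, leaving as residual the $x$-increment of $H$ evaluated at the \emph{shifted} gradient $Du_h$. You propose to bound $\iint\eta\,(|Du_h|^\gamma+1)\,\rho$ by \eqref{estik1} with $\beta=\gamma$. However \eqref{estik1} (equivalently Proposition~\ref{EstimateK}) provides a bound on $\iint|Du|^\gamma\rho$ only. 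After a change of variables, $\iint|Du_h|^\gamma\rho\,dxdt=\iint|Du(y,t)|^\gamma\,\rho(y-h\xi,t)\,dydt$ pairs the \emph{unshifted} gradient against the \emph{shifted} adjoint $\hat\rho(\cdot)=\rho(\cdot-h\xi,\cdot)$; the matched pair to which Proposition~\ref{EstimateK} applies is always $(Du(\cdot-h\xi,\cdot),\,\rho(\cdot-h\xi,\cdot))$ or $(Du,\rho)$, and either way the duality argument returns only the unshifted bound $\iint|Du|^\gamma\rho\leq C$. The quantity you need, $\iint|Du_h|^\gamma\rho$, is not controlled uniformly in $h$ and $\rho_\tau$ by the estimates at your disposal, so the step does not close.

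The paper avoids this mismatch precisely by invoking the Legendre form rather than raw convexity: it plugs the \emph{same suboptimal control} $\nu=D_pH(x-h\xi,Du(x-h\xi,t))$ into $H(x,p)\geq\nu\cdot p-L(x,\nu)$, tested against $\eta\hat\rho$ with $\hat\rho=\rho(\cdot-h\xi,\cdot)$. After the change of variables the two Lagrangians in \eqref{optloc} and \eqref{suboptloc} carry the \emph{same} argument $\nu=D_pH(x,Du(x,t))$ and only the $x$-slot of $L$ is shifted, so the residual is $L(x+h\xi,\nu)-L(x,\nu)$ with $\nu$ unshifted, controlled via \eqref{L2} by $\iint|D_pH(x,Du)|^{\gamma'}\rho\leq C$ from Proposition~\ref{EstimateK}. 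Reformulating your convexity step in Legendre language with a fixed $\nu=D_pH(x,Du)$ (instead of splitting the increment of $H$ as above) removes the shift from the $p$-slot and recovers the paper's argument. A minor remark: there is no need to send $h\to 0$; after choosing $\rho_\tau$ to concentrate at $x_0$, the difference-quotient inequality $\eta(\tau)\bigl(u(x_0+h\xi,\tau)-u(x_0,\tau)\bigr)\leq C|h|$ already gives Lipschitz continuity directly, whereas passing $h\to 0$ first would presuppose the existence of $Du(\cdot,\tau)$ as a function.
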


\begin{proof}
\textbf{Step 1.} Since $H$ is convex and superlinear we can write for a.e. $(x,t) \in Q_T$
\begin{equation*}
H(x,Du(x, t))=\sup_{\nu \in \R^d}\{\nu\cdot Du(x, t)-L(x,\nu)\}.
\end{equation*}
Hence we get, for $0 < s < \tau$,
\begin{multline}\label{HJoptimalLOC}
\int_s^\tau \langle \partial_t u(t), \varphi(t) \rangle dt +  \iint_{Q_{s,\tau}}  \partial_i u(x, t) \, \partial_j(a_{ij}(x, t) \varphi(x, t)) + [\Xi(x, t) \cdot Du(x, t)-L(x,\Xi(x, t))] \varphi \, dxdt \\
\le  \iint_{Q_{s,\tau}} f(x, t) \varphi(x, t) \,dxdt
\end{multline}
for all test functions $\varphi \in  \H_2^1(Q_{s,\tau}) \cap L^\infty(s,\tau;L^{\sigma'}(\T))$ and measurable $\Xi : Q_{s,\tau} \to \R^d$ such that $L(\cdot, \Xi(\cdot, \cdot)) \in L^1(s,\tau;L^{\sigma}(\T))$ and $\Xi\cdot Du \in  L^1(s,\tau;L^{\sigma}(\T))$. Note that the previous inequality becomes an equality if $\Xi(x,t) = D_pH(x,Du(x,t))$ in $Q_{s,\tau}$.

We fix $\rho_\tau$ as in \eqref{rhoass}. Set \[w(x,t)=\eta(t) u(x,t).\] 
 
 Use now \eqref{HJoptimalLOC} with $\Xi(x,t) = D_pH(x,Du(x,t))$ and $ \varphi = \eta \rho \in  \H_2^1(Q_\tau) \cap L^\infty(s,\tau;L^{\sigma'}(\T))$ for all $1<\sigma'<\infty$, where $\rho$ is the adjoint variable (i.e. the weak solution to \eqref{fplocaladjoint}) to find
\begin{multline}\label{loc1}
\int_s^\tau \langle \partial_t w(t), \rho(t) \rangle dt +  \iint_{Q_{s,\tau}}  \partial_i w \, \partial_j(a_{ij} \rho) + D_pH(x,Du) \cdot Dw \rho -L(x,D_pH(x,Du)) \eta \rho \, dxdt \\
=  \iint_{Q_{s,\tau}} f \eta \rho \,dxdt +  \iint_{Q_{s,\tau}} u \eta' \rho \,dxdt.
\end{multline}
Then, use $w \in \H_2^1(Q_T)$ as a test function in the weak formulation of the equation satisfied by $\rho$ to get
\begin{equation}\label{inte1}
-\int_s^\tau \langle \partial_t\rho(t), w(t) \rangle dt + \iint_{Q_{s,\tau}} \partial_{j}(a_{ij}\rho)\partial_i w  + D_pH(x,Du) \rho \cdot Dw \,dxdt = 0.
\end{equation}
We now fix $s$ small so that $\eta(s) = 0$. We then obtain, subtracting the previous equality to \eqref{loc1}, and integrating by parts in time
\begin{multline}\label{optloc}
\int_{\T}w(x,\tau)\rho_{\tau}(x)dx= \iint_{Q_{s,\tau}}\eta(t)f(x,t)\rho(x,t)dxdt\\
+\iint_{Q_{s,\tau}}\eta(t)L\big(x,D_pH(x,Du(x,t))\big)\rho(x,t)dxdt+\iint_{Q_{s,\tau}} \eta'(t)u(x,t)\rho(x,t)dxdt.
\end{multline}

For $h>0$ and $\xi\in\R^d$, $|\xi|=1$, define $\hat{\rho}(x,t):=\rho(x-h\xi,t)$. After a change of variables in \eqref{fplocaladjoint}, it can be seen that $\hat{\rho}$ satisfies, using $w$ as a test function,
\begin{multline}\label{inte2}
-\int_s^\tau \langle \partial_t \hat \rho(t), w(t) \rangle dt \\
+ \iint_{Q_{s,\tau}} \partial_{j}\big(a_{ij}(x-h\xi ,t)\hat \rho(x,t)\big)\partial_i w  +  D_pH(x-h\xi,Du(x-h\xi,t))\hat \rho(x,t) \cdot Dw(x,t) \,dxdt = 0.
\end{multline}
As before, plugging $\Xi(x,t) = D_pH(x-h\xi,Du(x-h\xi,t))$ and $\varphi = \eta \hat \rho$ in \eqref{HJoptimalLOC} yields
\begin{multline*}
\int_s^\tau \langle \partial_t w(t), \hat \rho(t) \rangle dt + \\ \iint_{Q_{s,\tau}}  \partial_i w \, \partial_j(a_{ij} \hat \rho) + D_pH(x-h\xi,Du(x-h\xi,t)) \cdot Dw \hat \rho -L(x,D_pH(x-h\xi,Du(x-h\xi,t))) \eta \hat \rho \, dxdt \\
\le  \iint_{Q_{s,\tau}} f \eta \hat \rho \,dxdt +  \iint_{Q_{s,\tau}} u \eta' \hat \rho \,dxdt.
\end{multline*}
Hence, subtracting \eqref{inte2} to the previous inequality, 
\begin{multline*}
\int_{\T}w(x,\tau)\hat{\rho}_{\tau}(x)dx \le 
\iint_{Q_{s,\tau}} \partial_{j}\Big(\big(a_{ij}(x-h\xi,t) - a_{ij}(x,t)\big) \hat \rho(x,t)\Big)\partial_i w \,dxdt\\
+ \iint_{Q_{s,\tau}} L(x,D_pH(x-h\xi,Du(x-h\xi,t))) \eta \hat \rho \, dxdt+ \iint_{Q_{s,\tau}} f \eta \hat \rho \,dxdt +  \iint_{Q_{s,\tau}} u \eta' \hat \rho \,dxdt,
\end{multline*}
which, after the change of variables $x \mapsto x + h\xi$, becomes
\begin{multline}\label{suboptloc}
\int_{\T}w(x + h\xi,\tau){\rho}_{\tau}(x)dx \le 
\iint_{Q_{s,\tau}} \partial_{j}\Big(\big(a_{ij}(x-h\xi,t) - a_{ij}(x,t)\big) \rho(x,t)\Big)\partial_i w \,dxdt\\
+ \iint_{Q_{s,\tau}} \eta(t) L(x+h\xi,D_pH(x,Du(x,t)))\rho(x,t) \, dxdt \\+ \iint_{Q_{s,\tau}} f \eta \hat \rho \,dxdt +  \iint_{Q_{s,\tau}} u \eta' \hat \rho \,dxdt,
\end{multline}
Taking the difference between \eqref{suboptloc} and \eqref{optloc} we obtain
\begin{equation}\label{step1}
\begin{aligned}
&\int_{\T}(w(x + h\xi, \tau)-  w(x,\tau)) {\rho}_{\tau}(x)dx \le 
\iint_{Q_{s,\tau}} \partial_{j}\Big(\big(a_{ij}(x-h\xi,t) - a_{ij}(x,t)\big) \rho(x,t)\Big)\partial_i w \,dxdt\\
&\qquad + \iint_{Q_{s,\tau}} \eta(t) \Big(L(x+h\xi,D_pH(x,Du(x,t))) - L(x,D_pH(x,Du(x,t)))\Big)\rho(x,t) \, dxdt \\
&\qquad + \iint_{Q_{s,\tau}} \eta(t) f(x,t) \big(\rho(x-h\xi, t)- \rho(x,t) \big) \,dxdt \\
&\qquad +  \iint_{Q_{s,\tau}} \eta'(t) u(x,t)\big( \rho(x-h\xi,t) - \rho(x,t) \big) \,dxdt.
\end{aligned}
\end{equation}

\textbf{Step 2.} We now estimate all the right hand side terms of \eqref{step1}. We stress that constants $C, C_1, \ldots$ are not going to depend on $\tau, \rho_\tau, h, \xi$. 

Regarding the first term, assuming that $\sP = \sQ$ holds in \eqref{conditionAS}, we have by the growth assumptions \eqref{H} on $D_pH$ that $\eta Du \in L^{(d+2)(\gamma-1)}(Q_\tau)$. Note that this fact will be used in the next chain of inequalities only. By Young's and Holder's inequality
\begin{multline}\label{aterm}
\left| \iint_{Q_{s,\tau}} \partial_{j}\Big(\big(a_{ij}(x-h\xi,t) - a_{ij}(x,t)\big) \rho(x,t)\Big)\partial_i w \,dxdt \right| = 
\\
\left|\iint_{Q_{s,\tau}} \big(\partial_{j}a_{ij}(x-h\xi,t) - \partial_{j}a_{ij}(x,t)\big) \rho\, \partial_i w \,dxdt + \iint_{Q_{s,\tau}} (a_{ij}(x-h\xi,t) - a_{ij}(x,t)) \partial_j\rho \, \partial_i w \,dxdt \right| \\
\le \|D^2 a\|_{L^\infty(Q_{s,\tau})} |h| \iint_{Q_{s,\tau}} |Du|\rho \, dxdt +  \|D a\|_{L^\infty(Q_{s,\tau})} |h|  \iint_{Q_{s,\tau}} |\eta Du|\, |D\rho| \, dxdt \\
\le C|h|\left(\iint_{Q_{s,\tau}} |Du|^\gamma \rho \, dxdt + \tau +  \|D a\|_{L^\infty(Q_{s,\tau})} \|\eta Du\|_{L^{(d+2)(\gamma-1)}(Q_{s,\tau})} \|D\rho\|_{L^{(\, (d+2)(\gamma-1) \, )'}(Q_{s,\tau})} \right) \\
\le C_1|h|\left(  \|D a\|_{L^\infty(Q_{s,\tau})} \|\eta Du\|_{L^{(d+2)(\gamma-1)}(Q_{s,\tau})} +1 \right),
\end{multline}
where in the last inequality we used \eqref{estik1} and Corollary \ref{EstimateK1} (with $\bar q = (d+2)(\gamma-1) = (d+2)/(\gamma'-1) \ $ ).

Next, using first the mean value theorem (that yields a function $\zeta : \T \to \T$), then property \eqref{L2} of $D_xL$ and \eqref{estik},
\begin{multline*}
\left| \iint_{Q_{s,\tau}} \eta(t) \Big(L(x+h\xi,D_pH(x,Du(x,t))) - L(x,D_pH(x,Du(x,t)))\Big)\rho(x,t) \, dxdt \right|  \\
\le |h| \iint_{Q_{s,\tau}}  \big|D_x L\big(\zeta(x), D_pH(x,Du(x,t))\big)\big| \rho(x,t) \, dxdt  \\
\le C_L |h| \iint_{Q_{s,\tau}} \big(|D_pH(x,Du(x,t))|^{\gamma'} +1\big) \rho(x,t) \, dxdt \le C |h|.
\end{multline*}
Denote by $D^h \rho(x,t):=|h|^{-1}(\rho(x+h\xi,t)-\rho(x,t))$. Then, for the term involving $f$ we use again Corollary \ref{EstimateK1}, with $\bar q = q$, and control the $L^{q'}$ norm of difference quotient $D^h \rho$ via $D\rho$ (as in, e.g. \cite[Theorem 2.1.6]{Z}), to get
\begin{multline*}
\left|  \iint_{Q_{s,\tau}} \eta(t) f(x,t) \big(\rho(x-h\xi, t)- \rho(x,t) \big) \,dxdt \right |   \\
\le |h| \iint_{Q_{s,\tau}} |f(x,t)|\,|D^h \rho(x,t)| \,dxdt \le |h| \|f\|_{L^q(Q_{s,\tau})} \|D\rho\|_{L^{q'}(Q_{s,\tau})} \le C|h|.
\end{multline*}
Finally, by boundedness of $u$ stated in \eqref{uinfty} and again Corollary \ref{EstimateK1}
\begin{multline*}
\left|  \iint_{Q_{s,\tau}} \eta'(t) u(x,t)\big( \rho(x-h\xi,t) - \rho(x,t) \big) \,dxdt \right| \le |h| \big(\sup_{(0, T)}|\eta'(t)| \big) \|u\|_{L^\infty(Q_{s,\tau})} \|D\rho\|_{L^{1}(Q_{s,\tau})} \\
\le C|h| \sup_{(0, T)}|\eta'(t)|.
\end{multline*}
Plugging all the estimates in \eqref{step1} we obtain
\begin{equation}\label{step2}
  \int_{\T}(w(x + h\xi, \tau)-  w(x,\tau)) {\rho}_{\tau}(x)dx   \\ \le C|h| \Big(
  \|D a\|_{L^\infty(Q_\tau)} \|\eta Du\|_{L^{(d+2)(\gamma-1)}(Q_\tau)} +\sup_{(0, T)}|\eta'(t)| + 1 \Big).
\end{equation}

\textbf{Step 3.} Since \eqref{step2} holds for all smooth $\rho_\tau \ge 0$ with $\|\rho_\tau\|_{L^1(\T)} = 1$, we get
\begin{equation*}
\eta(\tau) [u(x + h\xi, \tau)-  u(x,\tau)] \le C|h| \Big(
 \|D a\|_{L^\infty(Q_\tau)} \|\eta Du\|_{L^{(d+2)(\gamma-1)}(Q_\tau)} +\sup_{(0, T)}|\eta'(t)| + 1 \Big)
\end{equation*}
for all $x \in \T$, $\xi \in \R^d$, $h > 0$. Thus, $u(\cdot, \tau)$ is Lipschitz continuous, and
\begin{equation*}
\eta(\tau) \|D u(\cdot, \tau)\|_{L^{\infty}(\T)} \le C \Big(
 \|D a\|_{L^\infty(Q_\tau)} \|\eta Du\|_{L^{(d+2)(\gamma-1)}(Q_\tau)} +\sup_{(0, T)}|\eta'(t)| + 1 \Big).
\end{equation*}
Since $C$ does not depend on $\tau \in (0,T)$, we have proved the theorem.

Finally, for the special case $D a \equiv 0$ on $Q_T$, one may follow the very same lines, with the difference that there is no need to control the term appearing in \eqref{aterm} (which is identically zero). Therefore, there is no need to keep track of $\|\eta Du\|_{L^{(d+2)(\gamma-1)}(Q_\tau)}$, and therefore the theorem is proven without assuming the constraint $\sP = \sQ$ in \eqref{conditionAS}.

\end{proof}

The following lemma shows that $\|Du\|_{L^{\gamma}(Q_T)}$ can be bounded by a constant depending on the data only.

\begin{lemma}\label{StimaDu}
Let $u$ be a local weak solution. Then, there exists a constant $C$ depending on $C_H$, $\|u\|_{C(\overline Q_T)}$, $\norm{f}_{L^{q}(Q_T)}$, $\|Da\|_{L^\infty(Q_\tau)}$, $q, d, T$ such that 
\[
\|Du\|_{L^{\gamma}(Q_T)}\leq C.
\]
\end{lemma}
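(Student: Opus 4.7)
The strategy is to test the weak formulation of \eqref{hjb} against the constant function $\varphi \equiv 1$, which is clearly admissible (it belongs to $\H_2^1(Q_{s,\tau}) \cap L^\infty(s,\tau;L^{\sigma'}(\T))$ for every $\sigma'$), and exploit the coercivity of $H$ from hypothesis \eqref{H} to extract an $L^\gamma$ bound on $Du$.

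More precisely, fix $0 < s < T$ and plug $\varphi \equiv 1$ into \eqref{weake} with $\tau = T$. Since $u \in \H_2^1(Q_{s,T}) \hookrightarrow C([s,T];L^2(\T))$, the duality pairing integrates to the boundary contribution, yielding
\begin{equation*}
\int_\T u(x,T)\,dx - \int_\T u(x,s)\,dx + \iint_{Q_{s,T}} (\partial_j a_{ij})\partial_i u\,dxdt + \iint_{Q_{s,T}} H(x,Du)\,dxdt = \iint_{Q_{s,T}} f\,dxdt.
\end{equation*}
Using the coercivity $H(x,p) \ge C_H^{-1}|p|^\gamma - C_H$ from \eqref{H}, together with Hölder's inequality on the $f$-term (recall $q>d+2$ so $f \in L^1(Q_T)$) and the fact that $|\T|=1$, I would obtain
\begin{equation*}
C_H^{-1}\iint_{Q_{s,T}}|Du|^\gamma\,dxdt \le 2\|u\|_{C(\overline Q_T)} + C_H T + \|f\|_{L^1(Q_T)} + \|Da\|_{L^\infty(Q_T)}\iint_{Q_{s,T}}|Du|\,dxdt.
\end{equation*}

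The last term is handled by Young's inequality with conjugate exponents $\gamma$ and $\gamma'$:
\begin{equation*}
\|Da\|_{L^\infty(Q_T)}\iint_{Q_{s,T}}|Du|\,dxdt \le \frac{1}{2C_H}\iint_{Q_{s,T}}|Du|^\gamma\,dxdt + C(\|Da\|_{L^\infty(Q_T)}, C_H, \gamma)\,T,
\end{equation*}
so that the gradient term on the right can be absorbed into the left-hand side. Finally, letting $s \to 0^+$ and using the continuity $u \in C(\overline Q_T)$ to pass to the limit in the boundary term $\int_\T u(x,s)\,dx$, I arrive at $\iint_{Q_T}|Du|^\gamma\,dxdt \le C$ with $C$ depending only on the listed data.

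I do not foresee any real obstacle: the only delicate point is confirming that $\varphi \equiv 1$ is a legitimate test function and that the time-derivative pairing produces genuine boundary values, but this is automatic from the embedding $\H_2^1(Q_{s,T}) \hookrightarrow C([s,T];L^2(\T))$ invoked earlier in the paper. The estimate is uniform in $s$, so the limit $s \to 0$ does not require any extra work beyond continuity of $u$ at $t=0$.
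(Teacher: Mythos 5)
Your proof is correct and follows essentially the same route as the paper: test the weak formulation with $\varphi \equiv 1$, invoke the coercivity of $H$ from \eqref{H}, absorb the $\partial_j a_{ij}\,\partial_i u$ term via Young's inequality, bound the $f$-term by its integral norm, and let $s\to 0$ using continuity of $u$ up to $t=0$. The only cosmetic difference is that you estimate the $f$-term via $\|f\|_{L^1(Q_T)}$ by Hölder, while the paper uses Young's inequality to get $\iint|f|^q + T$; both are controlled by $\|f\|_{L^q(Q_T)}$ on the bounded cylinder, so this is immaterial.
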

\begin{proof}
Plugging $\varphi \equiv 1$ as a test function in the weak formulation of \eqref{hjb} we obtain, for $s > 0$,
\[
\int_{\T}u(x, T) \,dx - \int_{\T}u(x, s) \,dx +  \iint_{Q_{s, T}}  \partial_i u \, \partial_j(a_{ij}) + H(x, Du) \, dxdt =  \iint_{Q_{s, T}} f dxdt
\]
Hence, using \eqref{H}, and Young's inequality we get
\begin{multline*}
C_H\iint_{Q_{s, T}}|Du|^{\gamma}\,dxdt\leq \|u(\cdot, T) \|_{C(\T)} + \|u(\cdot, s) \|_{C(\T)} + \frac{C_H}{2}\iint_{Q_{s, T}}|Du|^{\gamma}\,dxdt \\ + C T \|Da_{ij}\|^{\gamma'}_{L^\infty(Q_{s, T})} + \iint_{Q_{s, T}} |f|^q dxdt + T + C_H^{-1} T.
\end{multline*}
Therefore, we conclude by passing to the limit $s \to 0$.

\end{proof}

We are now ready to prove the main theorem on Lipschitz regularity stated in the introduction.

\begin{proof}[Proof of Theorem \ref{LipReg}] For $t_1 \in (0,T)$, let $\eta=\eta(t)$ be a non negative smooth function on $[0,T]$ satisfying $\eta(t) \le 1$ for all $t$, $\eta(t) \equiv 1$ on $[t_1, T]$ and vanishing on $[0, t_1/2]$. Then, Theorem \ref{StimaLip} yields $u(\cdot, \tau) \in W^{1,\infty}(\T)$ for all $\tau \in (0,T)$, and the existence of $C > 0$ (depending on the data and $\eta$, so $t_1$ itself) such that
\[
\eta(\tau) \| D u(\cdot, \tau)\|_{L^{\infty}(\T)} \le C \big( \|D a\|_{L^\infty(Q_\tau)} \|\eta Du\|_{L^{(d+2)(\gamma-1)}(Q_\tau)} + 1 \big)
\]
for all $\tau \in [0,T]$. If $(d+2)(\gamma-1) \le \gamma$, we immediately conclude \eqref{locreg} using Lemma \ref{StimaDu}. Otherwise, by interpolation of $L^{(d+2)(\gamma-1)}(Q_\tau)$ between $L^\gamma(Q_\tau)$ and $L^\infty(Q_\tau)$ we get
\[
\eta(\tau) \| Du(\cdot, \tau)\|_{L^{\infty}(\T)} \le C \left( \|D a\|_{L^\infty(Q_\tau)} \|\eta Du\|_{L^\infty(Q_\tau)}^{1-\frac{\gamma}{(d+2)(\gamma-1)}}\|\eta Du\|_{L^\gamma(Q_\tau)}^{\frac{\gamma}{(d+2)(\gamma-1)}} + 1 \right),
\]
that implies  \eqref{locreg} after passing to the supremum with respect to $\tau \in (0, T)$, and again using Lemma \ref{StimaDu} to control $\|\eta Du\|_{L^\gamma(Q_\tau)}$.

\medskip

To prove the global in time bound \eqref{globreg} one may follow the same lines, using $\eta \equiv 1$ on $[0, T]$ instead. Being the solution global, $s = 0$ can indeed be chosen throughout the proof of Theorem \ref{StimaLip}, and norms $\|u\|_{C(\overline Q_T)}$ can be replaced by $\|u_0\|_{C(\T)}$ in view of Proposition \ref{stimeu}. Note that an additional term $\int_{\T}(u(x+h,0)-u(x,0)){\rho}(x,0)dx $ pops up in \eqref{step1}: this can be easily bounded by $\|D u_0\|_{L^\infty(\T)}$.

\medskip

Finally, if $a_{ij}(x,t) = A_{ij}$ on $Q_T$ for some $A_{ij}$ satisfying \eqref{A1}, then $D a \equiv 0$ on $Q_T$, and we obtain the same conclusion, exploiting the fact that Theorem \ref{StimaLip} does not require anymore $\sP = \sQ$.

\end{proof}

\subsection{Beyond Lipschitz regularity}

Once Lipschitz regularity is established, one can deduce further properties of weak solutions. Indeed, the viscous HJ equation \eqref{hjb} can be treated in terms of regularity as a linear equation, being the $H(x, Du)$ term (locally in time) bounded in $L^\infty$. Thus, the classical Calder\'on-Zygmund parabolic theory applies, and the so-called maximal regularity for $u$ follows, i.e.: $\partial_t u, \partial_{ij} u, H(x, Du) \in L^q$.

\begin{cor} Under the assumptions of Theorem \ref{LipReg}, any local weak solution $u$ of \eqref{hjb} is a strong solution belonging to $W^{2,1}_q(\T\times(t_1, T))$ for all $t_1 \in (0,T)$, namely it solves \eqref{hjb} almost everywhere in $Q_T$.
\end{cor}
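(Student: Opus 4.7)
The plan is to interpret the viscous Hamilton-Jacobi equation as a linear parabolic equation once Lipschitz regularity is available, and then invoke the standard $L^q$ maximal parabolic regularity (Calder\'on-Zygmund theory). First, by Theorem \ref{LipReg}, for every $t_1 \in (0, T)$ we have $Du \in L^\infty(\T \times (t_1, T))$. Combining this with the upper growth condition in \eqref{H} yields
\[
\|H(\cdot, Du)\|_{L^\infty(\T \times (t_1, T))} \le C_H(\|Du\|_{L^\infty}^\gamma + 1) < \infty.
\]
Setting $g := f - H(\cdot, Du)$, we have $g \in L^q(\T \times (t_1, T))$ since $f \in L^q(Q_T)$ and $H(\cdot, Du) \in L^\infty$ there, and $u$ satisfies, in the weak sense on $\T \times (t_1, T)$, the linear equation $\partial_t u - a_{ij} \partial_{ij} u = g$.

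To circumvent the problem that the trace $u(\cdot, t_1)$ is only known to lie in $W^{1,\infty}(\T)$ (which for $q > d+2 \ge 4$ is not enough to sit in the natural trace space $W^{2-2/q,q}(\T)$ for $W^{2,1}_q$), I would use a time cutoff. Fix $0 < t_0 < t_1 < T$, pick a smooth $\chi : [t_0, T] \to [0, 1]$ with $\chi \equiv 0$ in a neighborhood of $t_0$ and $\chi \equiv 1$ on $[t_1, T]$, and set $v := \chi u$. Then $v$ is a weak solution of
\[
\partial_t v - a_{ij} \partial_{ij} v = \chi g + \chi' u \qquad \text{on } \T \times (t_0, T), \qquad v(\cdot, t_0) = 0.
\]
The right-hand side belongs to $L^q(\T \times (t_0, T))$ (recall that $u$ is continuous and hence bounded on $\overline{Q_T}$ by Definition \ref{wsol}), and the initial condition lies in any trace space one wishes.

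At this point the result follows from the standard $L^q$ maximal regularity for linear parabolic Cauchy problems with continuous $a_{ij}$ satisfying \eqref{A1}: there exists a unique strong solution $\tilde v \in W^{2,1}_q(\T \times (t_0, T))$, and since weak solutions to this Cauchy problem with zero initial datum are unique (by a routine duality/energy argument using that $a_{ij}$ is Lipschitz in $x$ and continuous in $t$), we conclude $v = \tilde v \in W^{2,1}_q$. Restricting to $[t_1, T]$, where $v = u$, gives $u \in W^{2,1}_q(\T \times (t_1, T))$, whence the equation \eqref{hjb} holds almost everywhere. The main (mild) subtlety is the uniqueness step needed to identify the weak solution $\chi u$ with the strong solution produced by maximal regularity; everything else is an application of tools already set up in Section \ref{basic} together with Theorem \ref{LipReg}.
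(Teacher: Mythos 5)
Your proof is correct and follows essentially the same strategy as the paper: once Lipschitz regularity is known, $H(\cdot, Du)$ is bounded, so the equation is read as a linear parabolic equation with right-hand side $f - H(\cdot, Du) \in L^q$, and Calder\'on–Zygmund maximal regularity together with uniqueness of weak solutions yields $u \in W^{2,1}_q$ locally in time. The only (minor) technical divergence is that you handle the lack of trace regularity at $t = t_1$ via a time cutoff $\chi$ and then apply the global maximal-regularity Lemma \ref{parabolicreg} with zero initial datum, whereas the paper starts the linear auxiliary problem at $t_1/2$ with $L^2$ initial datum and invokes local parabolic estimates from \cite[Theorem IV.10.1]{LSU} to obtain interior $W^{2,1}_q$ regularity on $(t_1,T)$; both routes are standard and equivalent in outcome.
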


\begin{proof} For any $t_1 > 0$, Theorem \ref{LipReg} yields $H(x, Du(x,t)) \in L^\infty(\T\times(t_1/2, T))$. Therefore, since $f \in L^q(\T\times(t_1/2, T))$ and $q > d+2$, there exists a weak (energy) solution $v$ to the linear equation
\begin{equation}\label{lineq}
\ds \partial_tv(x,t) - \sum_{i,j=1}^d a_{ij}(x,t) \partial_{ij}v(x,t) = - H(x, Du(x,t)) + f(x,t) \qquad \text{$\in L^q(\T\times(t_1/2, T))$,} 
\end{equation}
that satisfies $v(t_1/2) = u(t_1/2)$ in the $L^2$-sense, and enjoys the additional strong regularity property $W^{2,1}_q(\T\times(t_1, T))$. This can be proven using, e.g., local estimates in \cite[Theorem IV.10.1]{LSU}. Since weak solutions to \eqref{lineq} are unique, $u$ coincides a.e. with $v$ on $\T\times(t_1, T)$, and we obtain the assertion.
\end{proof}


%

\subsection{Some remarks on the exponents $\sP$, $\sQ$, $q$}\label{PQq}


In the following remarks, we stress the importance of the condition $D_p H \in L^{\mathpzc{Q}}(L^{\mathpzc{P}}(\T))$ with $\sP$, $\sQ$ satisfying
\begin{equation}\label{as2}
\frac{d}{2\mathpzc{P}} + \frac{1}{\mathpzc{Q}}  \le \frac{1}{2}.
\end{equation}
Not only it guarantees Lipschitz regularity of $u$, but is also related to uniqueness of solutions in the distributional sense. In the following examples it is indeed possible to observe multiplicity of solutions; among them, there is one that is a local weak, Lipschitz continuous solution, while the other(s) are not, showing therefore that Lipschitz regularity for positive times stated in Theorem \ref{LipReg} fails in general without extra integrability properties of $D_pH(x,Du)$.
 \\
We will also comment on  the condition $f \in L^q(Q_T)$, $q > d+2$.

\begin{rem}\label{rem1} We consider first the super-quadratic regime $\gamma > 2$. For $\mathpzc{Q} = \infty$, \eqref{as2} reads
\[
D_p H(x, Du) \in L^{\infty}(L^{\sP}(\T)) \qquad \text{ for some  } \quad \sP\ge d.
\]
Let $a_{ij}=\delta_{ij}$ and $H(x, p) = |p|^\gamma$, $\gamma > 2$. For $c, \alpha > 0$, we consider the (time-independent) function
\[
u_1(x,t)=c\psi(x)|x|^{\alpha}\qquad \text{on $Q_T$,}
\]
where $\psi$ is a smooth function having support on $B_{1/2}(0)$ and is identically one in $B_{1/4}(0)$. Note that $\psi$ has the role of a localizing term only, so that $u_1(x,t)$ is a representative on $[-1/2,1/2]^d$ of a periodic function on $\R^d$. If we let
\[
\qquad \alpha=\frac{\gamma-2}{\gamma-1}, \qquad c = \frac{(d+\alpha-2)^{\frac{1}{\gamma-1}}}{\alpha}
\]
then $u_1$ solves, for some $f_1 \in L^\infty(\T)$ (that vanishes on $B_{1/4}(0)$)
\begin{equation}\label{patologic}
\begin{cases}
\ds \partial_tu - \Delta u(x,t) + |Du(x,t)|^\gamma = f_1(x) \\
u(x, 0) = c\psi(x)|x|^{\alpha},
\end{cases}
\end{equation}
in the sense that it satisfies all the requirements in Definition \ref{wsol}, except the Aronson-Serrin condition \eqref{LqLp}-\eqref{conditionAS}. More precisely,
\[
(\gamma-1)|Du|^{\gamma-1} = |D_p H(x, Du)| \in L^{\infty}(0,T;L^{\sP}(\T)) \qquad \text{ if and only if  } \quad \sP< d.
\]
Moreover, $u_1(\cdot, \tau)$ is clearly not Lipschitz continuous for any $\tau \in [0, T]$. 

Note that $u(x, 0)  \in C(\T)$ and $f_1 \in L^\infty(Q_T)$, so by Theorem \ref{texun} there exists a unique solution to \eqref{patologic} in the sense of Definition \ref{wsol}. Thus, \eqref{patologic} admits two distinct strong solutions, but only the one satisfying fully the Definition \ref{wsol}, in particular the crucial integrability condition on $D_p H(x, Du)$, enjoys Lipschitz regularity.
\end{rem}

\begin{rem}\label{rem1bis} In the sub-quadratic regime $1 + 2/(d+2) < \gamma < 2$, for $a_{ij}=\delta_{ij}$ and $H(x, p) = |p|^\gamma$, we can produce an energy solution to \eqref{hjb} such that $D_p H(x, Du) \in L^{\mathpzc{Q}}(0,T; L^{\mathpzc{P}}(\T))$ if and only if 
\[
\frac{d}{2\mathpzc{P}} + \frac{1}{\mathpzc{Q}}  > \frac{1}{2},
\]
that is not Lipschitz continuous, and not even bounded in $L^\infty$ uniformly on $\overline Q_T$. It then satisfies all requirements of Definition \ref{wsol} except the Aronson-Serrin condition \eqref{LqLp}-\eqref{conditionAS} and the continuity up to $t = 0$: the initial datum is assumed in the $L^2$-sense only.

The construction of such a $u$ is based on the existence, for $k > 0$ small, of $U \in C^2(0,\infty) \cap C^1[0,\infty)$ to the Cauchy problem
\[
\begin{cases}
U''(y) + \left(\frac{d-1}y + \frac y 2 \right)U'(y) + U(y) + |U'(y)|^\gamma = 0 & \text{for $0 < y < \infty$} \\
U'(0) = 0 \\
U(0) = \alpha_0,
\end{cases}
\]
for some $\alpha_0 > 0$, that satisfies for some positive $c$
\[
|U(y)| + |U'(y)| + |U''(y)| \le c e^{-y} \qquad \text{as $y \to \infty$}. 
\]
The existence of  such a $U$ is proven in \cite[Section 3]{BaSW}, see in particular Theorem 3.5, Proposition 3.11, Proposition 3.14 and Remark 3.8 (see also \cite{LM17}). As in our Remark \ref{rem1}, we need a smooth localization term $\psi$ having support on $(-1/2, 1/2)$ and identically one in $[-1/4, 1/4]$. Let then 
\[
u_2(x,t)=-t^{-\sigma}U(|x|\,t^{-1/2})\psi(|x|),\qquad \sigma = \frac{2-\gamma}{2(\gamma-1)}.
\]
We have that $u_2$ is a classical solution to 
\begin{equation}\label{hj5}
\ds \partial_tu(x,t) - \Delta u(x,t) + |Du(x,t)|^\gamma = f_2(x,t),
\end{equation}
where $u_2(0) = 0$ in the $L^2$-sense (since $\gamma > 1 + 2/(d+2)$). Moreover,
\begin{align*}
& f_2(x, t) =
-t^{-\sigma - 1}\bigg\{ \bigg[U''(|x|\,t^{-1/2}) + \left(\frac{d-1}{|x|\,t^{-1/2}} + \frac {|x|\,t^{-1/2}} 2 \right)U'(|x|\,t^{-1/2}) + kU(|x|\,t^{-1/2})\bigg]\psi(|x|)  \\
& \qquad\qquad\qquad + \Big|U'(|x|\,t^{-1/2})\psi(|x|)  + t^{1/2}U(|x|\,t^{-1/2}) \psi'(|x|)\Big|^\gamma \\
& \qquad\qquad\qquad + 2t^{1/2}U'(|x|\,t^{-1/2}) \psi'(|x|) + tU(|x|\,t^{-1/2}) \psi''(|x|)+ \frac{d-1}{|x|}t U(|x|\,t^{-1/2}) \psi'(|x|) \bigg\}.
\end{align*}
Note that $f_2(x, t)$ is identically zero on $|x| \le 1/4$ and $|x| \ge 1/2$; otherwise, it is bounded in $L^\infty$, since $|U(|x|\,t^{-1/2})| + |U'(|x|\,t^{-1/2})| + |U''(|x|\,t^{-1/2})| \le c e^{-t^{-1/2}/4}$. Therefore, one should expect the existence of a weak solution to the HJ equation \eqref{hj5} with zero initial datum that is Lipschitz continuous on the whole $Q_T$ (by Theorem \ref{texun}), but such a solution cannot be $u_2$, since $u_2(t)$ becomes unbounded as $t \to 0$.

\end{rem} 

\begin{rem}\label{rem2} To have Lipschitz bounds for solutions to \eqref{hjb}, one cannot avoid in general the condition
\begin{equation}\label{qd2}
f \in L^q(Q_T) \quad \text{for some} \quad q > d+2.
\end{equation}
This constraint is actually imposed by the linear (heat) part of \eqref{hjb}. Consider indeed $a_{ij}=\delta_{ij}$ and $H(x, p) = |p|^\gamma$, $\gamma > 1$. For $T>0$, let $\chi \in C^\infty_0(\R^d)$, $\Gamma(x,t)$ be fundamental solution of the heat equation in $\R^d$, $f_3(x,t) := \chi(x/\sqrt{T-t})[\sqrt{T-t} \, \log(T-t)]^{-1}$ and $u_3$ be the function
\[
u_3(x,t) := \iint_{\R^d \times (0,t)} f_3(y,s) \Gamma(x-y,t-s) \, dyds \qquad \text{on $Q_T$}
\]
Clearly, $u_3$ is a classical solution to
\[
\begin{cases}
\ds \partial_tu(x,t) - \Delta u(x,t) + |Du(x,t)|^\gamma = f_3(x,t) + |Du_3(x,t)|^\gamma \\
u(x, 0) = 0,
\end{cases}
\]
$f_3 \in L^q(Q_T)$ for all $q \le d + 2$ and $|Du_3|^\gamma \in L^\infty(0, T; L^\beta(\T))$ for all $\beta < \infty$. In turn, we have that  $\|D u_3(\cdot, t)\|_\infty \to \infty$ as $t \to T$. Note that this example can be recast into the periodic setting by multiplying $u_3$ by a cut-off function $\psi$, as in the previous remarks.

Therefore, with respect to integrability requirements on $f$, Theorem \ref{apriori} is optimal, at least when $\gamma < 3$, namely when $d+2 \ge \frac{d+2}{2(\gamma'-1)}$. We do not know whether \eqref{qd2} is enough also when $\gamma \ge 3$.
\end{rem}

\section{Existence and uniqueness of solutions}\label{exun}

This section is devoted to the proof of existence and uniqueness of solutions to the HJ equation \eqref{hjb}. 

\begin{proof}[Proof of Theorem \ref{texun}]
{\bf Existence. }We start with a sequence of classical solutions $u_n$ to regularized problems
\[
\begin{cases}
\ds \partial_tu_n(x,t) - \sum_{i,j=1}^d a_{ij}(x,t) \partial_{ij}u_n(x,t) + H(x, Du_n(x,t)) = f_n(x,t) & \text{in $Q_T = \T \times (0, T)$,} \\
u_n(x, 0) = u_{n,0}(x) & \text{in $\T$,}
\end{cases}
\]
where $f_n, u_{n,0}$ are smooth functions converging to $f, u_{0}$ in $L^q(Q_T), C(\T)$ respectively. The existence of solutions to the regularized equations can be proven using standard methods, as detailed in \cite{TesiAle} (see also \cite{CiGo}).

The global bound on $\|u_n\|_{C(\overline Q_T)}$ depending on $\|u_0\|_{C(\T)}$ (see Proposition \ref{stimeu}) and the local in time Lipschitz estimate \eqref{locreg} hold, namely, for any fixed $t_1 > 0$,
\[
\|D u_n(\cdot, t)\|_{L^\infty(\T)} \le C_{t_1} \qquad \text{for all} \quad t \in [t_1, T].
\]
Hence, since $f_n$ is equibounded in $L^q(Q_T)$, $u_n$ is equibounded in $W^{2,1}_q(Q_{t_1, T})$ by standard maximal parabolic regularity (e.g. \cite[Theorem IV.10.1]{LSU}). Then, weak limits $\partial_t u, D^2 u$ exist (up to subsequences), and are in $L^q$ locally in time. Moreover, since $q > d+2$, parabolic embeddings of $W^{2,1}_q$ (see e.g. \cite{CiGo, LSU,TesiAle}) guarantee that $u_n$ and $D u_n$ are equibounded and equicontinuous in $[t_1, T]$ for all $t_1 > 0$. Therefore, Ascoli theorem and a further diagonalization argument imply that, again up to subsequences, $u_n$ converges uniformly on $[t_1, T]$ for all $t_1 > 0$ to some limit $u$, and the same convergence holds for $Du_n$. Note that the desired limit equation is locally satisfied in the strong sense, namely a.e. on $Q_T$. 

To prove that $u$ is a local weak solution, it just remains to show that it is continuous up to $t = 0$. This is a delicate step since the control on $Du$ deteriorates as $t \to 0$. We start with the l.s.c. inquality
\[
u_0(x_0) \le \liminf_{ \substack{ x \to x_0 \\ t \to 0 } } u(x, t) \qquad \forall x_0.
\]
The following fact will be crucial: for all $(\bar x, \bar t) \in Q_T$, there exists $\rho = \rho_{\bar x, \bar t} \in C^{\frac12 \wedge \frac1\gamma}([0, \bar t], \mathcal P(\T)) \cap \mathcal{H}_{q'}^1(Q_{\bar t})$ such that $\rho_{\bar x, \bar t}(\bar t) = \delta_{\bar x}$ and 
\begin{equation}\label{uuscin}
u(\bar x, \bar t) \ge \int_\T u_0(x)  \rho_{\bar x, \bar t}(0, dx) + \iint_{Q_{\bar t}} f(x,t) \rho_{\bar x, \bar t}(x,t) dx dt - C_L \bar t,
\end{equation}
and $\rho_{\bar x, \bar t}$ is bounded in $C^{\frac12 \wedge \frac1\gamma}([0, \bar t], \mathcal P(\T)) \cap \mathcal{H}_{q'}^1(Q_{\bar t})$ uniformly in $(\bar x, \bar t)$. Indeed, let $u_n$ be as in the previous part of the proof, and $\rho_n$ be the corresponding adjoint variable solving \eqref{fplocal}, where $\rho_n(\bar t)$ is any sequence converging to $\delta_{\bar x}$ in the sense of measures. By duality (see Lemma \ref{representationformulaHJ}) we get
\[
\int_{\T}u_n(x,\bar t)\rho_n(x,\bar t)=\int_{\T}u_{n,0}(x)\rho_n(x,0)dx +\iint_{Q_{\bar t}}\Big(L(x, D_pH(x,Du_n))\rho_n dxdt + f_n \rho_n \Big)\,  dxdt.
\]
Moreover, $\rho_n$ is bounded in $C^{\frac12 \wedge \frac1\gamma}([0, \bar t], \mathcal P(\T)) \cap \mathcal{H}_{q'}^1(Q_{\bar t})$ by means of Corollaries \ref{EstimateK1} and \ref{EstimateK3}, and these bounds do not depend on $\rho_n(\bar t)$ nor on $(\bar x, \bar t)$. By \eqref{L1}, $L \ge -C_L$. Moreover, $u_{n,0}(\cdot)$ and $u_n(\cdot,\bar t)$ converge uniformly in $\T$, $\rho_n(t)$ converges in the sense of measures, $f_n$ converges strongly to $f$ in $L^q(Q_{\bar t})$ while $\rho_n$ enjoys the same convergence in the weak $L^{q'}$ sense, eventually up to subsequences (actually it could be made strong convergence by compact embeddings of parabolic spaces). Hence we obtain \eqref{uuscin} by passing to the limit $n \to \infty$.

Fix now $x_0 \in \T$, and let $(\bar x_m, \bar t_m)$ be any sequence such that $(\bar x_m, \bar t_m) \to (x_0, 0)$. By adding $u_0(x_0)$ to both sides of \eqref{uuscin}, rearranging the terms and using H\"older's inequality, we have
\begin{multline*}
u_0(x_0) \le u(\bar x_m, \bar t_m) \\ + \|f\|_{L^q(\T \times (0, \bar t_m))} \| \rho_{\bar x_m, \bar t_m} \|_{L^{q'}(\T \times (0, \bar t_m))} + C_L \bar t_m + \int_\T u_0(x) \big(\delta_{x_0} - \rho_{\bar x_m, \bar t_m}(0) \big)(dx).
\end{multline*}
On one hand, $\|f\|_{L^q(\T \times (0, \bar t_m))}\to 0$ as $\bar t_m \to 0$, while $ \| \rho_{\bar x_m, \bar t_m} \|_{L^{q'}}$ is equibounded. On the other hand, as $\bar x_m \to x_0$,
\[
\int_\T u_0(x) \big(\delta_{x_0} - \rho_{\bar x_m, \bar t_m}(0) \big)(dx) = u_0(x_0) - u_0(\bar x_m) + \int_\T u_0(x) \big(\rho_{\bar x_m, \bar t_m}(\bar t_m) - \rho_{\bar x_m, \bar t_m}(0) \big)(dx) \to 0,
\]
by continuity of $u_0$, and the fact that ${\bf d_1}(\rho_{\bar x_m, \bar t_m}(\bar t_m),  \rho_{\bar x_m, \bar t_m}(0))\leq C|\bar t_m|^{\frac12 \wedge \frac1\gamma} \to 0$ implies the convergence of $\rho_{\bar x_m, \bar t_m}(\bar t_m)$ to $\rho_{\bar x_m, \bar t_m}(0)$ in the weak sense of measures. We then get the claimed lower semicontinuity of $u$ on $\overline Q_T$.

The reverse inequality 
\[
u_0(x_0) \ge \limsup_{ \substack{ x \to x_0 \\ t \to 0 } } u(x, t) \qquad \forall x_0
\]
can be obtained following analogous lines: instead of testing the approximating equation for $u_n$ by solutions $\rho_n$ to the adjoint Fokker-Planck equation, it is sufficient to use 
\[
-\partial_t\mu_n(x,t)-\sum_{i,j}\partial_{ij}\big(a_{ij}(x,t)\mu_n(x,t)\big)=0 \qquad \text{ on }Q_{\bar t}\ ,\\
\]
i.e. a solution of a Fokker-Planck equation of the form \eqref{fplocal} with drift $b \equiv 0$, such that $\mu_n(\bar t)$ converges to $\delta_{\bar x}$ in the sense of measures. By duality with $u_n$ and $H \ge -C_H$, it holds
\begin{equation*}
\int_{\T}u_n(x,\bar t)\mu_{n}(x, \bar t)dx \le \int_{\T}u_0(x)\mu_n(x,0)dx+\iint_{Q_{\bar t}}f_n \mu_n dxdt + C_H \bar t, 
\end{equation*}
and by taking limits
\begin{equation*}
u(\bar x, \bar t) \le \int_\T u_0(x)  \mu_{\bar x, \bar t}(0, dx) +\iint_{Q_{\bar t}}f \mu dxdt + C_H \bar t,
\end{equation*}
so it is possible to proceed as before.

%

\medskip

{\bf Uniqueness.} Consider two solutions $u_1, u_2$ of the HJ equation, and take their difference $w := u_1 - u_2$ on $\overline Q_T$. Let $\tau \in (0, T]$. By convexity of $H(x, \cdot)$, $w$ solves
\[
\int_s^\tau \langle \partial_t w(t), \varphi(t) \rangle dt +  \iint_{  \T\times(s, \tau)}  \partial_i w \, \partial_j(a_{ij} \varphi) + D_p H(x, Du_2) \cdot Dw \, \varphi \, dxdt \le  0
\]
for all $s \in (0, \tau)$, and $w(\cdot, 0) = 0$. Let now $\rho$ be adjoint variable with respect to $u_2$, namely $\rho$ be the weak solution to 
\begin{equation}\label{fkpu}
\begin{cases}
\ds -\partial_t\rho(x,t)-\sum_{i,j=1}^d\partial_{ij}(a_{ij}(x,t) \rho(x,t))-\dive\big(D_pH(x,Du_2(x,t))\, \rho(x,t)\big)=0&\text{ in }Q_\tau\ ,\\
\rho(x,\tau)=\rho_\tau(x)&\text{ on }\T\ ,
\end{cases}
\end{equation}
for some non-negative and smooth probability density $\rho_\tau$. Then, by duality we get
\[
\int_\T w(x,\tau) \rho_\tau(x) dx \le \int_\T w(x,s) \rho(x,s) dx.
\]
Since $w \in C(\overline Q_T)$, it is uniformly continuous on $\overline Q_T$, so $w(\cdot, t) \to w(\cdot, 0) \equiv 0$ uniformly in $\T$. Moreover, $\int_\T w(x,s) \rho(x,s) dx = \int_\T [w(x,s) - w(x,0)] \rho(x,s) dx$. Thus, by H\"older's inequality and $\|\rho(s)\|_{L^1(\T)} = 1$, $\int_\T w(s) \rho(s) \to 0$, yielding
\[
\int_\T w(x,\tau) \rho_\tau(x) dx \le 0
\]
for arbitrary $\rho_\tau$. As $\rho_\tau$ varies, $u_1(\tau) \le u_2(\tau)$ follows, and by exchanging the role of $u_1$ and $u_2$ and varying $\tau$, we eventually obtain $u_1 \equiv u_2$.

\medskip

{\bf Additional regularity. } When $u_0 \in W^{1,\infty}$, using global Lipschitz bounds \eqref{globreg} one can bring Lipschitz (and further) regularity of $u_n$ to the limit solution $u$ on the whole time interval $[0, T]$.

\end{proof}

\begin{rem} Note that the uniqueness proof works in the sub-quadratic case $\gamma \le 2$ if one requires $u_0 \in L^\infty(\T)$ and $u_i (s) \stackrel{\ast}{\rightharpoonup} u_0$ in $L^\infty$ only. This follows by the fact that $\rho$ in \eqref{fkpu} can be proven (as in Proposition \ref{EstimateK}) to satisfy $\int_0^T \int |D_pH(x,Du_2)|^{\gamma'} \rho < \infty$. When $\gamma' \ge 2$, then $\rho \in C([0,T], L^1(\T))$ by \cite[Theorem 3.6]{Por15}. Strong convergence of $\rho(s)$ in $L^1$ and weak-* convergence of $u_1(s) - u_2(s)$ is then enough to have  $\int_\T w(s_n) \rho(s_n) \to 0$ along some sequence $s_n \to 0$. We believe that existence and Lipschitz regularity of solutions could be addressed in this weaker framework, but this is a bit beyond the scopes of this paper. Nevertheless, these considerations are in line with the principle that in the super-quadratic case $\gamma > 2$, the HJ equation ``sees points'' \cite{CSil}, and thus requires $u_0$ to be continuous in order to be well-posed, while for $\gamma \le 2$ it may be enough to have informations a.e. at initial time. 
\end{rem}

\section{A priori estimates: Bernstein's and the adjoint methods}\label{bernst}

This section is devoted to the proof of Theorem \ref{apriori}, and complements regularity results of the previous section. Here, $u$ is a classical solution to \eqref{hjb}. This will allow to perform the Bernstein's method, namely to analyse the equation satisfied by $|Du|^2$. The adjoint of such an equation is basically \eqref{fplocaladjoint}. As before we will exploit the interplay between the equation itself and its adjoint.

We will assume that $a_{ij} \in C([0, T]; C^{1}(\T))$ and satisfies \eqref{A1}, $H \in C^2(\T\times\R^d)$ and satisfies \eqref{H}, $f \in C([0, T]; C^{1}(\T))$, $u_0 \in C^1(\T)$ and 
\[ q > \min\left\{d+2, \frac{d+2}{2(\gamma'-1)}\right\}.
\]
As before, for any fixed $\tau \in (0, T)$, $\rho_\tau\in C^{\infty}(\T)$, $\rho_\tau\geq0$ with $\|\rho_\tau\|_{L^1(\T)}=1$, let $\rho$ be the (classical) solution to \eqref{fplocaladjoint}. Note that Proposition \ref{stimeu}, Lemma \ref{representationformulaHJ} and Proposition \ref{EstimateK} apply. We start with a revised version of Corollary \ref{EstimateK1}. 

\begin{cor}\label{EstimateK2}
Let $u$ and $\rho$ be (classical) solutions to \eqref{hjb} and \eqref{fplocaladjoint} respectively. Let $\bar q$ be such that
\[
\bar q >  \frac{d+2}{2(\gamma'-1)}.
\]
Then, there exist constants $C > 0$ and $0<\delta<1$ such that
\begin{equation*}
\|\rho\|_{\mathcal{H}_{\bar q'}^1(Q_\tau)}\leq C\big( \|Du\|^{1-\delta}_{L^\infty(Q_\tau)} + 1\big)\ ,
\end{equation*}
where ${C}$ depends in particular on $\lambda, \|a\|_{C(W^{1,\infty})}$, $C_H$, $\norm{u_0}_{C(\T)}$, $\norm{f}_{L^{q}(Q_T)}$, $\bar q , d, T$ (but not on $\tau, \rho_\tau$).
\end{cor}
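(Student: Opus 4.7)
The plan is to rerun the argument for Corollary \ref{EstimateK1} while tracking the $L^\infty$ norm of $Du$ in the regime where the exponent $r' = 1 + (d+2)/\bar q$ produced by Proposition \ref{estFP2} exceeds $\gamma'$. Throughout, we use implicitly $\bar q > d+2$, which is automatic from the hypothesis $\bar q > (d+2)/(2(\gamma'-1))$ whenever $\gamma \ge 3$ and is the standing constraint in Theorem \ref{apriori} for $\gamma < 3$. Under $\bar q > d+2$, Proposition \ref{estFP2} applies with $q = \bar q$ and gives
\[
\|\rho\|_{\mathcal{H}_{\bar q'}^1(Q_\tau)} \le C\left(\iint_{Q_\tau} |D_pH(x, Du)|^{r'}\,\rho\,dxdt + 1\right), \qquad r' = 1 + \frac{d+2}{\bar q}.
\]

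First, in the case $r' \le \gamma'$ (equivalently $\bar q \ge (d+2)/(\gamma'-1)$), Young's inequality yields $|D_pH(x,Du)|^{r'} \le |D_pH(x,Du)|^{\gamma'} + C$, and Proposition \ref{EstimateK} combined with Proposition \ref{stimeu} bounds the right-hand side by a constant depending only on $\|u_0\|_{C(\T)}$ and the other admissible data. Any choice $\delta \in (0,1)$ then makes the claimed inequality trivially true, with no dependence on $\|Du\|_\infty$ whatsoever.

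The novel situation is $r' > \gamma'$, i.e.\ $\bar q < (d+2)/(\gamma'-1)$. Here one factors $|D_pH|^{r'} = |D_pH|^{\gamma'}\,|D_pH|^{r'-\gamma'}$, uses the pointwise growth bound in \eqref{H} to write $|D_pH(x,Du)|^{r'-\gamma'} \le C(\|Du\|_{L^\infty(Q_\tau)}^{(\gamma-1)(r'-\gamma')} + 1)$, and appeals once more to Proposition \ref{EstimateK} (together with Proposition \ref{stimeu}) to arrive at
\[
\iint_{Q_\tau} |D_pH|^{r'}\,\rho\,dxdt \le C\bigl(\|Du\|_{L^\infty(Q_\tau)}^{(\gamma-1)(r'-\gamma')} + 1\bigr).
\]

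The only real verification is algebraic: using $\gamma'-1 = 1/(\gamma-1)$ one computes $(\gamma-1)(r'-\gamma') = -1 + (\gamma-1)(d+2)/\bar q$, and the assumption $\bar q > (d+2)/(2(\gamma'-1)) = (d+2)(\gamma-1)/2$ is precisely what forces this exponent strictly below $1$. Setting $\delta := 2 - (\gamma-1)(d+2)/\bar q$ one checks $\delta \in (0,1)$ (the upper bound coming from $\bar q < (d+2)/(\gamma'-1)$ in this case), which gives the claim. No deep obstacle is anticipated; the only bookkeeping point is that, because $u$ is classical, Proposition \ref{stimeu} allows one to replace the full sup-norm $\|u\|_{C(\overline Q_T)}$ appearing in Proposition \ref{EstimateK} by $\|u_0\|_{C(\T)}$, matching the dependence claimed in the statement.
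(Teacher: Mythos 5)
Your proof is correct and follows the same approach as the paper: apply Proposition \ref{estFP2} with exponent $\bar q$, then absorb the excess power of the drift into $\|Du\|^{1-\delta}_{L^\infty}$ using Proposition \ref{EstimateK}. The paper carries out the splitting directly in terms of $|Du|$, writing $|Du|^{(\gamma-1)r'} = |Du|^{1-\delta}\,|Du|^{(\gamma-1)r'-1+\delta}$ and then applying \eqref{estik1}, whereas you factor $|D_pH|^{r'}=|D_pH|^{\gamma'}|D_pH|^{r'-\gamma'}$; for your explicit choice $\delta = 2 - (\gamma-1)(d+2)/\bar q$ the two decompositions coincide via \eqref{H}, and your algebraic check of $\delta\in(0,1)$ matches the constraint the paper imposes implicitly. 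Your explicit flagging of the requirement $\bar q > d+2$ (needed so that $\bar q'<(d+2)/(d+1)$ and Proposition \ref{estFP2} applies) is a useful clarification, since the corollary's stated hypothesis alone guarantees it only for $\gamma\ge 3$.
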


A straightforward consequence of the corollary is that
\begin{equation}\label{estik4}
\|\rho\|_{L^{\bar p'}(Q_\tau)}\leq C\big( \|Du\|^{1-\delta}_{L^\infty(Q_\tau)} + 1\big), \qquad \text{for all} \quad \bar p >  \frac{d+2}{2(\gamma'-1) + 1}.
\end{equation}
Indeed, since $\bar{q}'<\frac{d+2}{d+1}$, Proposition \ref{embedding2} gives the result.

\begin{proof} Since $\bar q' < \frac{d+2}{d+1}$, \eqref{estFP3} applies (with $q = \bar q$), yielding by \eqref{H}
\begin{multline*}
\|\rho\|_{\mathcal{H}_{\bar q'}^1(Q_\tau)}\leq C\left(\iint_{Q_\tau} |D_pH(x,Du)|^{r'} \rho \, dxdt  + 1\right) \\
\leq C_1\left(\iint_{Q_\tau} |Du|^{(\gamma-1)r'} \rho\, dxdt  + 1\right)\\ \le C_1\left(\|Du\|^{1-\delta}_{L^\infty(Q_\tau)} \iint_{Q_\tau} |Du|^{(\gamma-1)r' - 1 + \delta} \rho\, dxdt  + 1\right),
\end{multline*}
with $r' = 1 + (d+2)\bar{q}^{-1}$. Note that $\delta > 0$ can be chosen small so that $(\gamma-1)r' - 1 + \delta \le \gamma$. One then uses the estimate \eqref{estik1} on $\iint |Du|^\gamma \rho$ (and Proposition \ref{stimeu}) to conclude.
\end{proof}

We are now ready to prove our main a priori Lipschitz regularity result.

\begin{proof}[Proof of Theorem \ref{apriori}] {\bf Step 1.} Set $z(x,t):=\frac{|Du(x,t)|^2}{2}$ on $Q_T$. Straightforward computations yield
\[
\partial_i z=Du\cdot D\partial_i u\ , \quad
\partial_{ij}z=D\partial_ju\cdot D\partial_iu+Du\cdot D\partial_{ij}u\ , \quad
\partial_tz=Du\cdot D(\partial_t u)\ ,
\]
which give
\begin{equation}\label{Trz}
\mathrm{Tr}(AD^2z)=\sum_{k=1}^d AD\partial_ku\cdot D\partial_ku+Du\cdot D\{\mathrm{Tr}(AD^2u)\}-\sum_{k=1}^d\partial_ku\mathrm{Tr}(\partial_kAD^2u)\ .
\end{equation}
Then, differentiating the HJ equation \eqref{hjb} with respect to $x_k$, multiplying the resulting equation by $u_{x_k}$, and summing for $k=1,\ldots,d$, one finds
\[
Du\cdot D(\partial_t u)-Du\cdot D\{\mathrm{Tr}(AD^2u)\}+D_pH\cdot Dz+D_xH\cdot Du=Df\cdot Du\ .
\]
Therefore, by plugging \eqref{Trz} into the previous equality we obtain the following equation satisfied by $z$
\begin{equation}\label{eqz}
\partial_tz-\mathrm{Tr}(AD^2z)+\sum_{k=1}^d AD\partial_ku\cdot D\partial_ku+D_pH\cdot Dz\\
=\sum_{k=1}^d\partial_ku\mathrm{Tr}(\partial_kAD^2u)-D_xH\cdot Du+Df\cdot Du\ .
\end{equation}
Using the uniform ellipticity condition \eqref{A1} we estimate the third term on the left-hand side by
\[
\sum_{k=1}^d AD\partial_ku\cdot D\partial_ku\geq \lambda \mathrm{Tr}((D^2u)^2).
\]
Multiply \eqref{eqz} by the adjoint variable $\rho$ and integrate by parts in space-time to get
\begin{multline}\label{ineqBern}
\int_{\T}z(x,\tau)\rho_{\tau}(x)\,dx+\lambda\iint_{Q_\tau}\mathrm{Tr}((D^2u)^2)\rho\,dxdt\leq \int_\T z(x,0)\rho(x,0)\,dxdt + \\ \iint_{Q_\tau} |D_xH||Du|\rho\,dxdt
+\iint_{Q_\tau}Df\cdot Du\rho\,dxdt+\iint_{Q_\tau} \partial_ku\mathrm{Tr}(\partial_kAD^2u)\rho\,dxdt.
\end{multline}
{\bf Step 2.} We proceed by estimating the four terms on the right hand side of \eqref{ineqBern}. First,
\begin{equation}\label{bc1}
\int_\T z(x,0)\rho(x,0)\,dxdt \le \frac12 \|D u(\cdot, 0)\|_{L^\infty(\T)}^2.
\end{equation}

Second, thanks to \eqref{H}, Proposition \ref{EstimateK} and Young's inequality,
\begin{equation}\label{bc2}
\iint_{Q_\tau} |D_xH||Du|\rho \leq \|Du\|_{L^\infty(Q_\tau)}\left[C_H \iint_{Q_\tau}|Du|^{\gamma}\rho\,dxdt+C_H \tau\right]
\leq C_2+\frac18\|Du\|_{L^\infty(Q_\tau)}^2.
\end{equation}

We then consider $\iint Df\cdot Du\rho$. Integrating by parts,
\begin{multline*}
\left|\iint_{Q_\tau} Df\cdot Du\rho\,dxdt\right|=\left|\iint_{Q_\tau} f\mathrm{div}(Du\rho)\,dxdt\right|\\
\leq \left|\iint_{Q_\tau} f Du\cdot D\rho\,dxdt\right|+\left|\iint_{Q_\tau} f \mathrm{Tr}(D^2u)\rho\,dxdt\right|=:I_1+I_2
\end{multline*}
The term $I_1$ can be controlled by means of H\"older's and Young's inequalities, and the control on $\|\rho\|_{\mathcal{H}_{\bar q'}^1}$ stated in Corollary \ref{EstimateK2}:
\begin{multline}\label{bc3}
I_1\leq \|Du\|_{L^\infty(Q_\tau)}\|f\|_{L^{\bar{q}}(Q_\tau)}\|D\rho\|_{L^{\bar{q}'}(Q_\tau)}\leq C\|Du\|_{L^\infty(Q_\tau)}\|f\|_{L^{\bar{q}}(Q_\tau)} \big( \|Du\|^{1-\delta}_{L^\infty(Q_\tau)} + 1\big)\\ \leq C_3+\frac1{16}\|Du\|_{L^\infty(Q_\tau)}^2. 
\end{multline}
We apply to $I_2$ also H\"older's and Young's inequalities to get, for a $\bar p > 1$ to be chosen,
\begin{multline*}
I_2 
\leq \frac{1}{2\lambda}\iint_{Q_\tau} f^2\rho\,dxdt+\frac{\lambda}{2}\iint_{Q_\tau}\mathrm{Tr}((D^2u)^2)\rho\,dxdt\\ \le \frac{1}{2\lambda} \|f\|^2_{L^{2\bar{p}}(Q_\tau)}\|\rho\|_{L^{\bar{p}'}(Q_\tau)}+\frac{\lambda}{2}\iint_{Q_\tau}\mathrm{Tr}((D^2u)^2)\rho\,dxdt.
\end{multline*}
Let us focus on the first term of the right-hand side of the above inequality: it can be bounded by \eqref{estik4} and $\|f\|_{L^{q}(Q_T)}$ whenever there exists $\bar p$ such that
\[
  \frac{2(d+2)}{2(\gamma'-1) + 1} < 2\bar p \le q.
\]
Such a $\bar p$ indeed exists, since $q > \min\left\{d+2, \frac{d+2}{2(\gamma'-1)}\right\}$. Therefore,
\begin{equation}\label{bc4}
I_2 
\leq C_3+\frac1{16}\|Du\|_{L^\infty(Q_\tau)}^2 +\frac{\lambda}{2}\iint_{Q_\tau}\mathrm{Tr}((D^2u)^2)\rho\,dxdt.
\end{equation}

For the last term $\iint u_{x_k}\mathrm{Tr}(A_{x_k}D^2u)\rho$, Cauchy-Schwartz and Young's inequalities yield
\[
\iint_{Q_\tau} u_{x_k}\mathrm{Tr}(A_{x_k}D^2u)\rho\,dxdt\leq C \|Da\|^2_{\infty}\iint_{Q_\tau}|Du|^2\rho\,dxdt+\frac{\lambda}{2}\iint_{Q_\tau}\mathrm{Tr}((D^2u)^2)\rho\,dxdt
\]
We distinguish two cases: if $\gamma \ge 2$, we have by \eqref{estik1} (with $\beta = 2$) that $\iint_{Q_\tau}|Du|^2\rho \le C$. Otherwise, if $1 < \gamma < 2$, \[\iint_{Q_\tau}|Du|^2\rho \le \|Du\|_{L^\infty(Q_\tau)}^{2-\gamma} \iint_{Q_\tau}|Du|^\gamma \rho\,dxdt \le C \|Du\|_{L^\infty(Q_\tau)}^{2-\gamma}. \]
In both cases we end up with
\begin{equation}\label{bc5}
\iint_{Q_\tau} \partial_ku\mathrm{Tr}(\partial_kAD^2u)\rho\,dxdt\leq  C_4+\frac1{8}\|Du\|_{L^\infty(Q_\tau)}^2 +\frac{\lambda}{2}\iint_{Q_\tau}\mathrm{Tr}((D^2u)^2)\rho\,dxdt.
\end{equation}

{\bf Step 3.} Plugging \eqref{bc1}, \eqref{bc2}, \eqref{bc3}, \eqref{bc4} and \eqref{bc5} into \eqref{ineqBern} we get
\[
\frac12\int_{\T}|Du(x,\tau)|^2\rho_{\tau}(x)\,dx = \int_{\T}z(x,\tau)\rho_{\tau}(x)\,dx\leq \frac12 \|D u(\cdot, 0)\|_{L^\infty(\T)}^2+C+\frac3{8}\|Du\|_{L^\infty(Q_\tau)}^2.
\]
Since this inequality holds for all smooth $\rho_\tau \ge 0$ with $\|\rho_\tau\|_{L^1(\T)} = 1$, we obtain
\[
\frac12\|D u(\cdot, \tau)\|^2_{L^\infty(\T)}\leq \frac12 \|D u(\cdot, 0)\|^2_{L^\infty(\T)}+C+\frac3{8}\|Du\|_{L^\infty(Q_\tau)}^2,
\]
and we conclude by passing to the supremum with respect to $\tau \in (0,T)$.
\end{proof}

\appendix
\section{Some auxiliary results}

\begin{lemma}\label{parabolicreg}
Let $p>1$, $f \in L^p(Q_T)$, and suppose that $a_{ij}\in C(Q_T)$ satisfies \eqref{A1}. Then, there exists a unique solution in $W^{2,1}_p(Q_T)$ to
\begin{equation*}
\begin{cases}
\partial_tu(x,t)-\sum_{i,j=1}^da_{ij}(x,t)\partial_{ij}u(x,t)=f(x,t)&\text{ in }Q_T\ ,\\
u(x,0)=0&\text{ in }\T\ .
\end{cases}
\end{equation*}
 Moreover, there exists a constant $C$ (depending on $\lambda$, $p$, and the modulus of continuity of $a$ on $Q_T$) such that
\begin{equation}\label{estex}
\|u\|_{W^{2,1}_p(Q_T)}\leq C\|f\|_{L^p(Q_T)}\ .
\end{equation}
\end{lemma}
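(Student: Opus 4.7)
The plan is to establish the estimate \eqref{estex} by a standard freezing-of-coefficients argument and then deduce existence by approximation; uniqueness is an immediate consequence of the estimate applied to the difference of two solutions.

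First I would treat the constant-coefficient baseline. When $a_{ij}\equiv A_{ij}$ is constant (with $A$ satisfying \eqref{A1}), maximal $L^p$ regularity for the heat-type operator $\partial_t - A_{ij}\partial_{ij}$ on the torus with zero initial datum is classical: using the Fourier representation of the associated semigroup on $\T$ and Mihlin-type multiplier estimates (equivalently, the singular integral theory of Calder\'on--Zygmund adapted to parabolic scaling, cf.\ \cite{LSU}), one gets
\[
\|\partial_t u\|_{L^p(Q_T)} + \|D^2 u\|_{L^p(Q_T)} \le C_0(\lambda, p)\, \|f\|_{L^p(Q_T)},
\]
and a Poincar\'e-type bound on the torus yields control of $\|u\|_{L^p}$ and $\|Du\|_{L^p}$ in terms of the higher-order quantities, so that the full $W^{2,1}_p$-norm is controlled. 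The constant here depends only on $\lambda$ and $p$, not on the specific choice of $A$.

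Next I would pass from constant to continuous coefficients via a standard freezing argument. Since $\overline{Q_T}$ is compact and $a_{ij}$ is uniformly continuous, for any $\eps>0$ there exists $\delta=\delta(\eps)>0$ such that $|a_{ij}(x,t)-a_{ij}(x_0,t_0)|\le\eps$ whenever $(x,t),(x_0,t_0)$ lie in a common parabolic cylinder of size $\delta$. Cover $\overline{Q_T}$ with finitely many such cylinders $\{C_k\}$ and pick a subordinate smooth partition of unity $\{\chi_k\}$. For each $k$, set $u_k = \chi_k u$ and write
\[
\partial_t u_k - A^{(k)}_{ij}\partial_{ij} u_k = \chi_k f + (a_{ij}-A^{(k)}_{ij})\chi_k \partial_{ij} u + R_k,
\]
where $A^{(k)}$ is the value of $a$ at the center of $C_k$ and $R_k$ collects the commutator terms from the cutoff, which involve only $u$ and $Du$ (multiplied by bounded quantities). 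Applying Step~1 to each piece yields
\[
\|u_k\|_{W^{2,1}_p} \le C_0\bigl(\|\chi_k f\|_{L^p} + \eps\|\chi_k D^2 u\|_{L^p} + \|R_k\|_{L^p}\bigr),
\]
and summing over $k$, choosing $\eps$ small enough that $C_0\eps \le \tfrac12$, the $D^2 u$ term is absorbed into the left-hand side. The remaining lower-order terms $\|R_k\|_{L^p}$ are controlled by $\|u\|_{W^{1,0}_p(Q_T)}$, which in turn is handled either by interpolation with a small multiplier of $\|D^2 u\|_{L^p}$ or by a Gronwall argument on cylinders $Q_{t}$ (using that $u(0)=0$), producing the desired a priori bound \eqref{estex}.

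For existence, I would approximate $f$ by smooth $f_n$ and $a_{ij}$ by smooth mollifications (respecting ellipticity) and solve the regularized problem by classical parabolic theory, obtaining smooth solutions $u_n$. The estimate \eqref{estex}, applied to $u_n$ with constants uniform in $n$ (since the modulus of continuity can be taken uniform along the approximation), gives a uniform $W^{2,1}_p(Q_T)$ bound; weak compactness then produces a limit $u$, and one verifies via the estimate that $u$ solves the equation with the original $f$. Uniqueness is immediate: if $u_1,u_2$ both solve with the same data, their difference solves the equation with $f\equiv0$ and zero initial datum, so \eqref{estex} forces $u_1=u_2$.

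The main obstacle is Step~2: organizing the freezing argument cleanly so that the commutator terms $R_k$ depend only on $u,Du$ (not on $D^2 u$), and verifying that the summed partition-of-unity estimates reassemble into the full $W^{2,1}_p$-norm of $u$ without losing track of the dependence of $C$ on the modulus of continuity of $a$. Everything else is routine given the constant-coefficient maximal regularity of the heat semigroup on $\T$.
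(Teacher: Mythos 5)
Your proposal is correct and follows essentially the same route the paper relies on: the paper proves this lemma by simply citing the classical maximal $L^p$-regularity results in Ladyzhenskaya--Solonnikov--Ural'tseva \cite{LSU} (with Pr\"uss--Schnaubelt \cite{PS} as an abstract alternative), and the freezing-of-coefficients argument you sketch --- constant-coefficient Calder\'on--Zygmund baseline, partition of unity with absorption of the $\eps\|D^2u\|_{L^p}$ term, interpolation for the lower-order commutators, approximation for existence and a priori bound for uniqueness --- is precisely the classical derivation underlying that reference. One small observation worth recording: on $\T$ with $u(0)=0$, each localized piece $u_k=\chi_k u$ also has zero initial trace, so the constant-coefficient estimate applies to $u_k$ without any further adjustment at $t=0$.
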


\begin{proof} This is a classical maximal $L^p$ regularity statement for uniformly elliptic equations with continuous coefficients, that can be deduced from results contained in \cite{LSU}; see \cite{CGM} for additional details. One can also rely on abstract results on maximal regularity for parabolic equations in \cite{PS}.
\end{proof}

The following continuous embedding result of $\mathcal{H}_\sigma^1(Q_T)$ into $L^p(Q_T)$ is rather known and can be found for example in \cite{CT}. However, we need its stability as $T \to 0$: this requires an additional control on the trace at some time (e.g. $t=0$). We provide a proof here for the reader's convenience.

\begin{prop}\label{embedding2}
If $1<\sigma<(d+2)/(d+1)$, then $\mathcal{H}_\sigma^1(Q_T)$ is continuously embedded into $L^p(Q_T)$ for
\[
\frac1p=\frac1\sigma -\frac1{d+2}.
\] 
Moreover, if $u \in \H_\sigma^1(Q_T)$ and $u(\cdot, 0) \in L^1(\T)$, we have
\begin{equation}\label{immL1}
\norm{u}_{L^p(Q_T)}\leq C\big(\norm{u}_{\H_\sigma^1(Q_T)}+\norm{u(0)}_{L^1(\T)}\big)\ ,
\end{equation}
where the constant $C$ depends on $d,p,\sigma,T$, but remains bounded for bounded values of $T$.
\end{prop}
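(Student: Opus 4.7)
My plan is to separate the argument into the plain embedding and the trace-controlled refinement. For the plain embedding $\H_\sigma^1(Q_T) \hookrightarrow L^p(Q_T)$, which is the classical parabolic Sobolev embedding already recorded in \cite{CT}, I would argue as follows: apply the spatial Sobolev embedding $W^{1,\sigma}(\T) \hookrightarrow L^{\sigma^\ast}(\T)$ (with $\sigma^\ast = d\sigma/(d-\sigma)$, valid since $\sigma < (d+2)/(d+1) \le d$ for $d\ge 2$) pointwise in time, combine it with the Lions--Magenes type embedding $\H_\sigma^1(Q_T) \hookrightarrow C([0,T]; L^\sigma(\T))$ coming from the evolution triple $W^{1,\sigma} \hookrightarrow L^\sigma \hookrightarrow W^{-1,\sigma}$, and interpolate by Gagliardo--Nirenberg
\[
\|u(t)\|_{L^p(\T)} \le C\, \|u(t)\|_{W^{1,\sigma}(\T)}^{\theta}\, \|u(t)\|_{L^\sigma(\T)}^{1-\theta}, \qquad \theta = \tfrac{d}{d+2}.
\]
Since $p\theta = d\sigma/(d+2-\sigma) \le \sigma$ (because $\sigma \le 2$), H\"older in time produces the parabolic exponent $p = \sigma(d+2)/(d+2-\sigma)$.

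For the refined bound \eqref{immL1}, the bare embedding alone is insufficient because its constant degenerates as $T \to 0^+$ (test on $u$ constant in $x$). My plan is to extend $u$ backward in time by the heat semigroup. Precisely, define $\tilde u$ on $\T \times (-1, T)$ by $\tilde u = u$ on $[0, T]$ and $\tilde u(\cdot, t) := S_{|t|} u(\cdot, 0)$ for $t \in [-1, 0]$, where $S_s = e^{s\Delta}$ is the heat semigroup on $\T$ (well-defined because $u(0) \in L^1$). The classical $L^1 \to L^q$ heat-kernel bound yields
\[
\|\nabla S_s v\|_{L^\sigma(\T)} \le C\, s^{-\frac{d(\sigma-1)+\sigma}{2\sigma}}\, \|v\|_{L^1(\T)},
\]
which, raised to the $\sigma$-th power and integrated over $s \in (0, 1)$, is finite precisely when $\sigma(d+1) < d+2$ --- exactly our hypothesis. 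A parallel control of $\partial_t \tilde u$ in $L^\sigma((-1,0); W^{-1,\sigma}(\T))$ follows from $\partial_t \tilde u = -\Delta \tilde u$ on $(-1,0)$, and matching at $t=0$ in the distributional sense is automatic since both pieces take the value $u(0)$. This yields
\[
\|\tilde u\|_{\H_\sigma^1(\T\times(-1,T))} \le C\big(\|u\|_{\H_\sigma^1(Q_T)} + \|u(0)\|_{L^1(\T)}\big).
\]
Applying the plain embedding (step 1) on the enlarged cylinder $\T \times (-1, T)$, whose time-length $1+T$ is bounded below by $1$ and above for $T$ in any bounded range, keeps the embedding constant bounded, and $\|u\|_{L^p(Q_T)} \le \|\tilde u\|_{L^p(\T \times (-1, T))}$ closes the estimate.

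The principal obstacle is the heat-kernel integrability computation on $(-1, 0)$: the hypothesis $\sigma < (d+2)/(d+1)$ is exactly the borderline condition making $\|\nabla \tilde u(\cdot, -s)\|_{L^\sigma(\T)}$ lie in $L^\sigma_s$ near the initial-time singularity $s = 0^+$, so the proof turns critically on matching this endpoint. Secondary technicalities --- the monotonicity of the bare-embedding constant in the time-length of the cylinder, and measurability/continuity of $\tilde u$ across $t = 0$ --- are routine given the smoothing nature of the heat semigroup, but one should be careful to verify that no boundary contribution appears when computing $\partial_t \tilde u$ as a distribution on the full interval $(-1, T)$.
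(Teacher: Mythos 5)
Your Step 2 idea — extending $u$ backward in time by the heat semigroup applied to $u(0)$, and observing that the heat‐kernel smoothing bound $\|\nabla S_s v\|_{L^\sigma}\le C\,s^{-\frac{d(\sigma-1)+\sigma}{2\sigma}}\|v\|_{L^1}$ is $L^\sigma$‐integrable near $s=0$ precisely when $\sigma<(d+2)/(d+1)$ — is clever and does pinpoint the role of the hypothesis. It provides a genuine alternative to the paper's device for $T$‐stability (the paper instead extends the dual forcing $f$ trivially to $\T\times(T,1)$ and uses $\varphi(T)=0$ to absorb the $T^{-1}\|\varphi\|_{L^{p'}}$ term from the LSU embedding constant).

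However, Step 1 has a genuine gap, and since Step 2 depends on Step 1, the whole proof collapses. The claimed Lions--Magenes type embedding $\H_\sigma^1(Q_T)\hookrightarrow C([0,T];L^\sigma(\T))$ is false for $\sigma<2$ (and here $\sigma<(d+2)/(d+1)<2$ always). For a Gelfand triple $W^{1,2}\hookrightarrow L^2\hookrightarrow W^{-1,2}$ the trace does land in $L^2$, but for $\sigma\ne 2$ the trace space of $u\in L^\sigma(W^{1,\sigma})$ with $\partial_t u\in L^\sigma(W^{-1,\sigma})$ is the real interpolation space $(W^{-1,\sigma},W^{1,\sigma})_{1-1/\sigma,\sigma}=B^{1-2/\sigma}_{\sigma,\sigma}(\T)$, which for $\sigma<2$ has strictly negative smoothness and is strictly larger than $L^\sigma$. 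A concrete counterexample on the torus is $u(x,t)=(t+|x|^2)^{-\beta}\chi(x)$ with cutoff $\chi$ near $0$: choosing $\tfrac{d}{2\sigma}\le\beta<\tfrac{d+2-\sigma}{2\sigma}$ (a nonempty interval precisely because $\sigma<2$) gives $u\in\H^1_\sigma(Q_T)$ but $u(\cdot,0)=|x|^{-2\beta}\chi\notin L^\sigma(\T)$. Without the uniform-in-time $L^\sigma$ bound, the Gagliardo--Nirenberg interpolation cannot be integrated in time as you propose, so the plain embedding is not established by your argument. The paper proves it instead by duality: it tests $u$ against the solution $\varphi$ of the backward heat equation with forcing $f\in L^{p'}(Q_T)$ and zero terminal data, uses maximal $W^{2,1}_{p'}$ regularity plus the LSU embeddings of $W^{2,1}_{p'}$ into $C(\overline{Q_T})$ and $W^{1,0}_{\sigma'}$, then integrates by parts; this route both proves the embedding and naturally produces the trace term $\|u(0)\|_{L^1}$ in \eqref{immL1}. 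If you replace your Step 1 by that duality argument (or by a correct proof of the plain embedding), your Step 2 extension would then close a valid alternative proof of the $T$‐stable estimate.
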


\begin{proof} Let $f \in L^{p'}(Q_T)$ and $\varphi$ be the solution to 
\begin{equation*}
\begin{cases}
-\partial_t\varphi(x,t)-\Delta\varphi(x,t)=f(x,t)&\text{ in }Q_T\ ,\\
\varphi(x,T)=0&\text{ in }\T\ .
\end{cases}
\end{equation*}
By Lemma \ref{parabolicreg}, $\varphi$ satisfies 
\begin{equation}\label{ugabuga}
\|\varphi\|_{W^{2,1}_{p'}(Q_T)}\leq C\|f\|_{L^{p'}(Q_T)}\ .
\end{equation}
Note that $C$ here may depend on $T$, but it is the same for all $T \le 1$ (if $T < 1$, it is sufficient to extend trivially $f$ on $\T \times (T,1)$ and use \eqref{estex} on $\T \times (0,1)$). Note that $(d+2)/2 < p' < d+2$. Therefore, by the embedding results in \cite[Lemma II.3.3]{LSU},
\begin{equation}\label{embe99}
\|\varphi\|_{C(Q_T)}\leq C \|\varphi\|_{W^{2,1}_{p'}(Q_T)}, \qquad \|\varphi\|_{W^{1,0}_{\sigma'}(Q_T)}\leq C \|\varphi\|_{W^{2,1}_{p'}(Q_T)}
\end{equation}
Note that a straightforward application of \cite[Lemma II.3.3]{LSU} yields bounded constants in \eqref{embe99} as $T\to 0$, plus an additional term on the right-hand sides of the form $C_1 T^{-1} \|\varphi\|_{L^{p'}(Q_T)}$; this term can be removed using the fact that $\varphi(T)=0$, that guarantees $\|\varphi\|_{L^{p'}(Q_T)} \le T \|\partial_t \varphi\|_{L^{p'}(Q_T)} \le T \|\varphi\|_{W^{2,1}_{p'}(Q_T)}$. Note also that here we can identify norms on $\T$ with norms on $\Omega = (0,1)^d$.

Therefore, integrating by parts in time and using \eqref{ugabuga} and \eqref{embe99},
\begin{multline*}
\left| \iint_{Q_T} uf \, dxdt\right| = \left| \iint_{Q_T} u(-\partial_t\varphi-\Delta\varphi) \, dxdt\right| \\ \le \int_{\T}|\varphi(x,0)u(x,0)|dx + \left| \iint_{Q_T}\partial_t u\, \varphi \, dxdt  \right|+ \iint_{Q_T}|D\varphi|\,|Du|\,dxdt \\
\le C\Big(\|\varphi(0)\|_{L^\infty(\T)} \|u(0)\|_{L^1(\T)} + \|\partial_t u\|_{\big(W^{1,0}_{\sigma'}(Q_T)\big)'} \|\varphi\|_{W^{1,0}_{\sigma'}(Q_T)} + \|D u\|_{L^{\sigma}(Q_T)} \|D \varphi\|_{L^{\sigma'}(Q_T)}\Big) \\
\le C\Big(\|u(0)\|_{L^1(\T)} + \|\partial_t u\|_{\big(W^{1,0}_{\sigma'}(Q_T)\big)'} + \|D u\|_{L^{\sigma}(Q_T)}\Big)\|f\|_{L^{p'}(Q_T)},
\end{multline*}
yielding the desired result.

\end{proof}

\small

\medskip
\begin{flushright}
\noindent \verb"marcoalessandro.cirant@unipr.it"\\
Dipartimento di Scienze Matematiche Fisiche e Informatiche\\ Universit\`a di Parma\\ Parco Area delle Scienze 53/a, 43124 Parma (Italy)

\noindent \verb"alessandro.goffi@gssi.it"\\
Gran Sasso Science Institute\\
viale Francesco Crispi 7, 67100 L'Aquila (Italy)
\end{flushright}


\begin{thebibliography}{10}

\bibitem{AC}
H.~Amann and M.~G. Crandall.
\newblock On some existence theorems for semi-linear elliptic equations.
\newblock {\em Indiana Univ. Math. J.}, 27(5):779--790, 1978.

\bibitem{ArmstrongTran}
S.~Armstrong and H.~V. Tran.
\newblock Viscosity solutions of general viscous {H}amilton-{J}acobi equations.
\newblock {\em Math. Ann.}, 361(3-4):647--687, 2015.

\bibitem{AS}
D.~G. Aronson and J.~Serrin.
\newblock Local behavior of solutions of quasilinear parabolic equations.
\newblock {\em Arch. Rational Mech. Anal.}, 25:81--122, 1967.

\bibitem{BarlesDIeq}
G.~Barles.
\newblock Interior gradient bounds for the mean curvature equation by viscosity
  solutions methods.
\newblock {\em Differential Integral Equations}, 4(2):263--275, 1991.

\bibitem{BSouga}
G.~Barles and P.~E. Souganidis.
\newblock On the large time behavior of solutions of {H}amilton-{J}acobi
  equations.
\newblock {\em SIAM J. Math. Anal.}, 31(4):925--939, 2000.

\bibitem{BaSW}
M.~Ben-Artzi, P.~Souplet, and F.~B. Weissler.
\newblock The local theory for viscous {H}amilton-{J}acobi equations in
  {L}ebesgue spaces.
\newblock {\em J. Math. Pures Appl. (9)}, 81(4):343--378, 2002.

\bibitem{Bernstein}
S.~Bernstein.
\newblock Sur la g\'{e}n\'{e}ralisation du probl\`eme de {D}irichlet.
\newblock {\em Math. Ann.}, 69(1):82--136, 1910.

\bibitem{BCCS}
S.~Bianchini, M.~Colombo, G.~Crippa, and L.~V. Spinolo.
\newblock Optimality of integrability estimates for advection-diffusion
  equations.
\newblock {\em NoDEA Nonlinear Differential Equations Appl.}, 24(4):Art. 33,
  19, 2017.

\bibitem{BOP}
L.~Boccardo, L.~Orsina, and A.~Porretta.
\newblock Some noncoercive parabolic equations with lower order terms in
  divergence form.
\newblock {\em J. Evol. Equ.}, 3(3):407--418, 2003.

\bibitem{CS}
P.~Cannarsa and C.~Sinestrari.
\newblock {\em Semiconcave functions, {H}amilton-{J}acobi equations, and
  optimal control}, volume~58 of {\em Progress in Nonlinear Differential
  Equations and their Applications}.
\newblock Birkh\"auser Boston, Inc., Boston, MA, 2004.

\bibitem{CGPT}
P.~Cardaliaguet, P.~J. Graber, A.~Porretta, and D.~Tonon.
\newblock Second order mean field games with degenerate diffusion and local
  coupling.
\newblock {\em NoDEA Nonlinear Differential Equations Appl.}, 22(5):1287--1317,
  2015.

\bibitem{CPT}
P.~Cardaliaguet, A.~Porretta, and D.~Tonon.
\newblock Sobolev regularity for the first order {H}amilton-{J}acobi equation.
\newblock {\em Calc. Var. Partial Differential Equations}, 54(3):3037--3065,
  2015.

\bibitem{CSil}
P.~Cardaliaguet and L.~Silvestre.
\newblock H\"{o}lder continuity to {H}amilton-{J}acobi equations with
  superquadratic growth in the gradient and unbounded right-hand side.
\newblock {\em Comm. Partial Differential Equations}, 37(9):1668--1688, 2012.

\bibitem{CGM}
M.~Cirant, R.~Gianni, and P.~Mannucci.
\newblock Short time existence for a general backward-forward parabolic system
  arising from mean-field games.
\newblock arXiv:1806.08138, to appear on Dynamic Games and Applications, 2018.

\bibitem{CiGo}
M.~Cirant and A.~Goffi.
\newblock On the existence and uniqueness of solutions to time-dependent
  fractional {MFG}.
\newblock {\em SIAM J. Math. Anal.}, 51(2):913--954, 2019.

\bibitem{CT}
M.~Cirant and D.~Tonon.
\newblock Time-dependent focusing mean-field games: the sub-critical case.
\newblock {\em J. Dynam. Differential Equations}, 31(1):49--79, 2019.

\bibitem{CLS}
M.~G. Crandall, P.-L. Lions, and P.~E. Souganidis.
\newblock Maximal solutions and universal bounds for some partial differential
  equations of evolution.
\newblock {\em Arch. Rational Mech. Anal.}, 105(2):163--190, 1989.

\bibitem{DL}
R.~Dautray and J.-L. Lions.
\newblock {\em Mathematical analysis and numerical methods for science and
  technology. {V}ol. 5}.
\newblock Springer-Verlag, Berlin, 1992.
\newblock Evolution problems. I, Translated from the French by Alan Craig.

\bibitem{Davini}
A.~Davini.
\newblock Existence and uniqueness of solutions to parabolic equations with
  superlinear {H}amiltonians.
\newblock {\em Commun. Contemp. Math.}, 21(1):1750098, 25, 2019.

\bibitem{DirrNguyen}
N.~Dirr and V.~Nguyen.
\newblock Some new results on lipschitz regularization for parabolic equations.
\newblock {\em J. Evol. Equ.}, 2019.

\bibitem{Ecc}
L.~C. Evans.
\newblock Adjoint and compensated compactness methods for {H}amilton-{J}acobi
  {PDE}.
\newblock {\em Arch. Ration. Mech. Anal.}, 197(3):1053--1088, 2010.

\bibitem{TesiAle}
A.~Goffi.
\newblock {\em {T}opics in nonlinear PDEs: from Mean Field Games to problems
  modeled on {H}\"ormander vector fields}.
\newblock PhD thesis, Gran Sasso Science Institute, 2019.

\bibitem{GomesStocEA}
D.~Gomes and H.~S\'anchez~Morgado.
\newblock A stochastic {E}vans-{A}ronsson problem.
\newblock {\em Trans. Amer. Math. Soc.}, 366(2):903--929, 2014.

\bibitem{GPSMsup}
D.~A. Gomes, E.~Pimentel, and H.~S\'anchez-Morgado.
\newblock Time-dependent mean-field games in the superquadratic case.
\newblock {\em ESAIM Control Optim. Calc. Var.}, 22(2):562--580, 2016.

\bibitem{GPSMsub}
D.~A. Gomes, E.~A. Pimentel, and H.~S\'{a}nchez-Morgado.
\newblock Time-dependent mean-field games in the subquadratic case.
\newblock {\em Comm. Partial Differential Equations}, 40(1):40--76, 2015.

\bibitem{gomesbook}
D.~A. Gomes, E.~A. Pimentel, and V.~Voskanyan.
\newblock {\em Regularity theory for mean-field game systems}.
\newblock SpringerBriefs in Mathematics. Springer, [Cham], 2016.

\bibitem{IshiiLions}
H.~Ishii and P.-L. Lions.
\newblock Viscosity solutions of fully nonlinear second-order elliptic partial
  differential equations.
\newblock {\em J. Differential Equations}, 83(1):26--78, 1990.

\bibitem{LSU}
O.~A. Ladyzenskaja, V.~A. Solonnikov, and N.~N. Ural'ceva.
\newblock {\em Linear and quasilinear equations of parabolic type}.
\newblock Translated from the Russian by S. Smith. Translations of Mathematical
  Monographs, Vol. 23. American Mathematical Society, Providence, R.I., 1968.

\bibitem{LM17}
T.~Leonori and M.~Magliocca.
\newblock Comparison results for unbounded solutions for a parabolic
  cauchy-dirichlet problem with superlinear gradient growth.
\newblock {\em Commun. Pure Appl. Anal.}, 18(6):2923--2960, 2019.

\bibitem{LeyNguyen}
O.~Ley and V.~Nguyen.
\newblock Lipschitz regularity results for nonlinear strictly elliptic
  equations and applications.
\newblock {\em J. Differential Equations}, 263(7):4324--4354, 2017.

\bibitem{Lions85}
P.-L. Lions.
\newblock Quelques remarques sur les probl\`emes elliptiques quasilin\'{e}aires
  du second ordre.
\newblock {\em J. Analyse Math.}, 45:234--254, 1985.

\bibitem{MPR}
G.~Metafune, D.~Pallara, and A.~Rhandi.
\newblock Global properties of transition probabilities of singular diffusions.
\newblock {\em Teor. Veroyatn. Primen.}, 54(1):116--148, 2009.

\bibitem{Por15}
A.~Porretta.
\newblock Weak solutions to {F}okker-{P}lanck equations and mean field games.
\newblock {\em Arch. Ration. Mech. Anal.}, 216(1):1--62, 2015.

\bibitem{PP}
A.~Porretta and E.~Priola.
\newblock Global {L}ipschitz regularizing effects for linear and nonlinear
  parabolic equations.
\newblock {\em J. Math. Pures Appl. (9)}, 100(5):633--686, 2013.

\bibitem{PSou}
A.~Porretta and P.~Souplet.
\newblock The profile of boundary gradient blowup for the diffusive
  {H}amilton-{J}acobi equation.
\newblock {\em Int. Math. Res. Not. IMRN}, (17):5260--5301, 2017.

\bibitem{PS}
J.~Pr\"{u}ss and R.~Schnaubelt.
\newblock Solvability and maximal regularity of parabolic evolution equations
  with coefficients continuous in time.
\newblock {\em J. Math. Anal. Appl.}, 256(2):405--430, 2001.

\bibitem{Serrin}
J.~Serrin.
\newblock Gradient estimates for solutions of nonlinear elliptic and parabolic
  equations.
\newblock In {\em Contributions to nonlinear functional analysis ({P}roc.
  {S}ympos., {M}ath. {R}es. {C}enter, {U}niv. {W}isconsin, {M}adison, {W}is.,
  1971)}, pages 565--601. Academic Press, New York, 1971.

\bibitem{Z}
W.~P. Ziemer.
\newblock {\em Weakly differentiable functions}, volume 120 of {\em Graduate
  Texts in Mathematics}.
\newblock Springer-Verlag, New York, 1989.
\newblock Sobolev spaces and functions of bounded variation.

\end{thebibliography}
\end{document}